\def\eqalign#1{\,\vcenter{\openup\jot\m@th
  \ialign{\strut\hfil$\displaystyle{##}$&$\displaystyle{{}##}$\hfil
      \crcr#1\crcr}}\,}
\def\eqalignno#1{\displ@y \tabskip\@centering
  \halign to\displaywidth{\hfil$\displaystyle{##}$\tabskip\z@skip
    &$\displaystyle{{}##}$\hfil\tabskip\@centering
    &\llap{$##$}\tabskip\z@skip\crcr
    #1\crcr}}
\def\leqalignno#1{\displ@y \tabskip\@centering
  \halign to\displaywidth{\hfil$\displaystyle{##}$\tabskip\z@skip
    &$\displaystyle{{}##}$\hfil\tabskip\@centering
    &\kern-\displaywidth\rlap{$##$}\tabskip\displaywidth\crcr
    #1\crcr}}
\newdimen\pixel \pixel=.00333333 true in
\definecolor{light-gray}{gray}{0.3}
\def\bigpar{\bigbreak\@afterindentfalse\@afterheading\ignorespaces}
\def\medpar{\medbreak\@afterindentfalse\@afterheading\ignorespaces}
\def\smallpar{\smallbreak\@afterindentfalse\@afterheading\ignorespaces}
\newlength{\saveindent}
\newenvironment{proof}%
      {\bigpar{\bf Proof:}\ 
             \setlength{\saveindent}{\parindent} 
                       \ignorespaces}%
      {\stopproof\ignorespaces\bigbreak \setlength{\parindent}{\saveindent}}
\bigbreak \setlength{\parindent}{\saveindent}}
\bigbreak \setlength{\parindent}{\saveindent}}
\newenvironment{proofof}[1]%
      {\bigpar{\bf#1:}\ %
             \setlength{\saveindent}{\parindent} 
                       \ignorespaces}%
      {\stopproof\ignorespaces\bigbreak \setlength{\parindent}{\saveindent}}
\newenvironment{remark}%
      {\smallpar{\bf Remark:}\ 
                       \ignorespaces}%
      {\stopproof\ignorespaces\medbreak \setlength{\parindent}{\saveindent}}
\newenvironment{remark*}%
      {\smallpar{\bf Remark:}\ 
                       \ignorespaces}%
      {\ignorespaces\medbreak \setlength{\parindent}{\saveindent}}
\medbreak \setlength{\parindent}{\saveindent}}
\newenvironment{remarks*}%
      {\smallpar{\bf Remarks:}\ 
                       \ignorespaces}%
      {\ignorespaces\medbreak \setlength{\parindent}{\saveindent}}
\medbreak \setlength{\parindent}{\saveindent}}
\medbreak \setlength{\parindent}{\saveindent}}
\newtheorem{alemma}{Lemma}
\newtheorem{theorem}{Theorem}
\newtheorem{lemma}[theorem]{Lemma}
\newtheorem{proposition}[theorem]{Proposition}
\newtheorem{corollary}[theorem]{Corollary}
\newtheorem{example}{Example}
\def\begex{\begin{example}\parindent=0pt \rm}
\def\endex{\end{example}}
\def\square{\vbox{\hrule height.2pt\hbox{\vrule width.2pt height5pt \kern5pt
                                   \vrule width.2pt} \hrule height.2pt}}
\def\stopproof{\hfill \square \smallskip}
\def \zplus {{z_R}}
\def \zminus {{z_L}}
\def \pih {{\widehat \pi}}
\def \th {{\widehat T}}
\def \ttil {{\widetilde T}}
\def \config {{\cal C}}
\def \fin {{{\rm final}}}
\def \lhat {{ \widehat L}}
\def \aux {{primary }}
\def \parity {{\rm parity}}
\def \tmt {{t \wedge T}}
\def \gt {{ \widetilde G}}
\def \vt {{ \widetilde V}}
\def\fif {{fifteen puzzle}}
\def \Pone {{ P_{x_1}}}
\def \fone {{ f_{x_1}}}
\def\sfrac#1#2{{\textstyle{#1 \over #2}}}
\def\bigo{{\rm O}}
\def\littleo{{\rm o}}
\def \tmix {T_{\rm mix}}
\def\half{{\textstyle{1\over2}}}
\def\quarter{{\textstyle{1\over4}}}
\def \ff {{f_{\rm final}}}
\def \fhat {{\widehat f}}
\def \fhf {{\fhat_{\rm final}}}
\def \wdist {{W_{\rm dist}}}
\def \hhat {{\widehat H}}
\def \yhat {{\widehat Y}}
\def \ahat {{\widehat R_s}}
\def \bhat {{\widehat L_s}}
\def \r {{\bf R}}
\def \var {{ \rm var }}
\def \r {{ \cal F}}
\def \d {\partial}
\def\r|{{\Bigr\vert}}
\def\l|{{\Bigl\vert}}
\def \R {{\bf R}}
\def\phi {\Phi}
\def\e{\epsilon}
\def \cov {{\rm cov}}
\def \sd {{\rm sd}}
\def \ttil {{\mathcal S}}
\def \pt {{\tilde p}}
\def \at {{\tilde \alpha}}
\def \Pt {{\tilde P}}
\def \pit {{\widetilde \pi}}
\def \ft {{\tilde f}}
\def \alphat {{\tilde \alpha}}
\def\config {\cal C}
\def\tmix{\tau_{\rm mix}}
\def\varepsilon{\mathchar"122 }
\def \one {{\mathbf 1}}
\def \chi {{\mathbf 1}}
\def\Gamma {{}}
\def\p {{ \mathbb P}}
\def\P {{ \mathbb P}}
\def\ph {{{\widehat V_n}}}
\def\e {{ \mathbb {E}}}
\def \phit {{\widetilde \phi}}
\def \tb {{ T_B}}
\def \tc {{ T_C}}
\def\Sq{{\cal S}_q}
\def\Sq-{{\cal S}_{q-1}}
\def \et { {\tilde  {\cal E}}}
\def \taut {{ \tau}}
\def \nt {{N}}
\def \gstar {{G^*}}
\def\xhat{\widehat{X}}
\def \z {{\bf Z}}
\def \Z {{\bf Z}}
\def \sm {{\setminus}}
\def\pim{\pi_*}
\def\f {{\cal F}}
\def\h {{\cal H}}
\def\hh {{\widehat {\cal H}}}
\def\given {{\,|\,}}
\def\Given {{\,\Bigl|\,}}
\def\one{{\mathbf 1}}
\def\ent{{ \rm \sc ENT}}
\def\entpi {{\ent_{\pi}}}
\def\entpit {{\ent_{\pit}}}
\def \zhat {{ \widehat Z}}
\def \ore {{$\Omega$-restricted }}
\def \bgb {{BGB }}
\def \ct {{ \widetilde C }}
\newcommand{\lab}{\label}
\newcommand{\be}{\begin{eqnarray}}
\newcommand{\ee}{\end{eqnarray}}
\newcommand{\eps}{{\cal E}}
\def \eone {{ \eps_{x_1}}}
\begin{document}
\title{The mixing time of the \fif}
\author{
{\sc Ben Morris}\thanks{Department of Mathematics,
University of California, Davis.
Email:
{\tt morris@math.ucdavis.edu}.
Research partially supported by
NSF grant DMS-1007739.}
\and
{\sc Anastasia Raymer}\thanks{Department of Mathematics,
Cornell University.
Email:
{\tt araymer@math.cornell.edu}.
}  }
\date{}
\maketitle
\begin{abstract}
\noindent
We show that there are 
universal positive constants $c$ and $C$ such 
the mixing time $\tmix$ for the fifteen puzzle in an 
$n \times n$ torus satisfies $c n^4 \log n \leq \tmix \leq C n^4 \log^2 n$.
\end{abstract}

\setcounter{page}{1}


\section{Introduction} 
The fifteen puzzle, often credited to Sam Loyd, 
was a craze in 1880. The game consists of a 
$4 \times 4$ grid with fifteen tiles, labeled 1,2,$\dots$, 15, 
and an empty space (the ``hole''). In a move, the player pushes
a tile into the hole. The tiles start in ``mixed up'' order and the 
goal is to sort the tiles and move the hole to the lower right 
corner, as shown in Figure \ref{figa}. 
There are also $3 \times 3$ 
and $2 \times 4$ versions of the game. 
\begin{figure}[t]
\centering
\begin{tikzpicture} 
  \foreach \y in {0,2}
  \draw [fill = white] (\y, 0) rectangle (\y + 1, 1);
  \foreach \y in {1,3}
  \draw [fill = white] (\y, 1) rectangle (\y + 1, 2);
  \foreach \y in {1}
  \draw [fill = white] (\y, 0) rectangle (\y + 1, 1);
  \foreach \y in {3}
  \draw [fill = white] (\y, 0) rectangle (\y + 1, 1);
  \foreach \y in {1}
  \draw [fill = white] (\y, 2) rectangle (\y + 1, 3);
  \foreach \y in {3}
  \draw [fill = gray] (\y, 2) rectangle (\y + 1, 3);
  \foreach \y in {1,3}
  \draw [fill = white] (\y, 3) rectangle (\y + 1, 4);
  \foreach \y in {0,2}
  \draw [fill = white] (\y, 1) rectangle (\y + 1, 2);
  \foreach \y in {0,2}
  \draw [fill = white] (\y, 2) rectangle (\y + 1, 3);
  \foreach \y in {0,2}
  \draw [fill = white] (\y, 3) rectangle (\y + 1, 4);

\node at (.5,1.5) {14};
\node at (1.5,1.5) {2};
\node at (2.5,1.5) {7};
\node at (3.5,1.5) {6};

\node at (.5,.5) {15};
\node at (1.5,.5) {13};
\node at (2.5,.5) {12};
\node at (3.5,.5) {5};

\node at (.5,2.5) {11};
\node at (1.5,2.5) {10};
\node at (2.5,2.5) {1};

\node at (.5,3.5) {4};
\node at (1.5,3.5) {9};
\node at (2.5,3.5) {8};
\node at (3.5,3.5) {3};

\node at (2, -.5) {Start};

\def\o{8}

  \foreach \y in {0,2}
  \draw [fill = white] (\y + \o, 0) rectangle (\y + \o + 1, 1);
  \foreach \y in {1,3}
  \draw [fill = white] (\y + \o, 1) rectangle (\y + \o + 1, 2);
  \foreach \y in {1}
  \draw [fill = white] (\y + \o, 0) rectangle (\y + \o + 1, 1);
  \foreach \y in {3}
  \draw [fill = gray] (\y + \o, 0) rectangle (\y + \o + 1, 1);
  \foreach \y in {1}
  \draw [fill = white] (\y + \o, 2) rectangle (\y + \o + 1, 3);
  \foreach \y in {3}
  \draw [fill = white] (\y + \o, 2) rectangle (\y + \o + 1, 3);
  \foreach \y in {1,3}
  \draw [fill = white] (\y + \o, 3) rectangle (\y + \o + 1, 4);
  \foreach \y in {0,2}
  \draw [fill = white] (\y + \o, 1) rectangle (\y + \o + 1, 2);
  \foreach \y in {0,2}
  \draw [fill = white] (\y + \o, 2) rectangle (\y + \o + 1, 3);
  \foreach \y in {0,2}
  \draw [fill = white] (\y + \o, 3) rectangle (\y + \o + 1, 4);

\node at (\o +.5,1.5) {9};
\node at (\o +1.5,1.5) {10};
\node at (\o +2.5,1.5) {11};
\node at (\o +3.5,1.5) {12};

\node at (\o +.5,.5) {13};
\node at (\o +1.5,.5) {14};
\node at (\o +2.5,.5) {15};

\node at (\o +.5,2.5) {5};
\node at (\o +1.5,2.5) {6};
\node at (\o +2.5,2.5) {7};
\node at (\o +3.5,2.5) {8};

\node at (\o +.5,3.5) {1};
\node at (\o +1.5,3.5) {2};
\node at (\o +2.5,3.5) {3};
\node at (\o +3.5,3.5) {4};

\node at (\o +2, -.5) {End};
\end{tikzpicture}
\caption{}
\label{figa}
\end{figure}
In this paper we study the problem,
posed by Diaconis \cite{diaconis}, of finding the  {\it mixing time}
of the fifteen puzzle: starting from a solved 
game, how many steps are required to ``mix up'' the tiles again,
if at each step we choose a move uniformly at random?
(See Section \ref{ls} for a precise definition of the mixing time). 

We can define the fifteen puzzle on any finite graph 
$G$ as follows. In a configuration, 
the tiles and hole occupy the vertices of $G$. In a move, the hole 
is interchanged with a tile in an adjacent vertex. If $G$ 
is bipartite, then there are some configurations that are not reachable
from a given starting state. To see this, suppose that $G$ is bipartite, so 
that we can define a parity for each vertex in $G$. 
If we view configurations as permutations $\pi$ on the vertex set of $G$,
and define
\be
\label{odef}
\Omega = \{\pi: \parity(\pi) = \parity(\mbox{hole})\};
\ee
\be
\label{ocdef}
\Omega^c = \{\pi: \parity(\pi) \neq \parity(\mbox{hole})\};
\ee
then it is impossible to transition between $\Omega$ and $\Omega^c$, using 
a legal move. Suppose that the game is started in a configuration 
in $\Omega$. We say the game is {\it solvable} if every configuration 
in $\Omega$ is reachable by legal moves. 
If $G$ is not bipartite, 
we say the game is solvable if every configuration 
is reachable by legal moves. 
The fifteen puzzle is known to be solvable on most graphs (see \cite{wil}); 
in particular, it is solvable on an $m \times n$ grid 
provided that $m$ and $n$ are both at least $2$ (see \cite{js}).

In the present paper, we analyze the fifteen puzzle in the 
$n \times n$ torus $G_n := \z_n^2$. We consider the Markov
chain, which we call the {\it Loyd process,}
defined by the following transition rule:
\begin{enumerate}
\item with probability $\half$, do nothing; else
\item choose 
a uniform random move
and make it. 
\end{enumerate} 
(We have added a holding probability of $\half$ to avoid periodicity.)
The Loyd process is related to the
{\it interchange
process} on $G_n$, which is defined as follows. 
In a configuration, each vertex in $G_n$ 
is occupied by a particle. At each step, choose 
a pair of neighboring particles 
uniformly at random and then 
interchange them.
Yau \cite{yau} famously showed that the log Sobolev constant
(see Section \ref{ls} for 
a precise definition) 
for 
the interchange process is on the order of $n^{-4}$, which implies
that the mixing time is $\bigo(n^4 \log n)$, and there is a matching lower 
bound \cite{ex}.
The Loyd process can be viewed as a variant of the 
interchange process, where there is a special particle
(the hole) that is conditioned to be involved in each step.

Our main result is to determine the mixing time 
of the Loyd process to within
a factor of $\log n$. We show that there are universal constants
$c > 0$ and $C>0$ such that the mixing time $\tmix$ 
for the Loyd process in $G_n$ satisfies
\[
c n^4 \log n \leq \tmix \leq C n^4 \log^2 n.
\]
For the upper bound, we use the comparison techniques for random
walks on groups developed in \cite{ct}, which allow us to bound the 
log Sobolev constant for the Loyd process using known bounds for 
shuffling by random transpositions. 
A 
difficulty that arises here is that $G_n$ is bipartite when 
$n$ is even, which implies that there is a restricted state space. 
To handle this, we develop a method to compare log Sobolev constants across 
different state spaces. To compare our chain with shuffling by 
random transpositions, we introduce three intermediate chains
and then make a total of four comparisons. 

For the lower bound, we use a variation on Wilson's method
\cite{wilson}. Wilson's method is useful 
when the Markov chain can be described 
as a system with a large number of particles where 
the motion of each individual particle is itself a Markov chain.
(In the Loyd process the movement of a single tile 
is {\it not} a Markov chain; however, we can get around this by considering 
the process only at times when the hole is to its immediate right.) 
In Wilson's method, one often analyzes a distinguishing statistic
of the form 
\[
\sum_p f(\mbox{position of particle $p$}),
\] 
where the sum is over  a 
certain set of particles, and $f$ is an eigenfunction for the motion 
of a single particle. In a typical application of Wilson's method,
only a bounded number of particles are involved in each move, and
hence the distinguishing statistic is slowly decaying. 
However, in the Loyd process, each move of the hole affects the 
distribution of the final position of each tile, which 
makes the ``Wilson statistic'' hard to analyze. Fortunately,
by making use of some surprising cancellations we are able 
to prove a lower bound of the expected form $c n^4 \log n$. 

\section{Mixing time, log Sobolev constant and the 
harmonic extension}
\label{ls}
Let 
$(X_0, X_1, \dots)$ be an 
irreducible, aperiodic Markov chain 
on a finite state space $S$
with transition probabilities 
$p(x,y)$, and suppose that the stationary distribution
$\pi$ is uniform over $S$.
For probability measures $\mu$ and $\nu$ on $S$,
define the {\it total variation distance}
$|| \mu - \nu || = \half \sum_{x \in S} |\mu(x) - \nu(x) |$, 
and 
define the {\it $\epsilon$-mixing time}
\be
\label{mixingtime}
\tmix(\epsilon) = \min \{t: || p^t(x, \, \cdot) - \pi || \leq \epsilon 
\mbox{ for all $x \in S$}\} \,.
\ee
\noindent
The  {\it mixing time}
is $\tmix = \tmix(e^{-1})$.

For $f: S \to \R$ define 
\[
\e_\pi(f) = \sum_{x \in S} f(x) \pi(x),
\]
and
\[
\ent_\pi(f) = \e_\pi \left( f \log{f \over \e_\pi(f)} \right),
\]
and define 
the Dirichlet form 
\[
\eps( f, f) = \half \sum_{x,y \in S}  \pi(x) p(x,y) 
\left( f(x) - f(y) \right)^2.
\]
The log-Sobolev constant is defined 
by 
\[
\alpha = \min_{f: \ent_\pi(f^2) \neq 0} { \eps(f, f) \over \ent_\pi( f^2) }.
\]
The mixing time is related to the log Sobolev constant 
via the following inequality \cite{ls}:
\begin{equation}
\label{logsob}
\tmix \leq {4 + \log \log |S| \over 4 \alpha}.
\end{equation}

For $S' \subset S$,  
let $\tau_1 < \tau_2 < \cdots$ be the times when the chain is in $S'$.
The {\it restriction} of the Markov chain to $S'$ 
is the new Markov chain
$(X_{\tau_1}, X_{\tau_2}, \dots)$. 
For $f: S' \to \R$, the {\it harmonic extension 
of $f$ to $S$} is the function $\ft$ that agrees with $f$ on $S'$ 
and is harmonic on $S \setminus S'$, which can be defined by
\[
\ft(x) 
 = \left\{\begin{array}{ll}      
  f(x)          & \mbox{if $x \in S'$;} \\
\e_x(f(X_{T_{S'}}))           & \mbox{otherwise,} \\
\end{array}
\right.
\]
where $\e_x\Bigl(\, \cdot \, \Bigr) := 
\e\Bigl(\, \cdot \, \Given X_0 = x \Bigr)$ and  
$T_{S'} = \min \{t \geq 0: X_t \in S'\}$ is the hitting time of $S'$.

\section{Random walks on groups and comparison techniques}
Let $G$ be a finite group and let $p$ be a probability measure 
supported on a set of generators of $G$. The random walk on $G$ 
driven by $p$ is the Markov chain with the following transition rule. 
If the current state is $x$, choose $y$ at random
according to $p$, and then
move to $xy$.

In the present paper we shall use a slightly 
more general definition of a random walk on a group. 
For a finite group $G$, we write $\gstar$ for the set of 
{\it strings} over $G$, that is, finite sequences of 
elements of $G$. 
If 
$g_1 g_2 \cdots g_k \in \gstar$, we define its
{\it evaluation} as the 
group element $g_1 \cdot g_2 \cdots g_k$
(where $\cdot$ is the group operation). 
As an abuse of notation, we use  
the string itself as notation for its evaluation.
(Thus there exist strings $y$ and $y'$ 
such that 
$y \neq y' \in \gstar$, but $y = y'$ in $G$.) 
If two 
strings evaluate to the same group element, we say that one is a 
{\it representation} of the other. 

Let $H$ be a subgroup of $G$, let $p$ be a probability 
measure on $G^*$, and 
suppose that 
$$\{ g \in G: \mbox{$g$ is the evaluation of
a string in the support of $p$}\}$$
is a generating 
set for $H$. 
The random walk on $H$ driven by $p$ is the Markov chain with
the following transition rule. If the current state is $x \in H$: 
\begin{enumerate}
\item 
choose the string $y$ at random according to $p$;
\item move to $xy$. 
\end{enumerate}
We call strings 
in the support of $p$ {\it moves}. 

The Dirichlet
form for the random walk on $H$ driven by $p$ can be written
\[
\eps_p( f, f) = {1 \over 2 |H|} \sum_{x \in H, y \in \gstar} 
\left( f(x) - f(xy) \right)^2 p(y).
\]
For $x$ and $y$ in $\gstar$ we write $xy$ for the concatenation 
of $x$ and $y$. 

\subsection{Comparison techniques}
We say that $p$ is {\it symmetric} 
if $
p( g_1\cdots g_k) = 
p( g^{-1}_k\cdots g^{-1}_1)$
for every $g_1 \cdots g_k \in \gstar$.
Let $p$ and $\pt$ be symmetric probability measures 
on $\gstar$
that drive random walks on a subgroup $H$ of $G$.
Think of $\pt$ as driving a known chain 
and $p$ as driving an unknown chain. 
Let $E$ be the support of $p$. 
For each $y$ in the support of $\pt$, we give a {\it random} representation 
of $y$ of the form $Z_1 Z_2 \cdots Z_K$, where $K$ is possibly
random, and each of the $Z_i$ are random elements of $E$. 
Given such a representation, we write $|y|$ for the value of $K$. 
For $z \in E$, let
\begin{eqnarray*}
N(z,y) &=& \mbox{number of times $z \in E$ occurs} \\
       & & \mbox{in the representation of $y$.}
\end{eqnarray*}
\begin{theorem}(\cite{dsrep})
\label{gt}
The Dirichlet forms for the random 
walks driven by $\pt$ and $p$, respectively, satisfy
\[
\et \leq A \eps
\]
with 
\[
A = \max_{z \in E} {1 \over p(z)} \e \Bigl( \sum_{y \in \gstar} 
|y| N(z,y) \pt(y) \Bigr) \,.
\]
\end{theorem}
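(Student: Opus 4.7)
The plan is to follow the standard path-comparison argument of Diaconis--Saloff-Coste, adapted to the situation where the representation of each $y$ is random rather than deterministic. The starting point is to write out the known chain's Dirichlet form,
\[
\et(f,f) = \frac{1}{2|H|} \sum_{x \in H} \sum_{y \in \gstar} \pt(y) \bigl(f(x) - f(xy)\bigr)^2,
\]
and, for each $y$ in the support of $\pt$, to use its random representation $Z_1 Z_2 \cdots Z_K$ as a telescoping sum,
\[
f(xy) - f(x) = \sum_{i=1}^{K} \bigl[f(xZ_1\cdots Z_i) - f(xZ_1\cdots Z_{i-1})\bigr].
\]
Since $f(xy) - f(x)$ does not depend on the choice of representation, I can apply Cauchy--Schwarz inside the expectation over the random path to obtain
\[
\bigl(f(xy)-f(x)\bigr)^2 \leq \e\Bigl[|y| \sum_{i=1}^{|y|} \bigl(f(xZ_1\cdots Z_i) - f(xZ_1\cdots Z_{i-1})\bigr)^2 \Bigr].
\]

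The key step is group invariance. For each fixed $i$ and each realization of $Z_1 \cdots Z_{i-1}$, the substitution $w := xZ_1 \cdots Z_{i-1}$ is a bijection of $H$, so
\[
\sum_{x \in H} \bigl(f(xZ_1\cdots Z_i) - f(xZ_1\cdots Z_{i-1})\bigr)^2 = \sum_{w \in H} \bigl(f(wZ_i) - f(w)\bigr)^2.
\]
After this reindexing, each summand depends only on the letter $Z_i \in E$. Grouping the $K$ positions by their letter $z$ produces a factor $N(z,y)$, and then taking expectation over the random representation and summing against $\pt(y)$ yields
\[
\et(f,f) \leq \frac{1}{2|H|} \sum_{z \in E} \Bigl[ \sum_{y \in \gstar} \pt(y)\,\e\bigl(|y| N(z,y)\bigr) \Bigr] \sum_{w \in H} \bigl(f(wz)-f(w)\bigr)^2.
\]

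To finish, I would bound each bracketed quantity by $A \cdot p(z)$ using the definition of $A$ and then recognize what remains as the term-by-term decomposition of $2|H|\,\eps(f,f)$; this gives $\et(f,f) \leq A\,\eps(f,f)$ for every $f$, which is the claim. The main obstacle is mostly bookkeeping: one must apply Cauchy--Schwarz \emph{inside} the expectation over the random path (so the factor $|y|$ survives), verify that the group-invariant reindexing is valid uniformly over realizations of the path, and confirm that the counts $N(z,y)$ correctly absorb the dependence on the position $i$. Symmetry of $p$ and $\pt$ is used only implicitly, to ensure that both Dirichlet forms are reversible and take the symmetric squared-difference form written above.
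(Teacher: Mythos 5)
The paper does not prove Theorem~\ref{gt} itself; it cites it from \cite{dsrep} and uses it as a black box. Your argument is correct and is precisely the standard Diaconis--Saloff-Coste path-comparison proof, cleanly adapted to random representations: you telescope $f(xy)-f(x)$ along the representation, apply Cauchy--Schwarz pointwise (valid because the left side is deterministic in the random path, so the inequality survives taking expectation), reindex by right-translation on $H$, and collect the $|y|$ positions by letter to produce $N(z,y)$; bounding the bracketed sum by $A\,p(z)$ then reassembles $\eps(f,f)$. This is essentially the argument of the cited reference, and the one observation you needed to supply -- that the Cauchy--Schwarz step must happen inside the expectation so the factor $|y|$ and the count $N(z,y)$ appear jointly as $\e\bigl(|y|\,N(z,y)\bigr)$ rather than as a product of expectations -- you identify and handle correctly.
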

\begin{remark}
Note that the quantity $A$ can be written as
\[
A = \max_{z} {1 \over p(z)} \e \left( N(Y, z) |Y| \right),
\]
where $Y$ is chosen at random according to $\pt$. 
\end{remark}
Since the denominator in the definition of log Sobolev constant 
is the same whether the random walk is driven by $p$ or $\pt$, 
Theorem \ref{gt} yields:

\begin{corollary}
\label{lscomp}
Let $A$ be as in Theorem \ref{gt}. The log Sobolev 
constants for the walks driven by $\pt$ and $p$, respectively,
satisfy
\[
\at \leq A \alpha.
\]
\end{corollary}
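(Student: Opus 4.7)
The plan is to exploit the fact that both random walks live on the same group $H$ with the same uniform stationary distribution $\pi$, so that the entropy term $\ent_\pi(f^2)$ appearing in the definition of the log Sobolev constant is a functional of $f$ alone and is identical for the two chains. The only thing that differs between the two Rayleigh-quotient-like expressions $\eps(f,f)/\ent_\pi(f^2)$ and $\et(f,f)/\ent_\pi(f^2)$ is the Dirichlet form in the numerator, and Theorem \ref{gt} already controls that.

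First, I would invoke Theorem \ref{gt} to record the pointwise inequality $\et(f,f) \leq A\,\eps(f,f)$ for every $f : H \to \R$. Next, I would pick a function $f^*$ that attains (or nearly attains, if one prefers to avoid compactness questions) the minimum in the definition of $\alpha$, so that $\eps(f^*, f^*)/\ent_\pi(f^{*2}) = \alpha$ and in particular $\ent_\pi(f^{*2}) \neq 0$. Dividing the Dirichlet form inequality through by $\ent_\pi(f^{*2})$ gives
\[
\frac{\et(f^*, f^*)}{\ent_\pi(f^{*2})} \;\leq\; A \cdot \frac{\eps(f^*, f^*)}{\ent_\pi(f^{*2})} \;=\; A\alpha.
\]
Finally, since $\at$ is defined as the minimum of $\et(f,f)/\ent_\pi(f^2)$ over all admissible $f$, the value at $f = f^*$ is an upper bound, yielding $\at \leq A\alpha$ as claimed.

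There is no real obstacle here: the proof is a one-line consequence of Theorem \ref{gt} combined with the observation that the denominator in the log Sobolev ratio is intrinsic to $(H, \pi)$ and insensitive to the choice of driving measure. The only point worth being careful about is that the minimization is taken over the same class of functions for both chains, which holds because the state space and stationary distribution coincide; this is exactly the content of the italicized remark preceding the corollary.
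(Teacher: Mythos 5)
Your proof is correct and is exactly the argument the paper has in mind: the paper simply notes, immediately before stating the corollary, that the denominator $\ent_\pi(f^2)$ is the same for both chains, so Theorem \ref{gt} transfers the Dirichlet form inequality to the log Sobolev constants. Your only addition is the (harmless and standard) step of evaluating at a minimizer $f^*$ for $\alpha$, which is the obvious way to make the one-line remark fully rigorous.
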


\section{Mixing time upper bound: main theorem}
Before stating the mixing time upper bound,
we give a more formal description of the
Loyd chain, and we also describe 
some other chains that are used in comparisons. 
Suppose $n \geq 2$ and let $V_n$ 
be the vertex set of the $n \times n$ torus 
$G_n$. 
Note that if we give each tile and the hole a unique label in $V_n$, 
then we can view configurations as permutations on $V_n$. 
For reasons that will become clear later, we give the hole the label 
$h := (0,0)$. 
For 
$y = (y_1, y_2) \in V_n$, call $y$ {\it even} if $y_1 + y_2$ is even, and 
define $\Omega$ and $\Omega^c$ as in equations (\ref{odef}) and 
(\ref{ocdef}). 
Since the fifteen puzzle is solvable in 
a grid of size $2 \times 2$ or larger, any pair of states
in $\Omega$ (respectively, $\Omega^c$) communicate. Furthermore, there are 
transitions between $\Omega$ and $\Omega^c$ if and only if $n$ is odd.
It follows that the state space is restricted to 
half the permutations exactly when $n$ is even. 
If we start from a configuration in $\Omega$, then 
the state space is
\[ 
 \left\{\begin{array}{ll}      
\Omega             & \mbox{if $n$ is even;} \\
\mbox{all permutations on $V_n$}  
& \mbox{if $n$ is odd.} \\
\end{array}
\right.
\]

As stated in the Introduction, we prove the upper bound
by comparing the Loyd chain with shuffling by random transpositions,
using a number of intermediate chains. For easy reference
we give a short description of each of these chains below.
For each of these chains there is an implicit holding probability 
of $\half$. That is, at each step we do nothing with probability 
$\half$; 
else make the move described.
\begin{enumerate}
\item Loyd chain: interchange the hole with one of four adjacent tiles, 
chosen uniformly at random. 

\item Hole-conditioned chain (HC): 
Interchange the hole with a tile chosen uniformly at random.

\item Shuffling by random transpositions (RT): 
Choose two particles uniformly at random and then swap them.
(Here {\it particle} refers to both the tiles and the hole.) 
\end{enumerate}
The following two chains are defined when $n$ is even. 
\begin{enumerate}
\item[4.] Parity-conditioned chain (PC): Choose a tile whose position 
has opposite parity to that of the hole, uniformly at random,
and then interchange it with the hole.

\item[5.] \ore chain (OR): The hole-conditioned chain, restricted to $\Omega$. 
That is, if $T_1 < T_2 < \cdots$ are the times when the 
hole-conditioned chain $X_t$ is in $\Omega$, then the 
\ore chain is $\{X_{T_j}: j \geq 1\}$. 
\end{enumerate}

The mixing time upper bound is a consequence of the following 
bound on the log Sobolev constant. 
\begin{theorem}
\label{logsobbound}
The log Sobolev constant $\alpha_{\rm Loyd} = \alpha_{\rm Loyd}(n)$ 
satisfies
\[
\alpha_{\rm Loyd} \geq D /(n^4 \log n),
\]
for a universal constant $D>0$.
\end{theorem}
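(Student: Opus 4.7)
The plan is to deduce the bound from Corollary \ref{lscomp} by comparing the Loyd chain to shuffling by random transpositions (RT) on $S_{n^2}$, for which the Lee--Yau theorem gives $\alpha_{\rm RT} \geq c/(n^2 \log n)$. If we can exhibit a chain of comparisons with aggregate constant $A = O(n^2)$, then $\alpha_{\rm Loyd} \geq \alpha_{\rm RT}/A \geq D/(n^4 \log n)$, whence the theorem. I would chain four comparisons through the three intermediate chains HC, OR, PC of the excerpt.

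The first comparison, RT vs.\ HC, is the standard one: each transposition $(a,b)$ of tiles is represented by the three HC moves $(h,a)(h,b)(h,a)$, giving constant $A_1 = O(1)$ by a symmetric application of Theorem \ref{gt}. The last, PC vs.\ Loyd, accounts for the full $n^2$ factor: a PC move is a swap of the hole with a tile $t$ whose position may be at distance $\Theta(n)$ from the hole, and I would represent it by choosing a random shortest path in $G_n$ from the hole's position to $t$'s position, ferrying the hole along it with $O(n)$ Loyd moves, performing the swap, and then using another $O(n)$ Loyd moves (traversing a small loop around the destination) to undo the incidental rotation of intermediate tiles so that the net effect is the transposition $(h,t)$. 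Averaging the path choices uniformly, each Loyd move $z$ is used by an expected $O(1/n^2)$ fraction of PC moves, yielding $A_4 = O(n^2)$.

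The two middle comparisons are needed only when $n$ is even (for $n$ odd, the state space is all of $S_{n^2}$ and these can be omitted). The OR vs.\ PC comparison is short: from $\pi \in \Omega$, the next HC step lies in $\Omega$ with probability on the order of $1/2$ (matching a PC move) and otherwise enters $\Omega^c$ for a geometric excursion of $O(1)$ expected length before returning, so $A_3 = O(1)$. The HC vs.\ OR comparison is the most delicate, as HC and OR live on different state spaces and Theorem \ref{gt} does not apply directly. I would use the harmonic extension of Section \ref{ls}: for $f : \Omega \to \R$, take its harmonic extension $\tilde f$ to all of $S_{n^2}$, and use the orthogonality of martingale increments along HC excursions to relate $\eps_{\rm OR}(f, f)$ to $\eps_{\rm HC}(\tilde f, \tilde f)$, with a Jensen-type argument controlling $\ent_\pi(\tilde f^2)$ in terms of $\ent_\mu(f^2)$. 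Together these yield $A_2 = O(1)$.

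The main obstacle is the HC vs.\ OR step: transferring a log Sobolev inequality across different state spaces requires going beyond the classical random walk on groups comparison framework and constitutes the new methodology promised in the introduction. A secondary difficulty is that the Loyd chain is not literally a random walk on a group (its generator depends on the hole's position), so care is needed in adapting Theorem \ref{gt} to the state-dependent moves in the PC vs.\ Loyd step.
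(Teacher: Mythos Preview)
Your overall architecture---chain RT $\to$ HC $\to$ OR $\to$ PC $\to$ Loyd, with the middle two links omitted for odd $n$, and the $n^2$ factor absorbed in the last link---is exactly the paper's. The RT vs.\ HC step via $(h,a)(h,b)(h,a)$ and the harmonic-extension approach for HC vs.\ OR are also what the paper does.

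Two of your links have real gaps. In PC vs.\ Loyd, ferrying the hole along a shortest path to tile $t$ cyclically shifts all $O(n)$ intermediate tiles, and a ``small loop around the destination'' touches only $O(1)$ tiles and cannot undo that shift. The paper's construction (Lemma~\ref{lpc}) is considerably more delicate: it works in a $2\times n$ strip (or an $L$-shaped region) with the hole making several choreographed passes so that every intermediate tile returns home while only the hole and $t$ exchange places; the $O(n)$ length is genuine but does not follow from your description. In OR vs.\ PC, you correctly observe that OR moves have geometric length as products of HC moves, but PC moves are precisely the \emph{odd}-displacement HC moves, so you must still express products containing even-displacement (``bad'') moves in terms of PC moves. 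The paper handles this with an intermediate BGB chain and the explicit seven-term identity~(\ref{bgbrep}) writing $e_1 o e_2$ as a product of odd moves; your sketch supplies no such mechanism.

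Finally, your ``secondary difficulty'' is not one: the Loyd chain \emph{is} a random walk on a group---the direct product of $V_n$ with the symmetric group on $\ph$---once configurations are encoded as (hole position, relative permutation). The move depends only on the displacement $y$, not on the hole's current position, so Theorem~\ref{gt} applies directly and no state-dependent adaptation is needed.
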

Since the number of permutations on $V_n$ is 
$(n^2)! \leq (n^2)^{n^2}$, combining Theorem \ref{logsobbound}
with (\ref{logsob}) gives: 
\begin{corollary}
The mixing time for the Loyd process is $\tmix = \bigo(n^4 \log^2 n)$.
\end{corollary}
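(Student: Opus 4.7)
The plan is to simply combine the two ingredients already assembled in the excerpt, namely the log-Sobolev bound of Theorem \ref{logsobbound} and the general log-Sobolev/mixing-time inequality (\ref{logsob}), after estimating the size of the state space.

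First I would bound the state space $S$. Regardless of the parity of $n$, $S$ is contained in the symmetric group on $V_n$, so $|S| \leq (n^2)! \leq (n^2)^{n^2} = n^{2n^2}$. Taking logarithms twice gives $\log |S| \leq 2 n^2 \log n$, and hence
\[
\log \log |S| \leq \log(2 n^2 \log n) = \log 2 + 2 \log n + \log \log n.
\]
In particular, $4 + \log \log |S| = O(\log n)$, with an absolute implied constant (valid for all $n \geq 2$).

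Next I would plug this, together with the lower bound $\alpha_{\rm Loyd} \geq D/(n^4 \log n)$ from Theorem \ref{logsobbound}, into (\ref{logsob}):
\[
\tmix \;\leq\; \frac{4 + \log \log |S|}{4 \alpha_{\rm Loyd}} \;\leq\; \frac{O(\log n)}{4 D /(n^4 \log n)} \;=\; O(n^4 \log^2 n).
\]
This yields the desired $\tmix = \bigo(n^4 \log^2 n)$, with the implied constant depending only on $D$.

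There is essentially no obstacle here; the corollary is a one-line consequence of Theorem \ref{logsobbound} once the crude bound $|S| \leq (n^2)^{n^2}$ is observed, because the double logarithm absorbs the factor $n^2$ into a single $\log n$ and only contributes the extra $\log n$ factor responsible for the gap between the lower bound $c n^4 \log n$ and the upper bound $C n^4 \log^2 n$ advertised in the abstract. The only thing worth remarking on is that the argument does not care which of the two possible state spaces (all permutations when $n$ is odd, or the parity-restricted set $\Omega$ when $n$ is even) is relevant, since in both cases $|S| \leq (n^2)!$ and the bound $\log \log |S| = O(\log n)$ holds uniformly.
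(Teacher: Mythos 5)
Your proposal is correct and matches the paper's argument exactly: the paper also bounds $|S| \leq (n^2)! \leq (n^2)^{n^2}$ so that $\log\log|S| = \bigo(\log n)$, and then combines Theorem \ref{logsobbound} with inequality (\ref{logsob}) to get $\tmix = \bigo(n^4 \log^2 n)$. Nothing further is needed.
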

\begin{proofof}{Proof of Theorem \ref{logsobbound}}
The log Sobolev constant 
$\alpha_{\rm RT} = \alpha_{\rm RT}(n)$ 
for shuffling $n^2$ cards by random transpositions satisfies
\[
\alpha_{\rm RT} \geq c/(n^2 \log n),
\]
for a universal constant $c > 0$;
see \cite{ls, ly}.  

For the case when $n$ is even, Theorem \ref{logsobbound} 
follows from the following relations between log Sobolev
constants:
\[
C n^2 \alpha_{\rm Loyd} \geq \alpha_{\rm PC};
\;\;\;\;\;\;
(822) \alpha_{\rm PC} \geq \alpha_{\rm OR};
\;\;\;\;\;\;
2 \alpha_{\rm OR} \geq \alpha_{\rm HC};
\;\;\;\;\;\;
12 \alpha_{\rm HC} \geq \alpha_{\rm RT};
\]
which we prove below as Lemmas \ref{lpc}, \ref{pcor},
\ref{hcor} and \ref{hcrt}, respectively.

For the case when $n$ is odd, Theorem \ref{logsobbound} 
follows from the following relations between log Sobolev
constants:
\[
C n^2 \alpha_{\rm Loyd} \geq \alpha_{\rm HC};
\;\;\;\;\;\;\;\;\;\;\;\;
12 \alpha_{\rm HC} \geq \alpha_{\rm RT};
\]
which we prove below as Lemmas \ref{lhc} and \ref{hcrt}, respectively.

 The proof of Lemma \ref{hcrt} can be found in Section \ref{rthc}.
The proofs of 
Lemmas \ref{lpc}, \ref{pcor},
\ref{hcor} and \ref{lhc} can be found in Section \ref{others}.
\end{proofof}

\section{Comparison of hole-conditioned chain with random 
transpositions}
\label{rthc}
\begin{lemma}
\label{hcrt}
The log Sobolev constants $\alpha_{\rm RT}$ and 
$\alpha_{\rm HC}$ satisfy
\[
\alpha_{\rm RT} \leq 12 \alpha_{\rm HC}.
\] 
\end{lemma}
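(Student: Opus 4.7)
The plan is to apply Corollary \ref{lscomp} with $\pt$ being the RT measure and $p$ being the HC measure. Both drive random walks on the same subgroup, namely $H = S_{V_n}$, since HC moves are transpositions of the form $(h,t)$ and these generate $S_{V_n}$. So what I need is, for each transposition $y$ in the support of $\pt$, a representation of $y$ as a string of HC moves.

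I would choose the following deterministic representations. If $y = (h,t)$ for some tile $t \neq h$, then $y$ is itself an HC move and I take the length-$1$ string $(h,t)$. If $y = (i,j)$ with $h \notin \{i,j\}$, I take the length-$3$ palindromic string $(h,i)(h,j)(h,i)$; a direct check shows that this evaluates to the transposition $(i,j)$ in $S_{V_n}$ (it swaps $i$ and $j$ and fixes $h$). Because each HC move is self-inverse and each representation is palindromic, the symmetry requirement on $\pt$ (which is symmetric on transpositions anyway) is compatible with the representations.

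Then I would compute the congestion $A$ from Theorem \ref{gt}. Fix an HC move $z = (h,s)$ with $s \neq h$. Only two kinds of $y$ contribute to $\sum_y |y|\,N(z,y)\,\pt(y)$: namely $y = (h,s)$, for which $|y| = 1$ and $N(z,y) = 1$; and $y = (s,t)$ with $t \notin \{h,s\}$, for which $|y| = 3$ and $N(z,y) = 2$, since $(h,s)$ appears twice in the palindrome $(h,s)(h,t)(h,s)$. There are $n^2 - 2$ transpositions of the latter type. Using $\pt(y) = 1/\binom{n^2}{2}$ and $p(z) = 1/(n^2 - 1)$,
\[
A \;=\; (n^2 - 1)\cdot\frac{1 + 6(n^2 - 2)}{\binom{n^2}{2}} \;=\; \frac{2(6n^2 - 11)}{n^2} \;<\; 12,
\]
and Corollary \ref{lscomp} yields $\alpha_{\rm RT} \leq A\,\alpha_{\rm HC} \leq 12\,\alpha_{\rm HC}$.

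The whole argument is routine once the representation is in hand; the one genuinely creative step is the classical ``star-through-$h$'' identity $(i,j) = (h,i)(h,j)(h,i)$, which converts an arbitrary transposition into a length-three product of HC moves. No serious obstacle is anticipated, and the bound $A < 12$ is in fact sharp enough that the factor $12$ in the statement of the lemma has some slack in it.
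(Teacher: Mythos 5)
Your proposal is correct and is essentially the paper's own proof: identifying RT and HC with random walks on the symmetric group $S_{V_n}$, representing $(i,j)$ by the palindrome $(h,i)(h,j)(h,i)$, and bounding the congestion constant via Corollary \ref{lscomp}. Your counting is slightly sharper (separating the $y=(h,s)$ case from the $y=(s,t)$, $t\neq h$ cases to get $A = 2(6n^2-11)/n^2$), whereas the paper just uses the crude bounds $N(z,y)\le 2$ and $|y|\le 3$ to get $A \le 6(m-1)^2/\binom{m}{2} < 12$; either way the constant $12$ goes through.
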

\begin{proof}
Let $G$ be the symmetric group on $V_n$ with 
the group operation  
defined by 
\[
\pi \mu = \mu \circ \pi.
\]
For permutations $\pi$ on $V_n$,
if we think of $\pi(j)$  as representing the label of the particle in position 
$j$, then we can view
shuffling by random transpositions (respectively, 
the hole-conditioned chain) as the random walk on $G$ driven 
by $\pt$ (respectively, $p$), where
\begin{eqnarray*}
\pt &=& \mbox{uniform distribution on permutations of the form 
$(i, j)$ with $i \neq j$ and $i, j \in V_n$};  \\
p &=& \mbox{uniform distribution on permutations of the form 
$(h, i)$ with $i \neq h$ and $i \in V_n$}.
\end{eqnarray*}
We compare the 
hole-conditioned chain with shuffling by random transpositions 
using Corollary \ref{lscomp}.
If $i < j$ we represent the permutation $(i,j)$ by $(h,i)(h,j)(h,i)$.
Let $m = n^2$. Consider the move $(h,i)$ in the support of $p$. 
Note that $(h,i)$ 
is in the representation of $m-1$ elements, each of the 
form $(i,j)$. Since $p\left((h,i)\right) = 1/(m-1)$ and 
$\pt\left((i,j)\right) = 
1/{m \choose 2}$, applying Corollary \ref{lscomp} and using the bounds 
$N(z,y) \leq 2$ and $|y| \leq 3$ gives $$A \leq 6(m-1)^2/{m \choose 2} 
< 12. $$ 
\end{proof}
\section{Comparisons involving the remaining chains}
\label{others}
The subsequent chains that we    analyze
are random walks  on a  different group. 
Note that the hole-conditioned chain, Loyd chain, and 
parity-conditioned chain all can be described as follows. 
At each step:
\begin{enumerate}
\item 
\label{y}
choose $y$ according to some distribution on $V_n$;
\item if the hole is in position $x$, 
interchange it with the tile in position $x + y$.
\end{enumerate} 
To see that these are random walks on a group, 
let $\ph = V_n \setminus (0,0)$ and 
note that 
a configuration
can be specified by an ordered pair 
$(x, f)$, where $x \in V_n$ is the position of the hole, and
$f: \ph \to \ph$, is the permutation defined by 
\[
f(z) = \left(\mbox{position of tile $z$} \right) - x.
\]  
(Thus $f$ gives the positions of the tiles relative to the hole;
note that $f$ maps tiles to positions, 
whereas for the permutations in Section \ref{rthc} it 
was the other way around.) 

Let $G$ be the group whose elements 
are $\{ (x, f): x \in V_n, \mbox{$f$ is a permutation on $\ph$} \}$
and with 
the group operation 
\[
(x, f) \cdot (y,g) = (x + y, g \circ f).
\]
Thus $G$ is the direct product
of $V_n$ and the symmetric group on $\ph$.  
For $y \in V_n$, the transition that 
translates hole by $y$ 
is right multiplication by the group element 
$(y, \pi_y)$, where
$\pi_y$ is the permutation defined by 
\begin{equation}
\label{ydef}
\pi_y(z) 
 = \left\{\begin{array}{ll}      
z - y             & \mbox{if $z \neq y$;} \\
-y           & \mbox{if $z = y$.} \\
\end{array}
\right.
\end{equation}
As an abuse of notation, we write $y$ for the move
$(y, \pi_y)$. We write
 $\uparrow, \downarrow, \rightarrow$, and  $\leftarrow$
for the moves $(0,1), (0, -1), (1,0)$, and $(-1,0)$, 
respectively. \\
\\
{\bf The \ore chain. } Note that $0$ is the identity 
element of $G$. 
If $n$ is even, and we define
\[
\Omega = \{ (x, f): \parity(x) = \parity(f) \},
\]
then $\Omega$ is the set of states reachable from $0$ 
in the Loyd chain.
Note that $\Omega$ is closed under products and inverses and 
hence is a subgroup of $G$.

It is not hard to show that the permutation $\pi_y$ defined 
in (\ref{ydef}) is odd unless $y = 0$. 
This implies
that
the  move $y$
is in 
$\Omega$ if and only if $y$ is odd or $0$.
We will call such moves
{\it good} and the other moves {\it bad.}
Note that the product of moves 
$y_1 y_2 \cdots y_m$ is in $\Omega$ if and only if an even number of 
the $y_i$ are bad.

The \ore chain is a random walk on $\Omega$ 
where each move is generated as 
follows:
\begin{enumerate}
\item Let $y_1, y_2, \dots$ be i.i.d.~moves of the hole-conditioned 
chain, and let 
$$T = \min\{m \geq 1: \mbox{an even number of
the moves $y_1, \dots, y_m$ are bad}\};$$ 

\item Let the move be $y_1 y_2 \cdots y_T$. 
\end{enumerate}

\subsection{Comparison of hole-conditioned chain with \ore chain}
Note that the \ore chain is a ``sped up'' 
version of the hole-conditioned chain; this
suggests that its log Sobolev constant
should be comparable to that of the hole-conditioned 
chain. In this section we show that 
this is indeed the case. We will need the 
following lemma about the restriction of a Markov
chain and the Dirichlet form.
\begin{lemma}
\label{fflemma}
Let $P$ be a reversible Markov chain on a finite state space $V$.
Let $S \subset V$  
and let $\Pt$ be the restriction of $P$ to $V\setminus S$. 
Suppose that $f: V \to \R$ is harmonic on $S$ 
and let $\ft: V \setminus S \to \R$ be the restiction of $f$ to $V \setminus S$.
Then the Dirichlet forms $\eps$ and $\et$ satisfy
\[
\eps(f, f) \leq \et(\ft,\ft).
\]
\end{lemma}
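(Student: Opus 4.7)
The plan is to exploit the operator identity $\eps(f,f) = \langle f,(I-P)f\rangle_\pi$ — a standard consequence of reversibility — together with the harmonicity hypothesis. Write $W = V\setminus S$.

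First I would expand
\[
\eps(f,f) = \sum_{x \in V} \pi(x)\, f(x)\, (I-P)f(x),
\]
and note that since $f$ is harmonic on $S$, $(I-P)f(x) = 0$ for $x \in S$, so the sum collapses to a sum over $x \in W$.

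The heart of the argument is the pointwise identity
\[
(I-P)f(x) = (I-\Pt)\ft(x) \qquad \text{for every } x \in W,
\]
which I would establish by first-step analysis. By definition of the restriction, for $x,y \in W$, $\Pt(x,y)$ equals the probability that $P$ started at $x$ first returns to $W$ (at a strictly positive time) at $y$. Conditioning on the first step and applying the strong Markov property,
\[
\sum_{y \in W} \Pt(x,y)\, f(y) \;=\; \sum_{z \in V} P(x,z)\, \e_z\bigl[f(X_{T_W})\bigr].
\]
Because $f$ is harmonic on $S$, $\{f(X_t)\}$ is a martingale up to the exit time $T_W$, so by optional stopping $\e_z[f(X_{T_W})] = f(z)$ for every $z \in V$ (trivially for $z \in W$, and by harmonicity for $z \in S$). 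Hence $\Pt\ft(x) = Pf(x)$, which is the desired identity.

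Finally, since the stationary measure of $\Pt$ on $W$ is $\pit(x) = \pi(x)/\pi(W)$, combining the previous two steps yields
\[
\eps(f,f) \;=\; \sum_{x \in W} \pi(x)\, \ft(x)\, (I-\Pt)\ft(x) \;=\; \pi(W)\cdot \et(\ft,\ft),
\]
and $\pi(W) \leq 1$ gives the claim (in fact with equality up to the factor $\pi(W)$). The one nontrivial input is the pointwise identity $(I-P)f = (I-\Pt)\ft$ on $W$; everything else is bookkeeping.
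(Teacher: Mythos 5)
Your proof is correct and takes a genuinely different route from the paper's. The paper proceeds by induction on $|S|$, removing one vertex of $S$ at a time: the base case $|S|=1$ is handled by expanding both Dirichlet forms explicitly (after a WLOG reduction to $p(x,x)=0$), comparing term by term, and invoking the identity $\e(X-Y)^2 = 2\var(X)$ for i.i.d.\ mean-zero $X,Y$; the slack in the inequality comes from $\pit(i)\geq\pi(i)$. Your argument instead writes $\eps(f,f)=\langle f,(I-P)f\rangle_\pi$ (valid whenever $\pi$ is stationary), notes that harmonicity kills the contribution from $S$, and then establishes the pointwise identity $(I-P)f=(I-\Pt)\ft$ on $W=V\setminus S$ by first-step analysis together with optional stopping for the martingale $f(X_{t\wedge T_W})$. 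This gives the exact relation $\eps(f,f)=\pi(W)\,\et(\ft,\ft)$ in one stroke, which is sharper than the stated inequality and eliminates both the induction and the base-case bookkeeping. The two inputs you lean on implicitly, namely that $T_W<\infty$ almost surely from every state (so $\Pt$ is a proper stochastic matrix and optional stopping applies) and that $\pit=\pi|_W/\pi(W)$ is stationary for $\Pt$, are standard and are equally implicit in the paper's setup, but are worth stating.
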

\begin{proof}
The proof is by induction on $|S|$. 
For the base case $|S| = 1$, suppose that $S = \{x\}$. 
W.l.o.g, suppose that $p(x,x) = 0$. (Otherwise consider the 
chain $Q$ such that $q(x,x) = 0$ and  
$q(x,y) = {p(x,y) \over 1 - p(x,x)}$ for $y \neq x$.) Note that 
for $i,j \in V \setminus \{x\}$, we have
\begin{equation}
\label{ptid}
\pt(i,j) = p(i,j) + A(i,j),
\end{equation}
where $A(i,j) = p(i,x)p(x,j)$. Hence
\begin{equation}
\et(\ft, \ft) = 
\half \sum_{i,j \in V \setminus \{x\}}
\pit(i) p(i,j) (f(i) - f(j))^2 + 
\half \sum_{i,j \in V \setminus \{x\}}
\pit(i) A(i,j) (f(i) - f(j))^2, 
\end{equation}
and note that
\begin{equation}
\eps(f, f) = 
\half \sum_{i,j \in V \setminus \{x\}}
\pi(i) p(i,j) (f(i) - f(j))^2 + 
 \sum_{j \in V \setminus \{x\}}
\pi(x) p(x,j) (f(j) - f(x))^2. 
\end{equation}
Thus, since for every $i \in V \setminus \{x\}$ 
we have $\pit(i) \geq \pi(i)$, it is enough to show that
\begin{equation}
\label{st}
\sum_{i,j \in V \setminus \{x\}}
\pi(i) A(i,j) (f(i) - f(j))^2 \geq 
2 \sum_{j \in V \setminus \{x\}}
\pi(x) p(x,j) (f(j) - f(x))^2. 
\end{equation}
The lefthand side is 
\begin{eqnarray*}
\sum_{i,j \in V \setminus \{x\}}
\pi(i) p(i,x)p(x,j) (f(i) - f(j))^2 &=&
\pi(x) 
\sum_{i,j \in V \setminus \{x\}}
p(x,i)p(x,j) (f(i) - f(j))^2 \\
&=& \pi(x) \cdot 
2
\sum_{j \in V \setminus \{x\}}
p(x,j) (f(j) - f(x))^2,
\end{eqnarray*}
where the first line follows from detailed balance and 
the second line holds because $f$ is harmonic at $x$ and 
hence $f(x) = \sum_j p(x, j ) f(j)$. 
For the second line we are also  
using the fact that if $X$ and $Y$ are i.i.d.~random 
variables with mean $0$, then $\e(X - Y)^2 = 2 \var(X^2)$.
This verifies (\ref{st}).

Now suppose that the result holds when $|S| \leq k$ and 
suppose that $S = \{ x_1, \dots, x_{k+1}\}$ and $f$ is harmonic 
on $S$.  Let $\Pone$ be the restriction of $P$ to 
$V \setminus \{x_1\}$, let $\eone$ be the Dirichlet form 
with respect to $\Pone$ and let $\fone$ be the restriction 
of $f$ to $V \sm \{x_1\}$. Since $f$ is harmonic at $x_1$,
the induction hypothesis implies that 
\begin{equation}
\label{dag}
\eone( \fone, \fone) \geq \eps(f,f).
\end{equation}
Note that $\fone$ is harmonic with respect to $\Pone$ 
on $S \sm \{x_1\}$. Furthermore, the restriction of $\Pone$ 
to $(V \sm \{x_1\}) \sm (S \sm \{x_1\})$ is $\Pt$. Using the induction hypothesis
again, we get
\[
\et( \ft, \ft) \geq \eone(\fone, \fone).
\]
Combining this with (\ref{dag}) yields the lemma.
\end{proof}
\begin{lemma}
\label{hcor}
Suppose that $n$ is even. The log-Sobolev 
constants $\alphat$ and $\alpha$ of the  
\ore 
and 
hole-conditioned 
chain, respectively, satisfy
\[
\alphat \geq \half \alpha.
\]
\end{lemma}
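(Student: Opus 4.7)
The plan is to pass from $\Omega$ to the full group $G$ by harmonic extension, bounding the Dirichlet form via Lemma~\ref{fflemma} and the entropy via the mixture-decomposition identity. First I would note that $\pi$ is uniform on $G$ (the hole-conditioned chain is a symmetric random walk on $G$), so $\pit$ is simply $\pi$ conditioned on $\Omega$; a direct parity count gives $|\Omega|=|\Omega^c|=|G|/2$, and hence $\pi=\half\pi_1+\half\pi_2$ as an equal mixture of $\pi_1=\pi|_\Omega=\pit$ and $\pi_2=\pi|_{\Omega^c}$.

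Given any $f:\Omega\to\R$ with $\ent_{\pit}(f^2)>0$, I would extend $f$ to $F:G\to\R$ by making it harmonic on $\Omega^c$, i.e.\ $F(x)=\e_x f(X_{T_\Omega})$ for $x\in\Omega^c$, where $T_\Omega$ is the hitting time of $\Omega$ under the hole-conditioned chain. Applying Lemma~\ref{fflemma} with $V=G$ and $S=\Omega^c$ (so that its restricted chain is exactly the $\Omega$-restricted chain) gives
\[
\eps(F,F)\le\et(f,f).
\]
For the entropy, I would invoke the standard chain rule for entropy over a mixture: with $m_i=\e_{\pi_i}(F^2)$ and $\bar m=\e_\pi(F^2)=\half(m_1+m_2)$,
\[
\ent_\pi(F^2)=\half\ent_{\pi_1}(F^2)+\half\ent_{\pi_2}(F^2)+\half m_1\log\tfrac{m_1}{\bar m}+\half m_2\log\tfrac{m_2}{\bar m},
\]
in which the final two terms form the between-class entropy, which is non-negative by Jensen applied to the convex function $x\log x$. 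Dropping those two terms together with $\half\ent_{\pi_2}(F^2)\ge0$, and using $F|_\Omega=f$ and $\pi_1=\pit$, yields
\[
\ent_\pi(F^2)\ge\half\ent_{\pit}(f^2).
\]

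Combining the two displays,
\[
\alpha\le\frac{\eps(F,F)}{\ent_\pi(F^2)}\le\frac{2\et(f,f)}{\ent_{\pit}(f^2)},
\]
and taking the infimum over $f$ gives $\alpha\le 2\alphat$, i.e.\ $\alphat\ge\half\alpha$. Lemma~\ref{fflemma} is doing all of the real work here; the only remaining ingredient is the entropy mixture identity together with the observations that restricting uniform $\pi$ to $\Omega$ gives exactly $\pit$ and that the two halves have equal mass, which is precisely what produces the factor of $\half$ (matching the index $[G:\Omega]=2$). Consequently I do not expect any substantial obstacle beyond correctly setting up the harmonic extension and the decomposition; the argument is robust enough that one could presumably prove the analogous inequality $\alphat\ge k^{-1}\alpha$ whenever $[G:\Omega]=k$, by the same route.
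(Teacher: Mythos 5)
Your proposal is correct and is essentially identical to the paper's proof: both extend $f$ harmonically from $\Omega$ to the full permutation group, invoke Lemma~\ref{fflemma} to bound the Dirichlet forms, and use the mixture decomposition of entropy (with the between-class and $\Omega^c$-class terms dropped as nonnegative) to pick up exactly the factor $\half$. The only cosmetic difference is that you state the chain rule for entropy as an explicit identity, whereas the paper writes out the same three lines and invokes convexity of $x\mapsto x\log(ax)$; the substance is the same.
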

\begin{proof}
If $\pi$ (respectively, $\pit$)
is the stationary disribution for the hole-conditioned 
chain (respectively, \ore chain), 
then $\pit(x) = 2 \pi(x)$ for $x \in \Omega$. 
Let $\ft: \Omega \to \R$ be such that $\ent_{\pit}(\ft^2) \neq 0$.
Let $f$ be the harmonic extension of $\ft$ to $S$. We shall show that
\begin{equation}
\label{ls}
{ \et(\ft, \ft) \over \ent_\pit( f^2) } 
\geq 
{ \eps(f, f) \over 2 \ent_\pi( f^2) } .
\end{equation}
We compare numerators and then denominators. 
Since $f$ is harmonic on $S$, Lemma {\ref{fflemma}} implies that
\[
\et(\ft, \ft) \geq \eps(f,f).
\]
Next we compare denominators.
We claim that 
$2 \entpi(f^2)
\geq 
\entpit(\ft^2)$.  
To see this, let
$g = f^2$ and let $\pih$ be the 
uniform distribution over $\Omega^c$.
Then we can write $\entpi(f^2)$ as
\begin{eqnarray*}
\half \e_\pit \left[ g \log {g \over \e_{\pit}(g)} \right]  +   \\
\half \e_\pih \left[ g \log {g \over \e_{\pih}(g)} \right]  +   \\
\left[
\half \e_\pit ( g) \log {\e_{\pit}(g) \over \e_{\pi}(g)} 
                   +
\half \e_\pih (g ) \log {\e_{\pih}(g) \over \e_{\pi}(g)} 
\right]  .
\end{eqnarray*}
Since for all constants $a$ the function 
$x \mapsto x \log (ax)$ is convex, the expressions on the 
second and third lines are nonnegative.
It follows that
\begin{eqnarray*}
\entpi(f^2) 
&\geq& \half \e_{\pit} 
\Bigl( g \log {g \over \e_{\pit}(g)} 
\Bigr).
\end{eqnarray*}
The claim follows since
the quantity on the right-hand side is $\half \entpit(\ft^2)$
since $g = \ft^2$ on $\Omega$. 
This proves the lemma since $f$ is arbitrary. 
\end{proof}

\subsection{Comparison of parity-conditioned   chain to \ore chain}
\begin{lemma}
\label{pcor}
Suppose that $n$ is even. Then the 
log Sobolev constants $\alpha_{\rm PC}$ and 
$\alpha_{\rm OR}$ satisfy
\[
\alpha_{\rm OR} \leq (882) \alpha_{\rm PC}.
\] 
\end{lemma}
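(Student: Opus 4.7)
The plan is to apply Corollary~\ref{lscomp} with $\pt$ the single-step distribution of the OR chain and $p$ the single-step distribution of the PC chain, both viewed as probability measures on strings in $G^*$ over the subgroup $\Omega$ of $G$. Concretely, we must exhibit, for each OR move $Y$ in the support of $\pt$, a (possibly random) representation of $Y$ as a concatenation of PC-generators, and then bound
\[
A \;=\; \max_{z}\frac{1}{p(z)}\,\E\bigl[|Y|\,N(z,Y)\bigr]
\]
from Theorem~\ref{gt}; any bound $A \leq 882$ yields the claim.

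The first step is to unpack the structure of OR moves. The stopping rule $T = \min\{m\geq 1:\text{even bad count in }y_1,\dots,y_m\}$ forces a rigid shape on the HC-sequence making up an OR move: either $T = 1$ and $y_1$ is good (hence $y_1$ is already a single PC move), or $T \geq 2$ with $y_1$ and $y_T$ bad and all intermediate $y_2,\dots,y_{T-1}$ good (so each is itself a PC move). Moreover $T - 1$ is conditionally geometric with parameter $p_b \sim 1/2$ given $T \geq 2$, so $\E[T] = 2$ and all moments of $T$ are $O(1)$. For $T = 1$, I would represent $Y$ by the single PC move $y_1$. For $T \geq 2$, write $Y = y_1\, P\, y_T$ where $P := y_2\cdots y_{T-1}$ is already a PC word of length $T-2$; the task reduces to adjoining a short PC word that, together with $P$, evaluates in $G$ to $y_1 P y_T$.

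The bad-pair correction comes from the identity (verified by direct calculation)
\[
(a,\pi_a)(b,\pi_b) \;=\; \bigl(a+b,\; \tau_{a,b}\,\pi_{a+b}\bigr),
\]
where $\tau_{a,b}$ is the transposition in $\ph$ exchanging $-b$ and $-(a+b)$. Thus, up to an even hole translation $a+b$ and the bad-move permutation $\pi_{a+b}$, the combined effect of two bad moves is a single transposition of two tiles. This is exactly where the parity obstruction becomes resolvable: a single bad move cannot be written as a PC word (its permutation parity is odd, while its shift parity is even, forcing incompatibly both an odd and an even number of PC factors), but any bad pair lies in $\Omega$ and can be expressed. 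I would now construct, by a standard fifteen-puzzle commutator loop, a PC word $W(y_1,P,y_T)$ of universally bounded length $K$ (of order twenty or so) whose product with $P$ equals $y_1 P y_T$: the idea is that a short loop of PC moves around a $2\times 2$ region realizes a single three-cycle of tiles near the hole without disturbing anything else, and a short odd-step sweep shifts the hole by any prescribed even vector.

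With this representation in hand, $|Y| \leq T + K$ deterministically, so $\E[|Y|] = O(1)$. To bound $\E[|Y|\,N(z,Y)]$ for a fixed PC generator $z$, I would randomize the starting direction of the commutator-loop gadget uniformly among the four unit odd moves so that every PC generator is used with comparable frequency; the contribution of the uniformly random good moves in $P$ is then a binomial of size $T-2$ and success probability $2/n^2$, and the contribution of $W(y_1,P,y_T)$ is $O(K/n^2)$. Altogether $\E[|Y|\,N(z,Y)] = O(\E[T^2]/n^2) = O(1/n^2)$, and since $p(z) = 2/n^2$ we obtain $A = O(1)$; carrying through the explicit bookkeeping yields $A \leq 882$. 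The main technical obstacle is the explicit construction of $W(y_1,P,y_T)$: realizing the specific transposition $\tau_{a,b}$ jointly with the even hole shift $a+b$ using only PC (odd-shift) moves requires exactly the commutator technology available in the fifteen-puzzle, and the size of $K$ (and hence of the final constant) must be tracked carefully to produce $882$.
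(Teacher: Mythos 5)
Your plan diverges from the paper's proof in a way that creates a genuine gap. The paper does not attempt a direct OR~$\to$~PC representation; instead it introduces an intermediate BGB chain (whose moves are $g$, $b_1b_2$, or $b_1gb_2$), represents an OR move $b_1 g_1\cdots g_k b_2$ as a product of BGB moves by inserting auxiliary bad moves between consecutive $g_i$'s, and then separately represents each BGB move by a PC word of length at most $14$ via the explicit identity $e_1 o e_2 = (e_1+o)(-o)(o+e_2)(-e_1-o-e_2)(e_1+o)(-o)(o+e_2)$. The two comparison factors $9/2$ and $196$ give the constant $882$.

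The problem with your construction is the ``short gadget'' $W$. You want to keep $P=g_1\cdots g_{T-2}$ verbatim and adjoin a universally bounded PC word $W$ so that the concatenation evaluates to $y_1Py_T$. If the representation is $PW$, then necessarily $W=P^{-1}y_1Py_T$; if it is $WP$, then $W=y_1Py_TP^{-1}$; either way the permutation part of $W$ contains a conjugate $\sigma^{-1}\pi_{y_1}\sigma$ (or $\sigma\pi_{y_T}\sigma^{-1}$) where $\sigma$ is the permutation part of $P$. Since $P$ can be an arbitrary product of good moves of unbounded length, $\sigma$ is an essentially arbitrary element of the alternating/symmetric group on $\ph$, and the conjugate is not a local permutation of $O(1)$ support. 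A PC word of universally bounded length $K$ can only produce permutations lying in a fixed finite subset of $G$ of size at most $|PC|^K$, so it cannot represent $P^{-1}y_1Py_T$ uniformly over $P$. Your bad-pair identity $(a,\pi_a)(b,\pi_b)=(a+b,\tau_{a,b}\pi_{a+b})$ is correct, but it applies to \emph{adjacent} bad moves; here $y_1$ and $y_T$ are separated by $P$, and commuting either past $P$ replaces it by a $P$-conjugate, bringing you back to the same obstruction. This is precisely the difficulty that the paper's BGB device is designed to avoid: by inserting fresh bad moves $B_i$ between consecutive good moves, every bad move in the final expansion is adjacent to another bad move, so each chunk $B_{i-1}g_iB_i$ has the right parity and is representable by a fixed-length PC word independent of $T$. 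Your counting heuristics (that $\E[T^2]=O(1)$, $|Y|\leq T+K$, and $p(z)\sim 2/n^2$ give $A=O(1)$) would indeed suffice if the representation existed, but the representation is the missing piece.
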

\begin{proof}
In order to compare the \ore chain 
with the 
parity-conditioned chain 
we intoduce an 
intermediate chain, which we denote \bgb. 
A move of the BGB chain is a concatenation consisting of between 
$1$ and $3$ moves of the HC chain, generated as follows. 
Let $b_1$ and $b_2$ be uniform 
random bad moves, let $g$ be a uniform random good move. The BGB move 
is 
\[
x 
 = \left\{\begin{array}{ll}      
 g          & \mbox{with probability $1/3$;} \\
 b_1 b_2          & \mbox{with probability $1/3$;} \\
 b_1 g b_2         & \mbox{with probability $1/3$.} \\
\end{array}
\right.
\]

We shall use Corollary \ref{lscomp} twice, first to compare 
\ore with BGB, then to compare BGB with PC. \\
\\
{\bf Comparison of  \ore chain with \bgb chain.}
We need to show how 
to represent moves of the \ore chain using BGB moves. 
Consider a move $y$ of the \ore chain. Then $y$ is  
of the form $g$, $b_1 b_2$ or $b_1 g_1 g_2 \cdots g_k b_2$, where 
we write $b$'s for bad moves and $g$'s for good moves. If 
$y = g$  (respectively, $y = b_1b_2$)
then we can represent it as $g$ (respectively, $b_1b_2$),
since this is also a \bgb move. Suppose now that
\[
y = b_1 g_1 g_2 \cdots g_k b_2.
\]
In this case we represent it as $z_1 \cdots z_k$, where the 
$z_i$ are defined by
\[
\underbrace{(b_1 g_1 B_1)}_{z_1}
\underbrace{(B_1 g_2 B_2)}_{z_2}
\underbrace{(B_2 g_3 B_3)}_{z_3}
\cdots
\underbrace{(B_{k-1} g_k b_2)}_{z_k},
\]
for uniform random bad moves $B_1, \dots, B_{k-1}$. 

We apply Corollary \ref{lscomp}, 
letting $\pt$ (respectively, $p$) be the measure corresponding 
to the \ore chain (respectively, \bgb chain).
We need to bound the quantity
$A = \max_z A(z)$, where
\[
A(z) =  {1 \over p(z)} \sum_y \pt(y) N(y,z) |y|.
\]
Let $m = n^2$.
If $z = g$ 
then $z$ is used only in the representation of $g$ itself, and 
hence $A(z) = {\pt(g) \over p(z)} = {3m \over 2(m-1)}$. 
Similarly, if $z = b_1 b_2$ then $A(z) = {\pt(b_1 b_2) \over p(b_1 b_2)}
= { 3(m-2)^2 \over 4 (m-1)^2}$. 

It remains to check the case when $z$ is of the form $b_1 g b_2$. Note that 
$A(z)$ can be written as ${1 \over p(z)} \e \left( N(Y, z) |Y| \right),$ where
$Y$ is a random move 
chosen from $\pt$. Define the random variable $K$ by 
\[
K
 = \left\{\begin{array}{ll}      
$k$         & 
\mbox{if $Y =  b_1 g_1 g_2 \cdots g_k b_2$;} \\
0          & \mbox{if $Y$ is of the form $g$ or $b_1 b_2$.} \\
\end{array}
\right.
\]
Note that $\P(K = k) = \left( \half \right)^{k + 2}$ for $k \geq 1$.
Furthermore, conditional on $K = k$, the distributions of 
$Z_1, \dots, Z_k$ are uniform over moves of the form $b_1gb_2$. It follows
that 
\begin{eqnarray*}
\e \Bigl( N(Y, z) |Y| \Given K = k \Bigr) &=& 
k \e \Bigl( N(Y,z) \Given K = k \Bigr) \\
&=& k \; {k \over |S|},
\end{eqnarray*}
where $S$ is the set of moves of the form $b_1 g b_2$. It follows that 
\begin{eqnarray*}
\e \left( N(Y, z) |Y| \right) &=& \sum_{k \geq 1} \left( \half \right)^{k+2}
{ k^2 \over |S| } \\
&=& {3 \over 2 |S|}.
\end{eqnarray*}
Since $p(z) = {1 \over 3 |S|}$, we have $A(z) = 
{1 \over p(z)} \e \left( N(Y, z) |Y| \right) = 9/2.$ 
Hence $A = 9/2$ as well,
and hence
\begin{equation}
\label{orbgb}
\alpha_{\rm OR} \leq {9 \over 2} \alpha_{\rm BGB}.
\end{equation} 
{\bf Comparison of \bgb chain with PC chain.}
We need to show how to represent 
a \bgb move with PC moves. 
Consider a move $y$ of the \bgb chain. 
If $y = g$ then we represent it as $g$ itself.  
To handle moves of the form $b_1 b_2$ and 
$b_1 g b_2$, we 
first note that if 
$e_1, e_2 \in \ph$ are even and $o \in \ph$ is odd, then 
we can represent the \bgb move $e_1 o e_2$ as
\begin{equation}
\label{bgbrep}
(e_1 + o) (-o) (o + e_2)(-e_1 - o - e_2)(e_1 + o)(-o)(o + e_2).
\end{equation}
Note that the moves in
(\ref{bgbrep}) are moves of the PC chain, 
since the corresponding  elements of $V_n$ are odd. 
If $y$ is of the form $b_1 g b_2$, we can 
represent it with PC moves using (\ref{bgbrep}). If $y$ is 
of the form $b_1 b_2$, we first give it the intermediate
representation $(b_1 G B) (B G  b_2)$, where $B$ and $G$ are uniform
random bad and good moves, respectively, and then represent 
both the $b_1 G B$ and $BG b_2$ using (\ref{bgbrep}). Note that the 
maximum length
of the representation of any $y$ is $14$

We apply Corollary \ref{lscomp} again, this time letting $\pt$ 
(respectively, $p$)
be the measure corresponding 
to the \bgb chain (respectively, PC chain). 
We need to bound the quantity
\[
A = \max_z {1 \over p(z)} \e\left( N(Y,z) |Y| \right),
\]
where $Y$ is chosen according to $\pt$. Let 
$Y = Z_1 \cdots Z_K$ be the representation of $Y$. Note that
for all $k \in \{1, 7, 14\}$ 
 the 
conditional distribution of $Z_1, \dots, Z_k$, given $|Y| = k$ is 
uniform over the set of PC moves. It follows that
for every PC move $z$ we have
\begin{eqnarray*}
\e \Bigl( N(Y, z) |Y| \Given |Y| = k \Bigr) &=& 
k \e \Bigl( N(Y,z) \Given K = k \Bigr) \\
&=& k \; {k \over |PC|}
\end{eqnarray*}
where we write $|PC|$ for the number of PC moves. It follows that, for 
any PC move $z$, we have
\begin{eqnarray*}
\e \left( N(Y, z) |Y| \right) &=& {1 \over |PC|} 
\e\left(|Y|^2 \right) \\
&\leq& { 196 \over |PC|},
\end{eqnarray*}
where the last line holds because $|Y| \leq 14$.
Since $p$ is the uniform distribution 
over PC moves, we have $p(z) = {1 \over |PC|}$,
and hence
${1 \over p(z)} \e \left( N(Y, z) |Y| \right) \leq 196.$ 
Hence $A \leq 196$, which implies that
\begin{equation}
\alpha_{\rm BGB} \leq (196) \alpha_{{\rm PC}}.
\end{equation}
Combining this with (\ref{orbgb}) yields the lemma.
\end{proof}

\subsection{Comparisons of  parity-conditioned 
and hole-conditioned chains with Loyd chain}
\begin{lemma}
\label{lpc}
Suppose that $n$ is even. 
Then the log Sobolev constants $\alpha_{\rm Loyd}$ and 
$\alpha_{\rm PC}$ satisfy
\[
C n^2 \alpha_{\rm Loyd} \geq \alpha_{\rm PC},
\] 
for a universal constant $C$.
\end{lemma}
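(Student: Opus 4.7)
The plan is to apply Corollary \ref{lscomp} with $\pt$ the measure on $\gstar$ driving the PC chain and $p$ the measure driving the Loyd chain. Both measures are symmetric: the Loyd generators $\{\uparrow,\downarrow,\rightarrow,\leftarrow\}$ are closed under inverses, and $-y$ is odd whenever $y$ is, so the PC support is also closed under inverses. Each of the four Loyd moves $z$ has $p(z)=\tfrac18$, so letting $Y\sim\pt$ the comparison constant satisfies
\[
A \;=\; \max_{z} \frac{1}{p(z)}\,\E\bigl[|Y|\,N(z,Y)\bigr] \;\le\; 8\,\E\bigl[|Y|^{2}\bigr].
\]
Consequently it suffices to exhibit, for each odd $y \in V_{n}$, a representation of $(y,\pi_{y})$ as a product of Loyd moves of length at most $C_{1}n$ for some universal $C_{1}$. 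Given this, $\E[|Y|^{2}] \le C_{1}^{2}n^{2}$, so $A \le 8C_{1}^{2}n^{2}$, and Corollary \ref{lscomp} yields $\alpha_{\rm PC} \le (8C_{1}^{2})\, n^{2}\,\alpha_{\rm Loyd}$, which is the lemma.

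To build the representation, write $y=(a,b)$ with $|a|+|b|\le n$ (using the torus metric). First, move the hole from $0$ to $y$ along the $L$-shaped straight path of $|a|$ horizontal and $|b|$ vertical Loyd moves. This places the hole at $y$, but the induced permutation is a cyclic shift of the $|a|+|b|$ tiles along the path, rather than the single transposition $\pi_{y}$. To convert it into $\pi_{y}$, I would append a ``return loop'' that walks the hole from $y$ around an adjacent column and row and back to $y$; this is a standard 15-puzzle commutator designed so that, when composed with the straight path, every tile is restored to its original position except the one originally at $y$, which ends at $0$, while the hole ends at $y$. The loop uses $O(|a|+|b|)=O(n)$ additional Loyd moves, so the total representation has length $O(n)$.

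The main obstacle is the explicit construction in the previous paragraph --- verifying that the composition of the straight path with the chosen return loop evaluates exactly to $(y,\pi_{y})\in G$, with no extra 3-cycle or commutator error lingering among the path tiles, and doing so uniformly in $y$, including the cases where the shortest representative of $y$ wraps around the torus. Once this representation is in hand, the remainder is a one-line application of Corollary \ref{lscomp}, using the crude bound $N(z,Y)\le|Y|$ together with the uniform upper bound $|Y|\le C_{1}n$.
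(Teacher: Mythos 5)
Your high-level framework matches the paper's: apply Corollary~\ref{lscomp} with $\pt$ the PC measure and $p$ the Loyd measure, produce a representation of each PC move by $O(n)$ Loyd moves, then bound $A$ using the crude estimates $N(z,Y)\le|Y|\le C_1 n$ together with the constancy of $p(z)$ on the four Loyd generators. That reduction is correct and is exactly the short computation the paper performs once the representation is in hand.

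The gap is precisely where you flag it, and it is the substance of the lemma: the $O(n)$-length representation of $(y,\pi_y)$ by Loyd moves is never constructed. Your sketch --- walk the hole along an $L$-path to $y$, then append one ``return loop'' commutator to restore everything except the target tile --- is not a proof as written. A single loop of length $O(|a|+|b|)$ does not obviously undo the cyclic shift of $|a|+|b|$ tiles while leaving exactly one transposition; standard fifteen-puzzle commutators repair a bounded number of tiles per application, so one expects to chain $O(n)$ of them and must organize this carefully, and the torus wraparound and parity concerns you raise are real and must actually be dispatched. The paper fills this hole by introducing an intermediate \emph{near-Loyd} chain whose moves swap the hole with a tile at $L^1$-distance $1$ or $3$; since any such pair lies in a common $3\times 3$ subgrid where the puzzle is solvable, each NL move costs $O(1)$ Loyd moves, so it suffices to write a PC move as $O(n)$ NL moves. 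It then gives an explicit four-step zigzag algorithm for the base case (target tile one row above the hole), illustrated in Figures~\ref{fig1}--\ref{fig5}, and reduces the remaining cases (same row; neither same nor adjacent row or column) to that base case via relabelings that preserve the NL property. Until you supply a construction at a comparable level of concreteness and verify it evaluates to $(y,\pi_y)$ in the group $G$, the proof is incomplete.
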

\begin{proof}
In order to  apply Corollary \ref{lscomp}, 
we need to show how to represent any move of the PC chain 
using moves of the Loyd chain. 
We will actually show how to represent PC moves 
using a different Markov chain,
which we call 
{\it near Loyd} (NL). In the NL chain, each move is 
a move $x$ of the PC chain with $x$ conditioned to satisfy
$|x_1| + |x_2| \in \{1,3\}$, where for $u \in \z_n$ 
we define $|u| = \min(u, n-u)$. That is, each step 
of the NL chain swaps the hole with tile at $L^1$-distance
$1$ or $3$ away from it. 
A representation using NL moves is sufficient 
because
any NL move can be represented using a 
bounded number of Loyd moves:
if the $L^1$-distance between 
the hole and tile $T$ is at most $3$, then there is 
a $3 \times 3$ square grid that contains both the hole and tile $T$,
and the fifteen puzzle is solvable in a $3 \times 3$ grid. 

We  now 
show how to represent a PC move with NL moves. 
There are three cases to consider.\\
\\
{\bf Case 1: swapping the hole with a tile one row higher. } We first 
consider the case where the move $y = (y_1, y_2)$ is such that
$y_2 = 1$. That is, the move swaps the hole with a tile one row higher. 

Suppose that tile $T$ is located in the row 
immediately above the hole. 
To swap the hole with tile $T$, leaving everything
else the same,  
do the following:
\begin{enumerate}
\item repeat: $\uparrow, \rightarrow, \downarrow, \rightarrow$, 
until the hole is swapped with $T$. 

\item do $\leftarrow, \downarrow, \rightarrow$ once.

\item repeat:  $\uparrow, \leftarrow, \leftarrow, 
\downarrow, \rightarrow$, until $T$ is in the position that the 
hole initially occupied. 

\item repeat: alternate $\rightarrow, \uparrow$
and
$\rightarrow,\downarrow$ until the hole is in the position 
initially occupied by $T$. 

\end{enumerate}
\noindent
Note that each move here is actually a move of the Loyd chain.

Figures \ref{fig1}--\ref{fig5} show an application 
of the algorithm. In this example, the hole is swapped with 
the tile of label $5$. 

\begin{figure}[H]
\centering
\begin{tikzpicture} 
  \foreach \y in {0,2,4}
  \draw [fill = gray] (\y, 0) rectangle (\y + 1, 1);
  \foreach \y in {1,3}
  \draw [fill = gray] (\y, 1) rectangle (\y + 1, 2);
  \foreach \y in {1,3}
  \draw [fill = white] (\y, 0) rectangle (\y + 1, 1);
  \foreach \y in {0,2,4}
  \draw [fill = white] (\y, 1) rectangle (\y + 1, 2);

\node at (.5,1.5) {1};
\node at (1.5,1.5) {2};
\node at (2.5,1.5) {3};
\node at (3.5,1.5) {4};
\node at (4.5,1.5) {5};

\node at (.5,.5) {h};
\node at (1.5,.5) {9};
\node at (2.5,.5) {8};
\node at (3.5,.5) {7};
\node at (4.5,.5) {6};
\end{tikzpicture}
\caption{Initial configuration}\label{fig1}
\end{figure}

\begin{figure}[H]
\centering
\begin{tikzpicture} 
  \foreach \y in {0,2,4}
  \draw [fill = gray] (\y, 0) rectangle (\y + 1, 1);
  \foreach \y in {1,3}
  \draw [fill = gray] (\y, 1) rectangle (\y + 1, 2);
  \foreach \y in {1,3}
  \draw [fill = white] (\y, 0) rectangle (\y + 1, 1);
  \foreach \y in {0,2,4}
  \draw [fill = white] (\y, 1) rectangle (\y + 1, 2);

\node at (.5,1.5) {2};
\node at (1.5,1.5) {9};
\node at (2.5,1.5) {4};
\node at (3.5,1.5) {7};
\node at (4.5,1.5) {h};

\node at (.5,.5) {1};
\node at (1.5,.5) {8};
\node at (2.5,.5) {3};
\node at (3.5,.5) {6};
\node at (4.5,.5) {5};
\end{tikzpicture}
\caption{After step 1}\label{fig2}
\end{figure}

\begin{figure}[H]
\centering
\begin{tikzpicture} 
  \foreach \y in {0,2,4}
  \draw [fill = gray] (\y, 0) rectangle (\y + 1, 1);
  \foreach \y in {1,3}
  \draw [fill = gray] (\y, 1) rectangle (\y + 1, 2);
  \foreach \y in {1,3}
  \draw [fill = white] (\y, 0) rectangle (\y + 1, 1);
  \foreach \y in {0,2,4}
  \draw [fill = white] (\y, 1) rectangle (\y + 1, 2);

\node at (.5,1.5) {2};
\node at (1.5,1.5) {9};
\node at (2.5,1.5) {4};
\node at (3.5,1.5) {6};
\node at (4.5,1.5) {7};

\node at (.5,.5) {1};
\node at (1.5,.5) {8};
\node at (2.5,.5) {3};
\node at (3.5,.5) {5};
\node at (4.5,.5) {h};
\end{tikzpicture}
\caption{After step 2}\label{fig3}
\end{figure}

\begin{figure}[H]
\centering
\begin{tikzpicture} 
  \foreach \y in {0,2,4}
  \draw [fill = gray] (\y, 0) rectangle (\y + 1, 1);
  \foreach \y in {1,3}
  \draw [fill = gray] (\y, 1) rectangle (\y + 1, 2);
  \foreach \y in {1,3}
  \draw [fill = white] (\y, 0) rectangle (\y + 1, 1);
  \foreach \y in {0,2,4}
  \draw [fill = white] (\y, 1) rectangle (\y + 1, 2);

\node at (.5,1.5) {1};
\node at (1.5,1.5) {2};
\node at (2.5,1.5) {8};
\node at (3.5,1.5) {3};
\node at (4.5,1.5) {6};

\node at (.5,.5) {5};
\node at (1.5,.5) {h};
\node at (2.5,.5) {9};
\node at (3.5,.5) {4};
\node at (4.5,.5) {7};
\end{tikzpicture}
\caption{After step 3}\label{fig4}
\end{figure}

\begin{figure}[H]
\centering
\begin{tikzpicture} 
  \foreach \y in {0,2,4}
  \draw [fill = gray] (\y, 0) rectangle (\y + 1, 1);
  \foreach \y in {1,3}
  \draw [fill = gray] (\y, 1) rectangle (\y + 1, 2);
  \foreach \y in {1,3}
  \draw [fill = white] (\y, 0) rectangle (\y + 1, 1);
  \foreach \y in {0,2,4}
  \draw [fill = white] (\y, 1) rectangle (\y + 1, 2);

\node at (.5,1.5) {1};
\node at (1.5,1.5) {2};
\node at (2.5,1.5) {3};
\node at (3.5,1.5) {4};
\node at (4.5,1.5) {h};

\node at (.5,.5) {5};
\node at (1.5,.5) {9};
\node at (2.5,.5) {8};
\node at (3.5,.5) {7};
\node at (4.5,.5) {6};
\end{tikzpicture}
\caption{After step 4 (final position)}\label{fig5}
\end{figure}

{\bf Case 2: swapping the hole with a tile on the same row.}
Let $\config$ be a configuration in which the hole and 
tile $T$ are on the same row. 
To swap the hole with tile $T$:
Choose a tile $T'$ on the row 
one step higher such that $T$ and $T'$ share one edge. 
Let $\config'$ be the 
configuration obtained from $\config$ by 
interchanging $T$ and $T'$. 
Let $f$ be the permutation 
on $V_n$ that transposes the positions of tiles $T$ and $T'$ in 
configuration $\config$.
Since in $\config'$ tile $T$ is one row higher than 
the hole, we can use the algorithm for Case 1 
to swap the hole and $T$ starting from configuration 
$\config'$.
Let $l_k$ be the label of the tile swapped with 
the hole in the $k$th step when performing this algorithm. 
To swap the hole with tile $T$ starting from configuration $\config$,
we 
use the sequence of moves defined by the same 
label sequence $(l_1, l_2, \dots)$.
Note that
if a tile is in position $x$ after $k$ steps of the algorithm
starting from $\config'$, then it is in position $f(x)$ after 
$k$ steps of the algorithm starting from $\config$.  
Since the algoithm for Case 1 performs only Loyd moves,
the resulting algorithm for $\config$ 
swaps the hole with tiles at a distance either $1$ or $3$ 
from it, that is, it performs only NL moves. 

{\bf Case 3: swapping the hole with a tile not on the same row or
next row up.}
Now we consider the situation not covered in Case 1 or Case 2. 
The cases where tile $T$ is 
in the column 
to the immediate right of the hole or
in the same column as the hole are similar to above,
so assume neither of these situations hold,
as in 
Figure \ref{figh3}.
Let $\config$ be the configuration shown in Figure \ref{figh3}
and let $\config'$ be the configuration shown in Figure
\ref{figh4}. Let $f$ be the bijection from locations
in $\config$ to locations in $\config'$ that leaves the 
horizontal part unchanged and rotates and inverts the vertical 
part (which consists of locations in the column of $T$ and in the 
column one unit to the left of $T$) 
so that the location of tile $T$ is sent to 
the row second from the bottom.
Since in $\config'$ tile $T$ is in the row second from the bottom, we
can use the algorithm for Case 1 to swap the hole 
with tile $T$, using only Loyd moves, 
starting from configuration $\config'$. 
As before, we can use the labels of the tiles moved at each 
step to define an algorithm starting from configuration $\config$.
Note that if positions $x$ and $y$ are adjacent in $\config'$
then $f^{-1}(x)$ and $f^{-1}(y)$ are at distance 
$1$ or $3$ from each other in $\config$.
It follows that the algorithm for configuration $\config$ 
swaps the hole with tiles at a distance $1$ or $3$ 
from it, that is, performs only NL moves. 

Note that the maximum length of
the representation of a PC move using NL moves 
is at most $B n$, for a universal constant $B$.
This also applies 
to the resulting representation using Loyd moves.
 
We apply Corollary \ref{lscomp} again, 
this time letting $\pt$ (respectively, $p$)
be the measure corresponding 
to the PC chain (respectively, Loyd chain). 
We need to bound the quantity
\[
A = \max_{z}
{1 \over p(z)} \e\left( N(Y,z) |Y| \right),
\]
where $Y$ is chosen according to $\pt$. Since $N(Y, z) \leq |Y| \leq Bn$, 
and $p(z) = 1/4$ 
for $z \in \{\leftarrow, \rightarrow, \uparrow, \downarrow\}$, 
we have
\[
A \leq C n^2
\]
for a universal constant $C$, and hence
\begin{equation}
\label{pcnl}
\alpha_{\rm PC} \leq C n^2 \alpha_{\rm Loyd}.
\end{equation}

\begin{figure}[H]
\begin{tikzpicture} 
  \foreach \y in {0,2,4}
  \draw [fill = gray] (\y, 0) rectangle (\y + 1, 1);
  \foreach \y in {1,3}
  \draw [fill = gray] (\y, 1) rectangle (\y + 1, 2);
  \foreach \y in {1,3}
  \draw [fill = white] (\y, 0) rectangle (\y + 1, 1);
  \foreach \y in {0,2,4}
  \draw [fill = white] (\y, 1) rectangle (\y + 1, 2);

\foreach \y in {2,4}
\draw (5, \y) rectangle (6, \y + 1);

\foreach \y in {3, 5}
\draw (6, \y) rectangle (7, \y + 1);

\foreach \y in {3,5}
\draw [fill = gray] (5, \y) rectangle (6, \y + 1);

\foreach \y in {2,4}
\draw [fill = gray] (6, \y) rectangle (7, \y + 1);

\node at (6.5, 5.5) {$T$};
\node at (6.5, 4.5) {15};
\node at (6.5, 3.5) {13};
\node at (6.5, 2.5) {11};

\node at (5.5, 5.5) {16};
\node at (5.5, 4.5) {14};
\node at (5.5, 3.5) {12};
\node at (5.5, 2.5) {10};

\node at (.5,.5) {h};
\node at (1.5,.5) {1};
\node at (2.5,.5) {2};
\node at (3.5,.5) {3};
\node at (4.5,.5) {4};

\node at (.5,1.5) {5};
\node at (1.5,1.5) {6};
\node at (2.5,1.5) {7};
\node at (3.5,1.5) {8};
\node at (4.5,1.5) {9};

\end{tikzpicture}
\caption{}\label{figh3}
\end{figure}

\begin{figure}[H]
\begin{tikzpicture} 
  \foreach \y in {0,2,4,6,8}
  \draw [fill = gray] (\y, 0) rectangle (\y + 1, 1);
  \foreach \y in {1,3,5,7}
  \draw [fill = gray] (\y, 1) rectangle (\y + 1, 2);
  \foreach \y in {1,3,5,7}
  \draw [fill = white] (\y, 0) rectangle (\y + 1, 1);
  \foreach \y in {0,2,4,6,8}
  \draw [fill = white] (\y, 1) rectangle (\y + 1, 2);
\node at (.5,.5) {h};

\node at (8.5, 1.5) {$T$};
\node at (7.5, 1.5) {15};
\node at (6.5, 1.5) {13};
\node at (5.5, 1.5) {11};

\node at (8.5, .5) {16};
\node at (7.5, .5) {14};
\node at (6.5, .5) {12};
\node at (5.5, .5) {10};

\node at (.5,.5) {h};
\node at (1.5,.5) {1};
\node at (2.5,.5) {2};
\node at (3.5,.5) {3};
\node at (4.5,.5) {4};

\node at (.5,1.5) {5};
\node at (1.5,1.5) {6};
\node at (2.5,1.5) {7};
\node at (3.5,1.5) {8};
\node at (4.5,1.5) {9};

\node at (8.5, 1.5) {$T$};
\draw [very thick] (5,0) -- (5,2);
\end{tikzpicture}
\caption{}\label{figh4}
\end{figure}

\end{proof}

\begin{lemma}
\label{lhc}
Suppose that $n$ is odd. 
Then the log Sobolev constants $\alpha_{\rm Loyd}$ and 
$\alpha_{\rm HC}$ satisfy
\[
C n^2 \alpha_{\rm Loyd} \geq \alpha_{\rm HC},
\] 
for a universal constant $C$.
\end{lemma}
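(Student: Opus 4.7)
The plan is to mimic the proof of Lemma \ref{lpc}, with the HC chain playing the role that the PC chain played in the even case. Since $n$ is odd, $G_n$ is non-bipartite and there is no parity restriction on the state space; every swap of the hole with a tile is a legal HC move, so a move of the HC chain has the form $y \in V_n \setminus \{0\}$ with no further constraint.

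First I would reintroduce the near-Loyd (NL) chain from the proof of Lemma \ref{lpc}, in which each move swaps the hole with a tile at $L^1$-distance $1$ or $3$. As noted there, solvability of the fifteen puzzle on a $3 \times 3$ grid implies that any NL move is realized by a bounded number of Loyd moves. Then I would show that an arbitrary HC move can be represented using at most $Bn$ NL moves (for a universal constant $B$) by reusing the Case~1, Case~2 and Case~3 algorithms from the proof of Lemma \ref{lpc} verbatim: Case~1 pushes a tile in the row immediately above the hole to the hole's location by a cyclic sweep in a $2 \times k$ band (and produces Loyd moves directly), while Cases~2 and~3 reduce to Case~1 by a conceptual transposition that turns a Loyd move on a modified configuration into an NL move on the original. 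Composing these two representations gives a representation of each HC move using at most $B' n$ Loyd moves, for a universal constant $B'$.

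I would then apply Corollary \ref{lscomp} with $\pt$ the HC distribution and $p$ the Loyd distribution. Since $p(z) = 1/8$ for each of the four Loyd moves (after accounting for the holding probability $\half$), $|Y| \leq B' n$ almost surely, and $N(Y,z) \leq |Y|$, the random walk comparison gives
\[
A \;=\; \max_z {1 \over p(z)} \, \e\!\left(N(Y,z)\,|Y|\right) \;\leq\; C n^2
\]
for a universal constant $C$, which yields $\alpha_{\rm HC} \leq C n^2 \, \alpha_{\rm Loyd}$ and proves the lemma.

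The main obstacle is to verify that the three case algorithms from Lemma \ref{lpc} indeed transfer to the odd case without change, despite having been stated there for PC moves (which swap the hole only with tiles at odd $L^1$-displacement). The point to check is that none of the constructions uses the parity of the displacement $y$ or of $n$: each case ultimately appeals only to solvability of the fifteen puzzle on a fixed $3 \times 3$ sub-grid, which holds independently of these parities. Once this is noted, the analysis of $A$ in the odd case becomes essentially identical to the PC-to-Loyd step in the proof of Lemma \ref{lpc}.
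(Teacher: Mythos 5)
Your high-level plan (reduce HC moves to NL moves and apply Corollary~\ref{lscomp}) matches the paper's, but the crucial verification step is wrong. You assert that the Case~1, 2, 3 algorithms from the proof of Lemma~\ref{lpc} work ``verbatim'' for any displacement parity, on the grounds that ``each case ultimately appeals only to solvability of the fifteen puzzle on a fixed $3 \times 3$ sub-grid.'' That reading misplaces where the $3 \times 3$ solvability is invoked: it is used only to convert NL moves into Loyd moves. Case~1 itself is an explicit sequence of Loyd moves, and its correctness is parity-sensitive. Concretely, the loop in Step~1 of Case~1 ($\uparrow, \rightarrow, \downarrow, \rightarrow$, repeated) first reaches the tile $T$ at a point in the cycle that depends on the parity of the horizontal offset $y_1$: when $y_1$ is even the hole swaps with $T$ via an $\uparrow$, putting $T$ directly below the hole, whereas when $y_1$ is odd the swap happens via a $\rightarrow$, leaving $T$ in the upper row. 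The fixed Step~2 sequence $\leftarrow, \downarrow, \rightarrow$ only sets up the correct pre-condition for the Step~3 ``push left'' loop in the even-$y_1$ case; for odd $y_1$, Step~3 no longer advances $T$ toward the hole's starting square, and the algorithm fails. In Lemma~\ref{lpc} this never arises because PC moves have $y_1 + y_2$ odd, so with $y_2 = 1$ the offset $y_1$ is always even.

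What is actually needed, and what the paper does, is a reduction of even HC moves to odd ones that explicitly uses $n$ odd. For an even move $y = (y_1, y_2)$ with $n$ odd, the reflected target $(-y_1, y_2)$ (with $-y_1 = n - y_1$ in $\Z_n$) is odd, so the Lemma~\ref{lpc} algorithm applies to it; conjugating the resulting sequence of Loyd moves by the horizontal reflection (interchanging $\leftarrow$ and $\rightarrow$) produces a Loyd sequence that swaps the hole with the tile at offset $y$. Odd moves are already PC moves and are handled directly. Without this reflection argument --- or some other explicit handling of the parity of $y_1$ --- your Case~1 step does not go through, so the bound on $|Y|$ and hence on $A$ is not established.
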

\begin{proof}
The proof follows the proof of Lemma \ref{lpc} closely.
We will show how to represent any HC move using Loyd moves. 
Consider a move $y = (y_1, y_2)$ of the HC chain. If $y$ is odd then it is 
also a PC move and hence we can represent it using Loyd  moves
using the algorithm from the proof of Lemma \ref{lpc}.
If $y$ is even, then $(-y_1, y_2)$ is odd, and
we can represent $y$ using Loyd moves as follows:
we  perform the algorithm from the proof of Lemma \ref{lpc} to
swap the hole
with the tile in position $(-y_1, y_2)$, but we interchange the
roles of $\leftarrow$
and $\rightarrow$ moves. The resulting algorithm will swap the hole with 
the tile in position $(y_1, y_2) = y$. 

 We have shown that any HC move can be 
represented by $\bigo(n)$ Loyd moves, so the theorem follows 
by calculations similar to those leading up 
to equation (\ref{pcnl}).
\end{proof}

\section{Lower bound}
\label{lb}
In this section we prove a lower bound on the order of 
$n^4 \log n$ for the mixing time of the Loyd chain.
For the lower bound, a key fact is that
if we look at a tile at times 
when the hole is immediately to its right, the 
$x$-coordinate is doing a random walk on $\Z_n$.
More precisely, let $\{L_t : t \geq 0 \}$ be a Loyd process. 
We write $L_t(s)$ for the position of tile $s$ at time $t$. 
For a configuration $L$ and tile $s$ let 
$X(L,s)$ denote the $x$-coordinate of tile $s$ in configuration $L$,
and define $X_t(s) := X(L_t, s)$.  
Define $\taut_1(s), \taut_2(s), \dots$ inductively as follows.
Let $\taut_1(s)$ 
be the first time $t$ 
such that the hole is immediately to the right of tile $s$ at time $t$,
and for $k > 1$, let $\taut_k(s)$ be the first time $t > \taut_{k-1}(s)$
such that the hole is immediately to the right of tile $s$ at time $t$. 
The process
$\{X_{\taut_k(s)}(s) : k \geq 0\}$ is a symmetric random walk on $\z_n$,
which we shall call the {\it $s$ random walk}.
To see this, note that if $m_1 m_2 \cdots m_l$ is a sequence of moves
between times $\taut_1(s)$ and $\tau_2(s)$ that changes 
$X_t(s)$ from $x$ to $x + 1$ ($\bmod\;  n$), then the 
sequence of moves $m_l^{-1}, m_{l-1}^{-1}, \dots, m_1^{-1}$,
which occurs with the same probability, would 
change $x$ to $x-1$ ($\bmod \;n$) over the same time interval.
Note that each step of the $s$ random walk has a positive holding probability,
which is the probability that between times $\taut_k(s)$ 
and $\taut_{k+1}(s)$ 
the 
value of $X_t(s)$ does not change. 

Recall that for simple symmetric random walk
on a cycle of length $n$,
$f(x) = \cos { 2 \pi x \over n} $ is an eigenfunction 
with 
corresponding eigenvalue $\cos { 2 \pi\over n} $. 
Thus $f$ is an eigenfunction for the $s$ random walk as well.
Since the $s$ random walk has a holding probability the corresponding 
eigenvalue $\lambda > \cos { 2 \pi\over n} $. 

The rough idea behind the lower 
bound will be to show that the tiles that start 
with an $x$-coordinate close to $0$ will 
tend to stay that way 
if the number of random walk steps is too low. 
Let $S$ be the set of tiles $s$
such that $f(X_0(s)) > 1/2$.
and suppose that 
the hole is not initially adjacent to any tile in $S$.  
Let $\mu$  be large enough so that
\begin{equation}
\label{mudef}
\left( \cos {2\pi \over n} \right)^{n^2} \geq e^{-\mu},
\end{equation}
for all $n \geq 2$. (Such a $\mu$ exists because 
$\cos x$ has the power series expansion $1 - {x^2 \over 2!}
+ {x^4 \over 4!} - \cdots$.) Next,  define
\begin{equation}  
\label{epsdef}
\epsilon = \sfrac{1}{8} \mu^{-1}; \;\;\;\;\;\;\;\;\;
\th = \lfloor 1 + \epsilon n^2 \log n \rfloor 
;\;\;\; \;\;\;\;\;\; T = (n^2 - 1) \th. 
\end{equation}

Since there are $n^2 - 1$ tiles, we can think of the quantity
$\th$ as the typical number of
times that the hole has been to the immediate right of any given tile 
if the Loyd 
process has made $T$ steps. 

We shall bound the 
mixing time from below by $T$, 
which is on the order of $n^4 \log n$. 
We accomplish this using as a 
{\it distinguishing statistic} the random 
variable $\wdist$ defined by
\[
\wdist = \sum_{s \in S} f(X_T(s)).
\]

Let $k = |S|$ and let $W$ be the sum of $k$ samples without replacement
from a population consisting of values of $\cos {2 \pi x \over n}$
for vertices $(x, y) \in V_n$.
The lower 
bound follows from Lemmas \ref{lemmaa} and \ref{lemmab} 
below, which together imply that $\Vert   \wdist - W \Vert_{TV} 
\to 1$ as $n \to \infty$.  
In the statements of 
Lemmas \ref{lemmaa} and \ref{lemmab},  
the random variables 
depend implicitly on the parameter $n$ of the
Loyd process.

\begin{alemma}
\label{lemmaa}
There is a universal constant $c > 0$ such that 
\[
\P( \wdist > c n^{15/8} ) \to 1,
\]
as $n \to \infty$. 
\end{alemma}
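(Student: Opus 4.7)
My strategy is the second-moment method. I would show that $\E[\wdist] \geq c_1 n^{15/8}$ and $\Var(\wdist) = o(n^{15/4})$, and then conclude by Chebyshev's inequality that $\wdist > c\, n^{15/8}$ with probability tending to $1$ (taking $c = c_1/2$).

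\textbf{Expectation.} Fix $s \in S$. The excerpt already observes that $\{X_{\taut_k(s)}(s)\}_{k \ge 0}$ is a symmetric random walk on $\z_n$ (with a positive holding probability) having $f$ as an eigenfunction of eigenvalue $\lambda \ge \cos(2\pi/n)$. Iterating the eigen-identity yields
\[
\E[f(X_{\taut_k(s)}(s))] = \lambda^k\, f(X_0(s)),
\]
the $k=0$ case being clean because the hole is not adjacent to any tile in $S$ at time $0$. To transfer this walk-indexed identity to the deterministic time $T$, set $N_T(s) := \#\{k \ge 1 : \taut_k(s) \le T\}$. On the interval $(\taut_{N_T(s)}(s), T]$ the hole is never immediately right of $s$, so a path-reversal argument of the same flavor as the one that establishes symmetry of the $s$-random walk gives
\[
\E[f(X_T(s))] \;\ge\; \E\bigl[\lambda^{N_T(s)}\bigr]\, f(X_0(s)) \;-\; o(n^{-1/8}).
\]
By convexity of $x \mapsto \lambda^x$ and Jensen, $\E[\lambda^{N_T(s)}] \ge \lambda^{\E N_T(s)}$; since the hole's relative position with respect to $s$ is uniform over the $n^2-1$ nonzero displacements in stationarity, one expects $\E N_T(s) \le \th$ up to lower-order boundary errors, and hence $\lambda^{\E N_T(s)} \ge (\cos(2\pi/n))^{\th} \ge e^{-\mu \th / n^2} \ge n^{-1/8}$, using (\ref{mudef})--(\ref{epsdef}). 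Because $f(X_0(s)) > 1/2$ for $s \in S$ and a positive fraction of lattice sites have $\cos(2\pi x/n) > 1/2$ (so $|S| \ge c_0 n^2$ for a universal $c_0 > 0$), summing over $s$ gives $\E[\wdist] \ge c_1 n^{15/8}$.

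\textbf{Variance.} Decompose
\[
\Var(\wdist) = \sum_{s \in S} \Var\bigl(f(X_T(s))\bigr) + \sum_{s \neq s'} \Cov\bigl(f(X_T(s)), f(X_T(s'))\bigr).
\]
The diagonal sum is at most $|S| \le n^2$ because $|f| \le 1$. For the off-diagonal covariances I would apply the same walk-indexing and eigenfunction machinery to pairs $(s,s')$: distinct tiles interact only via the hole, and their $x$-coordinates should decorrelate fast enough for the total covariance contribution to be $o(n^{15/4})$, which is well below $\E[\wdist]^2$.

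\textbf{Main obstacle.} The hardest step is the transfer of the walk-indexed identity to the time-indexed expectation. $N_T(s)$ is not a stopping time for the $s$-random walk in the usual sense, and each Loyd move perturbs both the hole and a neighboring tile simultaneously, so the naive expansion of $\E[f(X_T(s))]$ has cross-terms from every step and every other tile. As the introduction highlights, the lower bound relies on \emph{surprising cancellations}; isolating them and certifying that the error between $\E[f(X_T(s))]$ and $\E[\lambda^{N_T(s)}]\, f(X_0(s))$ is genuinely $o(n^{-1/8})$ is the central technical challenge.
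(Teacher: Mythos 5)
Your high-level strategy (lower-bound the expectation, show concentration, conclude) matches the shape of the paper's argument, but two of your key steps contain genuine gaps, and the paper's actual argument is structured precisely to avoid them.

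\textbf{The expectation step.} Your proposed transfer inequality
\[
\E[f(X_T(s))] \;\ge\; \E\bigl[\lambda^{N_T(s)}\bigr]\, f(X_0(s)) \;-\; o(n^{-1/8})
\]
is not established and is the crux of the difficulty, not a side issue. The eigenfunction identity $\E[f(X_{\tau_k(s)}(s))] = \lambda^{\,k-1} f(X_{\tau_1(s)}(s))$ holds for \emph{deterministic} indices $k$; here $N_T(s)$ is random and correlated with the increments of the $s$-walk (long excursions of the hole produce fewer walk steps, and the direction of the walk is determined by the same randomness). A naive conditioning on $N_T(s)$ and application of Jensen does not commute with this dependence, and the "path-reversal" heuristic you invoke is precisely the kind of statement that needs a real proof. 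The paper circumvents this entirely: it defines $Z := \sum_{s\in S} f(X_{\tau_{\th}(s)}(s))$ with the \emph{fixed} index $\th$, where the eigenfunction computation is clean, and then controls $|\wdist - Z|$ in probability via Proposition~\ref{lemma1}, whose proof rests on a variance bound for $\nt_T(s)$ (Lemmas~\ref{elemma}--\ref{varlemma}) and a Hoeffding-type modulus-of-continuity bound for the random walk increments (Lemma~\ref{devlemma}). Your one-line reduction skips all of this.

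\textbf{The variance step.} The claim $\Var(\wdist) = o(n^{15/4})$ via "decorrelation" of tile trajectories is unsubstantiated and is where the real work lies. The tiles are strongly correlated: they all move through the single shared hole, and controlling off-diagonal covariances directly is not obviously tractable. The paper instead proves concentration of $Z$ (Proposition~\ref{lemma2}) by the method of bounded differences: Lemma~\ref{bd} shows each Loyd step changes $\E(Z\mid\f_t)$ by at most $D\log n / n$, via a delicate column-by-column reflection coupling of the two history processes, and then Azuma--Hoeffding gives concentration. This coupling is the technical heart of the lower bound and is not captured by "decorrelate fast enough."

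In short: the paper's introduction of the walk-indexed quantity $Z$ and the bounded-differences coupling are not cosmetic choices but are exactly what makes the argument go through. Your proposal, as written, replaces both with unproven assertions.
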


\begin{alemma}
\label{lemmab}
For any $c > 0$ we have
\[
\P(W > c n^{15/8} ) \to 0,
\]
as $n \to \infty$. 
\end{alemma}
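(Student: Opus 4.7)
The plan would be a routine second-moment argument. Since $W$ has mean zero and standard deviation of order $n$, while the threshold $cn^{15/8}$ is of strictly larger order, Chebyshev's inequality is more than enough. There is no need for sharper concentration (e.g.\ Hoeffding for samples without replacement), though one could certainly use it.

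First I would compute $\E W$. The population consists of the $n^2$ values $\cos(2\pi x/n)$ as $(x,y)$ ranges over $V_n$, and its total is
\[
\sum_{(x,y) \in V_n} \cos(2\pi x/n) \;=\; n\sum_{x=0}^{n-1}\cos(2\pi x/n) \;=\; 0,
\]
by the standard vanishing of a nontrivial geometric sum (valid for $n \geq 2$). Hence the population mean is zero, so $\E W = k \cdot 0 = 0$ regardless of $k$.

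Next I would bound $\Var(W)$. For a sum of $k$ samples drawn without replacement from a population of size $N$ with (biased) population variance $\sigma^2$, the classical finite-population formula yields
\[
\Var(W) \;=\; k\sigma^2\,\frac{N-k}{N-1} \;\leq\; N\sigma^2.
\]
Using $\cos^2\theta = (1+\cos 2\theta)/2$ together with the vanishing of the geometric sum $\sum_{x=0}^{n-1}\cos(4\pi x/n)$ (which holds for $n\geq 3$, and is trivial otherwise), one finds $\sigma^2 = \tfrac{1}{2}$, so $\Var(W) \leq n^2/2$. Chebyshev's inequality then gives
\[
\P(W > cn^{15/8}) \;\leq\; \frac{\Var(W)}{c^2 n^{15/4}} \;\leq\; \frac{1}{2c^2 n^{7/4}},
\]
which tends to $0$ as $n\to\infty$.

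There is essentially no substantive obstacle; the lemma reduces to the variance computation together with Chebyshev. The only point requiring any care is the hypergeometric variance formula for sampling without replacement, which is classical and can be derived in a few lines by computing $\Cov(X_i,X_j)$ for distinct samples.
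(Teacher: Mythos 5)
Your proof is correct, but it takes a genuinely different route from the paper. The paper applies Hoeffding's tail inequality for sampling without replacement (its Theorem \ref{hoeffdinga}), giving a bound of the form $\exp(-n^{15/4}/2k) \le \exp(-n^{7/4}/2)$, whereas you compute the hypergeometric mean and variance explicitly and finish with Chebyshev, getting a bound of order $n^{-7/4}$. Your verification that the population mean is $0$ (a step the paper glosses over, though it is also implicitly needed to invoke its Theorem \ref{hoeffdinga}) and that $\sigma^2 = \tfrac{1}{2}$ for $n\ge 3$ is correct, as is the crude bound $k\sigma^2\frac{N-k}{N-1}\le N\sigma^2$. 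The trade-off is that Hoeffding gives an exponentially small tail with no variance computation, while your argument is more elementary but yields only a polynomial rate; since the lemma only asserts convergence to $0$, either suffices. Your version also has the small advantage of handling the arbitrary constant $c>0$ cleanly, whereas the paper's displayed computation silently takes $c=1$.
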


\begin{theorem}
\label{mainlower}
Let $L_t$ be the Loyd process on $G_n$, and let $\pi$ be the 
stationary distribution. There is a universal constant $c > 0$ 
such that for any $\epsilon > 0$,
when $n$ is sufficiently large, we have
\[
\tmix(\epsilon) > c n^4 \log n.
\]
\end{theorem}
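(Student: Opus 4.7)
My plan is to carry out the distinguishing-statistic argument that the authors have set up. I take $L_0$ to be any initial configuration satisfying the two standing assumptions (hole not adjacent to any tile of $S$; in $\Omega$ when $n$ is even), which is easy to arrange since $S$ contains only tiles with starting $x$-coordinate near $0$. Write $\mu_T$ for the law of $L_T$ and $\pi$ for the stationary distribution; the strategy is to use $\wdist$ as a one-dimensional test statistic, show it is large under $\mu_T$ and small under $\pi$, and deduce that $|| \mu_T - \pi ||$ is close to $1$.

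The key identification is that under $\pi$ the statistic $\wdist$ has exactly the distribution of $W$. Since $\pi$ is uniform on the state space, the positions of the $k$ tiles of $S$ form a uniformly random $k$-subset of $V_n$. This is immediate when $n$ is odd, and still holds when $n$ is even: for any target $A \subset V_n$ with $|A| = k$ and any placement of the hole at $h \notin A$, a transposition of two non-$S$ non-hole particles flips the parity of the full permutation without moving the hole or the tiles of $S$, so the $(n^2 - k - 1)!$ completions split evenly between the two parity classes. Summing over $h$ gives $k!(n^2-k)!/2$ configurations in $\Omega$ that place $S$ in $A$, hence probability $1/\binom{n^2}{k}$. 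Consequently $\sum_{s \in S} f(X(s))$ evaluated at a $\pi$-random configuration has the law of $W$.

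With this identification in hand, the proof assembles as follows. Lemma~\ref{lemmaa} yields $\P_{\mu_T}(\wdist > c n^{15/8}) \to 1$, while Lemma~\ref{lemmab} yields $\P_{\pi}(\wdist > c n^{15/8}) = \P(W > c n^{15/8}) \to 0$. Since total variation contracts under functions,
\[
|| \mu_T - \pi || \;\geq\; \tfrac{1}{2} \sum_{x} \bigl| \P_{\mu_T}(\wdist = x) - \P_\pi(\wdist = x) \bigr| \;\to\; 1
\]
as $n \to \infty$. For any fixed $\epsilon \in (0,1)$ and all sufficiently large $n$ this gives $|| \mu_T - \pi || > \epsilon$, hence $\tmix(\epsilon) > T$. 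Since $T = (n^2 - 1)\th \geq (8\mu)^{-1}(n^2-1)n^2 \log n$, we obtain $\tmix(\epsilon) > c n^4 \log n$ for any fixed $c < (8\mu)^{-1}$ once $n$ is large, yielding the universal constant claimed.

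The theorem proper is not where the real difficulty lies: the assembly above takes only a few lines once Lemmas~\ref{lemmaa} and \ref{lemmab} are granted. The genuine obstacle is Lemma~\ref{lemmaa}, which requires controlling $\E(\wdist)$ at time $T$ despite the fact that each Loyd move updates the conditional distribution of \emph{every} tile, not merely a bounded number. This is what puts the Loyd chain outside the direct reach of Wilson's method, and (as flagged in the Introduction) it is there that the ``surprising cancellations'' have to be extracted from the dynamics.
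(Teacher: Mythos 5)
Your proof is correct and takes essentially the same route as the paper's, which assembles Lemmas A and B in two lines. The useful extra content you supply is the explicit verification (including the parity-transposition count for even $n$) that the positions of the $S$-tiles under the stationary distribution form a uniform $k$-subset of $V_n$, so that $\wdist$ under $\pi$ really has the law of $W$; the paper leaves this implicit.
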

\begin{proof}
Lemmas \ref{lemmaa} and \ref{lemmab} 
together imply that $\Vert   \wdist - W \Vert_{TV} 
\to 1$ as $n \to \infty$.  This implies the Theorem 
since $\wdist$ is measurable with respect to $L_T$ and
$T \geq c n^4 \log n$ for a universal constant $c > 0$. 
\end{proof}

We prove Lemma \ref{lemmaa} in subsection \ref{lap}.
Lemma \ref{lemmab} is a straightforward consequence of 
Hoeffding's bounds for sampling without replacement in \cite{H}, 
which we recall now.
\begin{theorem}
\label{hoeffdinga}
Let $X_1, \dots, X_k$ be samples, without replacement, from a population
whose values are in the interval $[a,b]$, and suppose that
the population mean $\e(X_1) = 0$. 

Then for $\alpha>0$,
\begin{equation}
\label{hoeffding}
\P\left( \sum_{i=1}^k X_i \geq \alpha\right) \leq e^{-2 \alpha^2/k(b-a)^2}.
\end{equation}
\end{theorem}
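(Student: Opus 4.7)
The plan is to follow Hoeffding's original argument, which reduces the without-replacement case to the with-replacement case and then applies the standard bounded-increment Chernoff estimate. First, I would apply the exponential Markov inequality: for any $t > 0$,
\[
\P\Bigl(\textstyle\sum_{i=1}^k X_i \geq \alpha\Bigr) \leq e^{-t\alpha}\, \e\bigl(e^{t \sum_{i=1}^k X_i}\bigr).
\]
So the problem reduces to bounding the moment generating function of $\sum X_i$ where the $X_i$ are drawn without replacement from a population with mean zero and range $[a,b]$.

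The key step, and the main obstacle, is Hoeffding's convex dominance lemma: if $Y_1,\dots,Y_k$ are i.i.d.\ samples drawn \emph{with} replacement from the same population, then for every continuous convex function $\phi$,
\[
\e\,\phi\Bigl(\textstyle\sum_{i=1}^k X_i\Bigr) \;\leq\; \e\,\phi\Bigl(\textstyle\sum_{i=1}^k Y_i\Bigr).
\]
I would prove this by exchangeability: conditional on the multiset of sampled values, the with-replacement sum can be written as an average of without-replacement sums obtained by restricting to distinct-index subsequences, and Jensen's inequality then gives the domination. Applied to $\phi(x)=e^{tx}$, this yields
\[
\e\bigl(e^{t\sum X_i}\bigr) \leq \e\bigl(e^{t\sum Y_i}\bigr) = \bigl(\e\, e^{tY_1}\bigr)^k.
\]

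At this point the problem is reduced to the i.i.d.\ setting with mean-zero summands bounded in $[a,b]$. Here I would invoke Hoeffding's lemma for a single bounded, mean-zero random variable,
\[
\e\, e^{tY_1} \leq e^{t^2(b-a)^2/8},
\]
proved by taking the logarithm, computing the first two derivatives in $t$, and bounding the variance of a $[a,b]$-valued distribution by $(b-a)^2/4$ (a one-line convexity estimate). Combining the three displays gives
\[
\P\Bigl(\textstyle\sum X_i \geq \alpha\Bigr) \leq \exp\bigl(-t\alpha + k t^2 (b-a)^2/8\bigr).
\]

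Finally, optimizing in $t$ by setting $t = 4\alpha / \bigl(k(b-a)^2\bigr)$ yields the claimed bound $\exp\bigl(-2\alpha^2/k(b-a)^2\bigr)$. The only nontrivial ingredient is the convex-domination step, which is the reason the without-replacement bound matches the with-replacement one; the rest is the standard Chernoff optimization.
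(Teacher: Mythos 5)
The paper does not prove Theorem~\ref{hoeffdinga}: it is quoted directly from Hoeffding \cite{H} and invoked as a known result, so there is no in-paper argument to compare against. Your proposal reconstructs Hoeffding's original 1963 argument, and the overall architecture is right: exponential Markov, reduction of the without-replacement moment generating function to the with-replacement (i.i.d.) one via Hoeffding's convex-ordering lemma, the single-variable bound $\e\, e^{tY_1}\le e^{t^2(b-a)^2/8}$ for a bounded mean-zero variable, and the Chernoff optimization $t=4\alpha/(k(b-a)^2)$, which indeed produces $\exp(-2\alpha^2/(k(b-a)^2))$.

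However, your sketch of the convex-ordering step --- the one you correctly identify as the crux --- points Jensen the wrong way. To conclude $\e\,\phi(\sum X_i)\le\e\,\phi(\sum Y_i)$ one needs a coupling and a $\sigma$-field $\mathcal{G}$ with $\sum X_i=\e\bigl[\sum Y_i\mid\mathcal{G}\bigr]$, so that conditional Jensen gives $\phi(\sum X_i)\le\e\bigl[\phi(\sum Y_i)\mid\mathcal{G}\bigr]$ and hence the desired inequality after taking expectations. Your phrase ``the with-replacement sum can be written as an average of without-replacement sums'' asserts instead that $\sum Y_i$ is a conditional expectation (or convex combination) of $X$-type sums; Jensen applied to that yields $\e\,\phi(\sum Y_i)\le\e\,\phi(\sum X_i)$, the reverse of what is needed. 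Exhibiting the correct conditional-expectation representation of the without-replacement sum in terms of the with-replacement one is the genuinely nontrivial content of Hoeffding's Theorem~4; as written, that step of your sketch would have to be repaired or replaced by an explicit citation, though the rest of the assembly is sound.
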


\begin{proofof}{Proof of Lemma \ref{lemmab}}
Let $k = |S|$. Applying Theorem \ref{hoeffdinga} to 
$k$ 
samples 
from a population  consisting of values of $\cos {2 \pi x \over n}$
for vertices $(x, y) \in V_n$ gives 
\begin{equation}
\label{hoeffup}
\P\left( \sum_{i = 1}^k X_i \geq n^{15/8} \right) \leq 
\exp\left(-n^{15/4} / 2 k \right).
\end{equation}
Since $k \leq n^2$, the quantity (\ref{hoeffup}) converges to $0$ 
as $n \to \infty$.
\end{proofof}

\subsection{Proof of Lemma \ref{lemmaa}}
\label{lap}
For $s \in S$, let $\nt_t(s)$ be the number of 
times that the hole has been to the immediate right of tile $s$, 
up to time $t$. 
Note that for all $t$, if $\nt_t(s) > 0$ then 
\[
\taut_{\nt_t(s)}(s) \leq t < \taut_{\nt_t(s) + 1}(s).
\] 
Recall that $f(x)  = \cos {2 \pi x \over n}$.
It follows that $f'(x) = - {2 \pi \over n} 
\sin {2 \pi x \over n}$
and hence $|f'(x)| \leq {2 \pi \over n}$ for all $x$.
Thus 
the mean value theorem implies that
for every $x$ and $k$ we have
\begin{equation}
\label{mvt}
|f( x + k) - f(x) | \leq {2 \pi |k| \over n}. 
\end{equation}

We will prove Lemma \ref{lemmaa} by approximating $\wdist$ by the
random variable $Z := \sum_{s \in S} X_{\tau_\th(s)}(s)$.
The random variable $Z$ is easier to analyze
than $\wdist$
(but couldn't be used as a distinguishing statistic itself
because it is not measurable with respect to $L_t$ for any $t$).
For the proof of Lemma \ref{lemmaa}
we will 
need the following propositions.
\begin{proposition} 
\label{lemma1}
For any $b > 0$ we have
\[
\P \left( 
\left| \sum_{s \in S} 
f( X_{\tau_{\nt_T}(s)}(s)) - Z \right|
> b n^{7/4}  \right) \to 0
\]
as $n \to \infty$. 
\end{proposition}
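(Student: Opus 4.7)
The plan is to control each term in the sum using the Lipschitz estimate (\ref{mvt}) for $f$, and then bound the total via Markov's inequality. Extend the Loyd process past time $T$ if necessary so that $\taut_\th(s)$ is defined for every $s$, and set $\Delta_s := \nt_T(s) - \th$. Then $X_{\taut_{\nt_T(s)}(s)}(s)$ and $X_{\taut_\th(s)}(s)$ are two values of the $s$-random walk, separated by $|\Delta_s|$ steps, and (\ref{mvt}) gives
\[
\bigl|f(X_{\taut_{\nt_T(s)}(s)}(s)) - f(X_{\taut_\th(s)}(s))\bigr| \;\leq\; \frac{2\pi}{n}\,\bigl|X_{\taut_{\nt_T(s)}(s)}(s) - X_{\taut_\th(s)}(s)\bigr|.
\]
By Markov applied to the sum of absolute values, it therefore suffices to show
\[
\sum_{s \in S} \e \bigl|X_{\taut_{\nt_T(s)}(s)}(s) - X_{\taut_\th(s)}(s)\bigr| \;=\; o(n^{11/4}).
\]

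Two concentration estimates drive the analysis. First, the relative position $L_t(h) - L_t(s) \in V_n \setminus \{0\}$ is itself a Markov chain (its transitions depend only on the current value and on the move chosen), and a short computation shows that it is stationary under the uniform distribution on $V_n \setminus \{0\}$; in particular its mean return time to $(1,0)$ is $n^2-1$, so in time $T = (n^2-1)\th$ one expects exactly $\th$ returns, and a standard renewal second-moment bound yields $\e \Delta_s^2 = O(\th \cdot \mathrm{polylog}(n))$. Second, the $s$-random walk has mean-zero, symmetric increments with bounded variance (each swap of the hole with $s$ moves $s$ by one unit in a single coordinate, and the number of such swaps per inter-arrival has a geometric-type tail), so its net integer displacement over $k$ steps has $L^2$-norm $O(\sqrt{k})$. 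Conditioning on $|\Delta_s|$ and using $\e\sqrt{|\Delta_s|} \leq (\e\Delta_s^2)^{1/4}$ (Jensen plus Cauchy--Schwarz) gives
\[
\e \bigl|X_{\taut_{\nt_T(s)}(s)}(s) - X_{\taut_\th(s)}(s)\bigr| \;\leq\; C\,(\e\Delta_s^2)^{1/4} \;=\; O\!\bigl(n^{1/2}\cdot \mathrm{polylog}(n)\bigr).
\]
Summing over the $|S| \leq n^2$ tiles yields a total expected $x$-displacement of order $n^{5/2}\cdot \mathrm{polylog}(n)$, which is $o(n^{11/4})$, and the claim follows.

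The main obstacle will be making the first estimate $\e\Delta_s^2 = O(\th \cdot \mathrm{polylog}(n))$ rigorous: although the relative chain is close to simple random walk on $\z_n^2$, its return-time distribution to $(1,0)$ is heavy-tailed in dimension two, so a plain variance bound is not enough and one must invoke a spectral-gap estimate for the relative chain to control the renewal fluctuations. A secondary technical point is that tiles with $\nt_T(s) < \th$ force $\taut_\th(s) > T$; this is harmless since the Loyd process is time-homogeneous and the relative chain retains the same stationary behaviour in the extension past $T$.
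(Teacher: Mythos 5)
Your overall strategy matches the paper's: use the Lipschitz estimate \eqref{mvt} to reduce the proposition to bounding $\sum_{s\in S}\e\bigl|X_{\taut_{\nt_T(s)}(s)}(s)-X_{\taut_\th(s)}(s)\bigr|$, obtain a second-moment bound on $\Delta_s := \nt_T(s)-\th$, and combine these to get a total expected displacement of $o(n^{11/4})$. However, the central step is not justified and this is a genuine gap, not a technicality.

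The problematic move is the one you summarize as ``Conditioning on $|\Delta_s|$ and using $\e\sqrt{|\Delta_s|}\le(\e\Delta_s^2)^{1/4}$.'' This requires that, conditionally on $\Delta_s = k$, the $s$-random walk displacement $S_{\th+k}-S_\th$ still behaves like a $k$-step mean-zero walk with bounded increments, so that its conditional $L^1$-norm is $O(\sqrt{k})$. But $\nt_T(s)$ is determined by the trajectory of the hole relative to tile $s$ up to time $T$, and that same trajectory is exactly what generates the increments of the $s$-random walk. The two are tightly correlated, and $\nt_T(s)$ is not even a stopping time for the walk $(S_k)_k$ (it depends on the hole's future after $\taut_k$). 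Nothing in your argument rules out the conditional law of the increments being badly skewed or having larger fluctuations on the event $\{\Delta_s = k\}$. The paper avoids this problem precisely by replacing the conditional bound with a \emph{uniform} one: its Lemma \ref{devlemma} controls $M_n := \max_{s,t}|S_t-S_s|/|t-s|^\beta$ over all index pairs at once, so that the pathwise inequality $|S_{\nt_T}-S_\th|\le M_n|\Delta_s|^\beta + \sqrt{n}\mathbf{1}(|\Delta_s|\le\sqrt n)$ holds without any conditioning, and then H\"older's inequality decouples $M_n$ from $\Delta_s$ when taking expectations. You would need an analogous uniform deviation device to make your step rigorous.

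A secondary point: the obstacle you flag --- making $\e\Delta_s^2 = O(\th\cdot\mathrm{polylog}(n))$ rigorous --- is real but is resolved in the paper not by a spectral-gap argument but by a heat-kernel decay bound $|p^t(x,y)-\pi(y)|\le A/t$ for random walk on the modified torus $\gt_n$ (Lemma \ref{hkbound}), which directly controls both $\e(\nt_t(s))$ (Lemma \ref{elemma}) and $\var(\nt_t(s))$ (Lemma \ref{varlemma}) through the covariance sum $\sum_{i<j}\cov(I_i,I_j)$. Your numerics ($n^{5/2}\cdot\mathrm{polylog}(n)=o(n^{11/4})$) are consistent with the paper's room to spare, so once the conditioning gap is fixed the rest of the plan does go through.
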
 
\begin{proposition}
\label{lemma2}
For any $b > 0$ we have
\[
\P \left(  \left|
Z - 
\e(Z)  \right| > b n^{7/4}
\right) \to 0
\]
as $n \to \infty$.
\end{proposition}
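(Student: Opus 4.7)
The plan is Chebyshev: it suffices to prove $\var(Z) = \lito(n^{7/2})$, since then
$$\P\bigl(|Z - \e(Z)| > b n^{7/4}\bigr) \leq \var(Z) / (b^2 n^{7/2}) \to 0.$$
Writing $Y_s := f(X_{\taut_\th(s)}(s))$ so that $Z = \sum_{s \in S} Y_s$, decompose
$$\var(Z) = \sum_{s \in S} \var(Y_s) + \sum_{s \neq s' \in S} \cov(Y_s, Y_{s'}).$$
Because $|f| \leq 1$, the diagonal contribution is at most $|S| \leq n^2 = \lito(n^{7/2})$, and the entire task reduces to controlling the off-diagonal covariance sum.

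To exploit the structure of the Loyd process, I would use the eigenfunction property of $f$ noted in Section \ref{lb} to obtain, for each fixed $s$, the martingale-difference decomposition
$$Y_s = \lambda^{\th} f(X_0(s)) + \sum_{k=1}^{\th} \lambda^{\th - k} D_{k,s}, \qquad D_{k,s} := f(X_{\taut_k(s)}(s)) - \lambda \, f(X_{\taut_{k-1}(s)}(s)),$$
with the $\{D_{k,s}\}_k$ orthogonal in $L^2$ for each fixed $s$.  This yields
$$\cov(Y_s, Y_{s'}) = \sum_{k, k' = 1}^{\th} \lambda^{2\th - k - k'} \e\bigl[D_{k,s} D_{k',s'}\bigr].$$
Since $|f'| = \bigo(1/n)$ (as used in the excerpt), the individual increments $D_{k,s}$ are typically of order $1/n$, which reproduces the diagonal bound $\var(Y_s) = \bigo(1)$; the naive bound on the off-diagonal sum, however, gives only $\var(Z) = \bigo(n^4)$, which is too weak by a factor of $\sqrt{n}$.

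The extra saving must come from the ``surprising cancellations'' mentioned in the introduction. Define the random time windows $W_{k,s} := [\taut_{k-1}(s), \taut_k(s)]$. For $s \neq s'$ with $W_{k,s} \cap W_{k',s'} = \emptyset$, I would argue that $\e[D_{k,s} D_{k',s'}] = 0$ by the same move-reversal argument used in Section \ref{lb} to establish symmetry of the $s$-random walk: conditional on the history of the Loyd process outside $W_{k,s}$ together with the unordered multiset of moves inside $W_{k,s}$, the increment $D_{k',s'}$ is measurable, while the involution reversing the order of the moves inside $W_{k,s}$ flips the sign of $D_{k,s}$, so the conditional expectation of the product vanishes. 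Only pairs with overlapping windows then contribute; since at each instant only $\bigo(1)$ tiles are adjacent to the hole and can have an active window, the number of overlapping pairs is much smaller than $|S|^2 \th^2$, and combined with $|D_{k,s}| = \bigo(1/n)$ and the geometric factor $\lambda^{2\th - k - k'}$ the total off-diagonal variance should fit inside the $\lito(n^{7/2})$ budget.

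The hardest step will be making the reversal involution rigorous, because the window endpoints $\taut_k(s)$ are random stopping times of the Loyd process. This forces one to carry out the reversal on a carefully chosen conditional probability space in which the window boundaries are fixed, and then verify that the induced map is measure-preserving. If exact cancellation fails in this setup, a natural fallback is to prove only $|\e[D_{k,s} D_{k',s'}]| = \bigo(n^{-3})$ for disjoint windows---using near-uniform exit distributions of the hole's excursion---and then rely on the geometric decay $\lambda^{2\th - k - k'}$ and the overlap count above to close the estimate.
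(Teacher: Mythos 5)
Your route is genuinely different from the paper's. The paper forms the Doob martingale $M_t = \e(Z \given \f_t)$, proves a bounded-differences estimate $|M_t - M_{t-1}| \leq D\log n/n$ via a coupling of the Loyd process from two neighboring histories (Lemma~\ref{bd}), applies Azuma--Hoeffding at a suitably large time $K = n^5$, and separately bounds $\P(\tau > K)$. You instead propose a direct Chebyshev bound on $\var(Z)$ via a martingale-difference expansion of each $Y_s$ along the $s$-random walk. The comparison below identifies one repairable flaw and one serious gap.

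\textbf{The reversal involution is not needed and does not quite work as stated.} Reversing (and negating) the moves inside $W_{k,s}$ preserves the hole's trajectory as a set and fixes the endpoints of tile $s$, but it does \emph{not} fix the terminal position of tile $s'$: a tile other than $s$ that the hole picks up and drops during the excursion generally ends in a different location under the reversed order of moves. Hence $D_{k',s'}$ is not measurable with respect to ``history outside $W_{k,s}$ plus the unordered multiset of moves inside,'' and the involution does not leave the conditional law of $D_{k',s'}$ invariant. The good news is that your conclusion is salvageable by a cleaner argument that bypasses the involution entirely: on the event $A = \{\taut_k(s) \leq \taut_{k'-1}(s')\}$ (which is $\f_{\taut_{k'-1}(s')}$-measurable), $\1_A D_{k,s}$ is $\f_{\taut_{k'-1}(s')}$-measurable, and $\e\bigl(D_{k',s'} \given \f_{\taut_{k'-1}(s')}\bigr) = 0$ because the $s'$-increment distribution, conditionally on the full Loyd history up to $\taut_{k'-1}(s')$, is the symmetric one-step law of the $s'$-random walk (its law depends on the configuration only through the relative hole position, which is always $(1,0)$ at these times). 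Conditioning accordingly gives $\e(\1_A D_{k,s} D_{k',s'}) = 0$, and symmetrically for the reversed time order. So your disjoint-window cancellation is correct.

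\textbf{The overlapping-window contribution is not small enough under the estimates you sketch.} With $|D_{k,s}| = \bigo(1/n)$, each overlapping pair contributes at most $\bigo(n^{-2})$. For a fixed pair $(s,s')$, the windows $\{W_{k,s}\}_k$ and $\{W_{k',s'}\}_{k'}$ each partition time, so the number of overlapping pairs $(k,k')$ is $\bigo(\th)$; moreover for a typical overlapping pair $k \approx k'$, and the weights $\lambda^{2\th-k-k'}$ sum to $\Theta\bigl(1/(1-\lambda^2)\bigr) = \Theta(n^2)$. That gives an overlapping contribution of order $\bigo(n^{-2}) \cdot \bigo(n^2) = \bigo(1)$ per tile pair, hence $\bigo(n^4)$ total --- not $\lito(n^{7/2})$. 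So even granting the exact cancellation for disjoint windows, you are still a factor of roughly $\sqrt{n}$ short. Your fallback estimate $|\e[D_{k,s}D_{k',s'}]| = \bigo(n^{-3})$ for disjoint windows is also moot, since those terms already vanish, and a similar bound would be needed for \emph{overlapping} windows, where you have offered no mechanism. The missing $\sqrt{n}$ is exactly what the paper's coupling in Lemma~\ref{bd} provides: it shows a single Loyd step perturbs $\e(Z \given \f_t)$ by only $\bigo(\log n/n)$, because the effect on different columns of $S$ decays like $1/(d+1)$ in the distance $d$ from the hole and so is summable, giving a total variance budget $\bigo(n^5 \cdot \log^2 n/n^2) = \bigo(n^3\log^2 n)$. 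To make your approach work you would need an analogous decorrelation statement showing $|\e[D_{k,s} D_{k',s'}]| = o(n^{-5/2})$ on average over overlapping pairs, which does not follow from the $|D| = \bigo(1/n)$ bound alone.
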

We defer the proofs of Propositions \ref{lemma1} and 
\ref{lemma2} to subsection \ref{l12proofs}. 
We now give a proof of Lemma \ref{lemmaa}, assuming
Propositions \ref{lemma1} and \ref{lemma2}.

\begin{proofof}{Proof of Lemma \ref{lemmaa}}
Recall that $\wdist = \sum_{s \in S} f(X_T(s))$. 
Since for any tile $s \in S$ we have
$| X_{\tau_{\nt_T}(s)}(s) - X_T(s)| \leq 1$,
it follows that 
$| f(X_{\tau_{\nt_T}(s)}(s)) - f(X_T(s))| \leq {2 \pi \over n}$,
by (\ref{mvt}).
Thus
\begin{eqnarray}
\left| \wdist - \sum_{s \in S} 
f( X_{\tau_{\nt_T}(s)}(s)) \right| &\leq& \sum_{s \in S}
\left| f( X_{\tau_{\nt_T}(s)}(s)) - f(X_T(s)) \right|   \\
\label{close}
&\leq& 2 \pi n,
\end{eqnarray}
where the last line holds because $|S| \leq n^2$. 
The main remaining step of the proof is to compute
$\e (Z)$.  We claim that
$\e(Z) \geq c n^{15/8}$, for a universal constant $c$. 
Combining this with Propositions \ref{lemma1} and \ref{lemma2} and 
\eqref{close} implies that there exist positive 
constants $b$ and $c$ such that
\[
\P( \wdist \geq c n^{15/8} - 2bn^{7/4} - 2 \pi n) \to 1
\]
as $n \to \infty$. For sufficiently large $n$ the 
quantity 
$c n^{15/8} - 2cn^{7/4} - 2 \pi n$ is larger than 
${c \over 2} n^{15/8}$. 
Incorporating an extra factor of $\half$ into the constant $c$
yields Lemma \ref{lemmaa}.

So it remains only to verify that 
$\e(Z) \geq c n^{15/8}$, for a universal constant $c$. 
Recall that $\tau_k(s)$ 
denotes the $k$th time that the hole is to the right of tile $s$,
and
$(X_{\tau_1(s)}(s), X_{\tau_2(s)}(s), \dots)$ is a simple symmetric random walk on 
$\Z_n$ with a holding probability.
Since the second eigenvalue for this walk $\lambda$ 
satisfies $\lambda > \cos {2 \pi \over n}$, it follows that 
for all $t$ we have
$\e \left( f( X_{\tau_t(s)}(s) \given X_{\tau_1(s)}(s) \right) \geq 
f(X_{\tau_1(s)}(s)) \lambda^{t-1}$, and since 
$f(X_{\tau_1(s)}(s)) \geq f(X_0(s)) - {2 \pi \over n}$ 
it follows that 
\[
\e \left( f( X_{\tau_t(s)}(s)) \right) \geq 
\left( f(X_0(s)) - {2 \pi \over n} \right) \lambda^{t-1}.
\]
Substituting $t = \th$ and summing over $s \in S$ gives
\[
\e Z \geq \left [ 
\sum_{s \in S} 
f( X_0(s)) - {2 \pi |S| \over n}
\right] 
\lambda^{\th-1}.
\]
The expression in square brackets can be bounded below 
by $cn^2$ for a universal constant $c$, since for 
every $s \in S$ we have $f( X_0(s)) \geq \half$. 
Furthermore, since $\th - 1 \leq \epsilon n^2 \log n$ 
by (\ref{epsdef}) and 
$\lambda^{n^2} \geq e^{-\mu}$ by (\ref{mudef}), it follows
that
\begin{eqnarray*}
\e Z &\geq& c n^2 \exp(- \mu \epsilon \log n) \\
&=& c n^{15/8}.
\end{eqnarray*}
(Recall that $\mu \epsilon = 1/8$.)
This verifies the claim
and hence proves the lemma. 
\end{proofof}
\subsection{Proofs of Propositions \ref{lemma1} and \ref{lemma2}}
\label{l12proofs}
It remains to prove propositions
\ref{lemma1} and \ref{lemma2}, which were used in the 
proof of Lemma \ref{lemmaa}.  This is done
is subsections \ref{l1proof} and \ref{l2proof}, respectively.
\subsubsection{
Proof of Proposition \ref{lemma1}}  
\label{l1proof}
Recall that $\nt_t(s)$ denotes the number of times the hole 
has been to the immediate right of tile $s$, up to time $t$.
The main step in the proof of Proposition
\ref{lemma1} is to show that $\nt_t$ 
is well approximated by $t(n^2 - 1)^{-1}$. 
We accomplish this using the second moment method. 

In order to bound the mean and variance of $\e( \nt_t(s))$, 
we use the fact that
the position of the hole relative to tile $s$ 
(that is, the position of the hole minus the position of tile $s$)
behaves   
like a random walk on a certain graph. 
Let $\gt_n$ be the graph obtained from $G_n$ by deleting the 
origin and adding an edge  from $(-1, 0)$ to $(1, 0)$ and 
an edge from $(0, 1)$ to $(0, -1)$. (Figure \ref{graph}
shows $\gt_n$ when $n=5$.) 
Note that 
if $H_t$ denotes the the position of the hole
at time $t$ in the Loyd chain, then 
$H_t - L_t(s)$
is the same random process as
a random walk on $\gt_n$. The times $\taut_k(s)$ 
coincide with the times when the random walk on $\gt_n$ is at the 
vertex $(1,0)$.
   In Lemmas \ref{elemma} and \ref{varlemma} below,
we use the connection to the random walk on $\gt_n$ to bound the 
mean and variance of $\nt_t(s)$. 
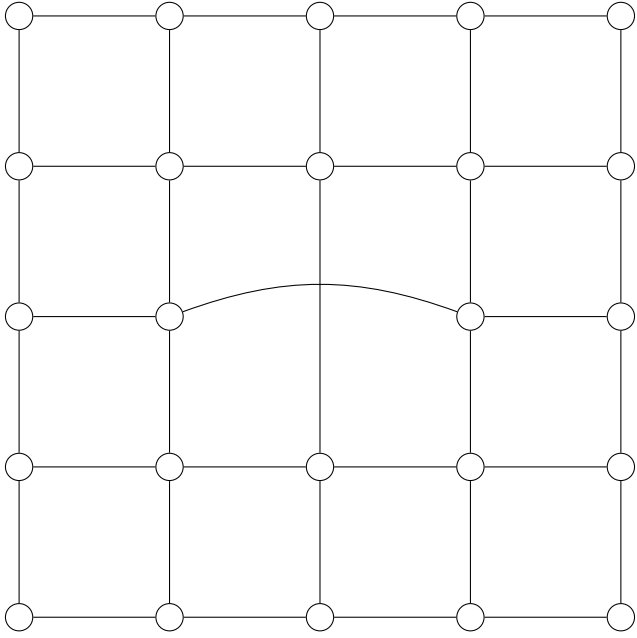
\begin{figure}[t]
\begin{tikzpicture}[xscale = 2, yscale = 2] 
\node at (-2, 2) [circle, draw] (1) {};
\node at (-1, 2) [circle, draw] (2) {};
\node at (0, 2) [circle, draw] (3) {};
\node at (1, 2) [circle, draw] (4) {};
\node at (2, 2) [circle, draw] (5) {};
\draw (1) -- (2) -- (3) -- (4) -- (5);

\node at (-2, 1) [circle, draw] (a1) {};
\node at (-1, 1) [circle, draw] (a2) {};
\node at (0, 1) [circle, draw] (a3) {};
\node at (1, 1) [circle, draw] (a4) {};
\node at (2, 1) [circle, draw] (a5) {};
\draw (a1) -- (a2) -- (a3) -- (a4) -- (a5);

\node at (-2, 0) [circle, draw] (b1) {};
\node at (-1, 0) [circle, draw] (b2) {};
\node at (1, 0) [circle, draw] (b4) {};
\node at (2, 0) [circle, draw] (b5) {};
\draw (b1) -- (b2);
\draw (b2) to [bend left = 20] (b4);
\draw (b4) -- (b5);

\node at (-2, -1) [circle, draw] (c1) {};
\node at (-1, -1) [circle, draw] (c2) {};
\node at (0, -1) [circle, draw] (c3) {};
\node at (1, -1) [circle, draw] (c4) {};
\node at (2, -1) [circle, draw] (c5) {};
\draw (c1) -- (c2) -- (c3) -- (c4) -- (c5);

\node at (-2, -2) [circle, draw] (d1) {};
\node at (-1, -2) [circle, draw] (d2) {};
\node at (0, -2) [circle, draw] (d3) {};
\node at (1, -2) [circle, draw] (d4) {};
\node at (2, -2) [circle, draw] (d5) {};
\draw (d1) -- (d2) -- (d3) -- (d4) -- (d5);

\draw (1)--(a1)--(b1)--(c1)--(d1);
\draw (2)--(a2)--(b2)--(c2)--(d2);
\draw (3)--(a3);
\draw  (a3) to (c3);
\draw (c3)--(d3);
\draw (4)--(a4)--(b4)--(c4)--(d4);
\draw (5)--(a5)--(b5)--(c5)--(d5);
\end{tikzpicture}
\caption{Graph $\gt_n$. (Edges connecting top row to bottom row
and edges connecting leftmost row to rightmost row are not shown.)}\label{graph}
\end{figure}
\begin{lemma}
\label{elemma}
There is a universal constant $A$ such that for 
any tile $s$ and time $t$ we have
\begin{equation}
\label{eclaim}
\l | \e ( \nt_t(s))  - t(n^2-1)^{-1} \r | \leq A \log t.
\end{equation}
\end{lemma}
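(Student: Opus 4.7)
The plan is to exploit the observation, made just above the lemma, that $D_k := H_k - L_k(s)$ evolves as a lazy simple random walk on $\gt_n$, with $\nt_t(s)$ equal to the number of visits of $D$ to the vertex $(1,0)$ during times $0,1,\dots,t-1$. A preliminary step is to check that $\gt_n$ is $4$-regular: each of the four former neighbors of the origin loses one edge to $(0,0)$ but gains one from the newly added pair of edges connecting $(-1,0)$ with $(1,0)$ and $(0,-1)$ with $(0,1)$. Hence the uniform distribution $\pi(v) = 1/(n^2-1)$ is stationary for the lazy walk, and we can rewrite
\[
\e\bigl(\nt_t(s)\bigr) - \frac{t}{n^2-1} \;=\; \sum_{k=0}^{t-1}\Bigl(p^k\bigl(D_0,(1,0)\bigr) - \frac{1}{n^2-1}\Bigr),
\]
reducing the lemma to bounding this sum by $O(\log t)$ uniformly in $n$.

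Next I would split the sum at a threshold $k_0 = C_0 n^2\log n$, with $C_0$ a sufficiently large absolute constant. For $k \geq k_0$, the spectral gap of the lazy walk on $\gt_n$ is of order $1/n^2$, so $|p^k(D_0,(1,0)) - \pi((1,0))| \leq e^{-\alpha k}$ with $\alpha = \Theta(1/n^2)$; choosing $C_0$ large enough makes the tail contribution $O(1)$. For $1 \leq k < k_0$, I would use the on-diagonal heat-kernel bound $p^k(x,y) \leq C/k$, the standard two-dimensional estimate for lazy walks on graphs with two-dimensional polynomial volume growth, which gives
\[
\sum_{k=1}^{k_0-1}\Bigl(\frac{C}{k} + \frac{1}{n^2-1}\Bigr) \;=\; O(\log k_0) + O(\log n) \;=\; O(\log n).
\]
To convert $O(\log n)$ into the $O(\log t)$ bound of the statement, I would handle $t \leq n$ directly via $\sum_{k<t}(C/k + 1/(n^2-1)) = O(\log t)$, and absorb $O(\log n)$ into $O(\log t)$ when $t > n$.

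The main obstacle I expect is transferring the heat-kernel and spectral-gap estimates from $G_n$, where they are classical, to the modified graph $\gt_n$. Because $\gt_n$ is obtained from $G_n$ by only a local modification---deleting one vertex and adding two edges, in a way that preserves $4$-regularity---the Dirichlet forms of the lazy walks on the two graphs are comparable up to a universal multiplicative constant. Consequently both the Nash inequality yielding the $C/k$ on-diagonal bound and the Poincar\'e inequality yielding the spectral-gap lower bound transfer from $G_n$ to $\gt_n$ with at most a constant loss. I would make this precise through a short Dirichlet-form comparison, representing each edge of $\gt_n$ that is not present in $G_n$ by a bounded-length path of torus edges, and conversely, which is in the spirit of the comparison technique used already in Section~\ref{rthc}.
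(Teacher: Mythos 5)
Your proposal is correct and arrives at the same reduction the paper makes---bounding $\bigl|\e(\nt_t(s)) - t(n^2-1)^{-1}\bigr|$ by $\sum_k \bigl|p^k\bigl(D_0,(1,0)\bigr) - \pi\bigr|$ for the lazy walk on $\gt_n$---but you obtain the needed heat-kernel estimate by a different route. The paper proves (Lemma~\ref{hkbound} in Appendix~A, via the evolving-set bound Theorem~\ref{hk2} of \cite{hk} and the isoperimetric profile $\phit(u) \gtrsim 1/(n\sqrt{u})$) a single uniform estimate $\l| p^k(x,y) - \pi(y)\r| \le A/k$ valid for all $k \ge 1$; Lemma~\ref{elemma} is then a one-line consequence by summing $A/k$ over $k \le t$. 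You instead split the sum at $k_0 \asymp n^2\log n$, use a Nash-type on-diagonal bound $p^k(x,y) \le C/k$ for $k < k_0$, and a spectral-gap decay for $k \ge k_0$. Both routes ultimately rest on the same comparability of $\gt_n$ to $G_n$ (the paper phrases this in terms of conductance profiles, you in terms of Dirichlet forms; since the modification is local and preserves $4$-regularity, either works with a universal constant). The evolving-set approach is arguably cleaner here because it yields a $1/k$ bound that already interpolates between short-time polynomial decay and long-time exponential decay, avoiding the split; your version is a bit more elementary but needs the extra step you implicitly rely on---combining the Nash bound at time $k_0$ with the spectral-gap iteration beyond $k_0$ to kill the $n^2$ prefactor that a raw $L^\infty$ spectral estimate produces. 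Since that combination is standard, the proposal is sound, and the final $O(\log n)$ into $O(\log t)$ bookkeeping you describe matches what the paper needs.
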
 
\begin{proof}
Let $\{p(x,y)\}$ be transition probabilities for random walk on $\gt_n$.
Lemma \ref{hkbound} in Appendix A
states that 
there is a universal constant $A > 0$ such that 
\begin{equation}
\label{xybound}
\l| p^t(x,y) - \pi(y) \r| \leq {A \over t}, 
\end{equation}
for all $t \geq 1$, where $\pi(y)$ is the stationary probability 
$(n^2-1)^{-1}$.   
Since the hole is not initially to the right of tile $s$, 
using (\ref{xybound}) with $x = H_t - L_0(s)$
and $y = (1,0)$ gives
\begin{eqnarray}
\l | \e( \nt_{t}(s)) - t \pi(y) \r | &\leq& 
\sum_{k = 1}^t  {A \over k} \\ 
\label{qqq}
        &\leq& A \log t .
\end{eqnarray} 
\end{proof} 
Next we bound the variance of $\nt_t(s)$. 
%
%
\begin{lemma}
\label{varlemma}
There is a universal constant $C$ such that for any tile $s$ we have
\[
\var( \nt_t(s)) \leq C n^{-2} t\log t,
\]
whenever $n^2 \log n \leq t \leq n^5$. 
\end{lemma}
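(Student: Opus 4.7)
\medskip

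\noindent\textbf{Proof proposal for Lemma \ref{varlemma}.}

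The plan is to view $\nt_t(s)$ as an occupation-time statistic for a random walk on $\gt_n$ and estimate its variance via the heat-kernel bound already employed in Lemma \ref{elemma}. As in that lemma, let $Y_k = H_k - L_k(s)$, which is a random walk on $\gt_n$ with uniform stationary distribution $\pi$, and let $y = (1,0)$. Since $Y_0 \neq y$ by assumption, we may write
\[
\nt_t(s) \;=\; \sum_{k=1}^{t} \1_{\{Y_k = y\}},
\qquad
\var(\nt_t(s)) \;=\; \sum_{k=1}^{t} \var\bigl(\1_{\{Y_k=y\}}\bigr) \;+\; 2 \sum_{1 \le k < l \le t} \Cov\bigl(\1_{\{Y_k=y\}},\,\1_{\{Y_l=y\}}\bigr).
\]

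First I would bound each off-diagonal covariance. For $k<l$, the Markov property gives
\[
\Cov\bigl(\1_{\{Y_k=y\}},\,\1_{\{Y_l=y\}}\bigr) \;=\; \P(Y_k=y)\,\bigl[\,p^{l-k}(y,y) - \P(Y_l = y)\,\bigr].
\]
Lemma \ref{hkbound} (invoked in the proof of Lemma \ref{elemma}) yields $|p^j(x,y) - \pi(y)| \le A/j$, so both $|p^{l-k}(y,y) - \pi(y)| \le A/(l-k)$ and $|\P(Y_l=y) - \pi(y)| \le A/l$, and hence $|p^{l-k}(y,y) - \P(Y_l=y)| \le A/(l-k) + A/l$. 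Similarly $\P(Y_k=y) \le \pi(y) + A/k = (n^2-1)^{-1} + A/k$.

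Next I would perform the double sum. Splitting $\P(Y_k = y)$ into its $\pi(y)$ part and its $A/k$ error, the off-diagonal contribution is bounded by
\[
2\pi(y) \sum_{1\le k < l \le t}\!\!\Bigl(\tfrac{A}{l-k} + \tfrac{A}{l}\Bigr) \;+\; 2\sum_{1\le k < l \le t}\!\!\tfrac{A}{k}\Bigl(\tfrac{A}{l-k} + \tfrac{A}{l}\Bigr).
\]
The first sum is $O(\pi(y)\, t \log t) = O(n^{-2} t \log t)$, since each inner sum is $O(\log t)$ and there are $t$ values of $k$. The second sum is $O((\log t)^2)$, since $\sum_{k\le t} A/k = O(\log t)$ and the inner sum in $l$ is again $O(\log t)$. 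The diagonal terms contribute at most $\sum_{k=1}^t \P(Y_k = y) \le t\pi(y) + A \log t = O(n^{-2} t + \log t)$, by Lemma \ref{elemma}.

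Finally I would combine. Under the hypothesis $n^2 \log n \le t \le n^5$, we have $\log t \le 5 \log n$ and $n^2 \log t \le C t$, so $(\log t)^2 \le C n^{-2} t \log t$, which absorbs the error terms into the main term $C n^{-2} t \log t$. The only delicate bookkeeping is ensuring the $(\log t)^2$ remainder is truly subdominant, and this is exactly the reason for the lower hypothesis $t \ge n^2\log n$; the upper hypothesis $t \le n^5$ merely keeps $\log t$ comparable to $\log n$ so that constants absorb cleanly. I don't expect any genuine obstacle beyond tracking these inequalities.
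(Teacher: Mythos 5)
Your proposal is correct and follows essentially the same route as the paper: the same indicator decomposition of $\var(\nt_t(s))$, the same invocation of Lemma \ref{hkbound} to control $\e(I_kI_l)$ and $\e(I_k)\e(I_l)$, the same double-sum estimate yielding $O(n^{-2}t\log t) + O(\log^2 t)$, and the same absorption of the $\log^2 t$ remainder using $n^2\log n \le t$. The only difference is cosmetic: you package the covariance exactly as $\P(Y_k=y)\bigl[p^{l-k}(y,y)-\P(Y_l=y)\bigr]$ via the Markov property before bounding, while the paper bounds $\e(I_iI_j)$ and $\e(I_i)\e(I_j)$ separately and expands the resulting product; the arithmetic and conclusion are identical.
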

\begin{proof}
Fix a tile $s$ and 
for $i$ with $1 \leq i \leq t$, let $I_i$ be the 
indicator of the event that the hole is to the right of tile $s$ at
time $i$. Then $\nt_t(s) = \sum_{i=1}^t I_i(s)$, and hence
\begin{eqnarray}
\label{sst}
\var( \nt_t(s) ) &=& 
 \sum_{i=1}^t \var(I_i) + 2  \sum_{1 \leq i < j \leq t} \cov(I_i I_j).
\end{eqnarray}
The first term is at most $\e(\nt_t(s))$ (since for each $i$ 
we have $\var(I_i) \leq \e(I_i^2) \leq \e(I_i)$) and
recall that Lemma \ref{elemma} implies that $\e( \nt_t(s))$ is
at most $t (n^2 -1)^{-1} + A \log t$. 
To bound the second 
term in (\ref{sst}), note that for each $i$ and $j$ with $i < j$ we have
\begin{eqnarray*}
\cov(I_i, I_j) &=& \e( I_i I_j) - \e(I_i) \e(I_j) \\
&\leq& 
\Bigl( \pi(y) + {A \over i} \Bigr)
\Bigl( \pi(y) + {A \over j-i}  \Bigr) -
\Bigl( \pi(y) - {A \over i} \Bigr)
\Bigl( \pi(y) - {A \over j}  \Bigr),
\end{eqnarray*}
where in the last line we used 
Lemma 
\ref{hkbound} to bound each 
expectation. Expanding each product and then collecting terms gives
\[
\Bigl[ {2A \over i} + {A \over j} + {A \over j-i}
\Bigr] \pi(y) + A^2 \Bigl[ {1 \over i(j-i)} - {1 \over ij} \Bigr].
\]
If we sum this over $j$ with $i < j \leq t$, then the result is at most
\begin{equation}
\nonumber
\Bigl[ {2At \over i} + 2A \log t \Bigr] \pi(y)
+ {A^2 \over i}  \log t.
\end{equation}
If we 
sum this over $i$ with $1 \leq i \leq t$, then the result is at most
\begin{equation}
\label{qq}
\Bigl[ 2A t \log t + A t \log t \Bigr] \pi(y) +
2A^2 \log^2 t, 
\end{equation}
which is of the form $\bigo(n^{-2} t \log t) + \bigo( \log^2 n)$.
(Note that 
since $t \leq n^5$, we have 
$\log^2 t = \bigo( \log^2 n)$.) The result follows if we note that
$\log n = n^{-2} ( n^2 \log n)$, which is at most $n^{-2} t$ 
whenever $n^2 \log n \leq t$. 
\end{proof}

We will need one more lemma before proving
Proposition \ref{lemma1}, but 
first we recall Hoeffding's bounds for sums of independent random 
variables. 
\begin{theorem} (\cite{H})
Let $Y_1, Y_2, \dots$ be i.i.d.~random variables 
and suppose that $\e(Y_1) = 0$ and $|Y_1| \leq 1$. 
Define $S_m = \sum_{i=1}^m Y_i$. 
Then for all positive integers $s$ and $t$ we have
\[
\p( |S_t - S_s| \geq \alpha) \leq 2 e^{- \alpha^2/2 |t-s|}.
\]
\end{theorem}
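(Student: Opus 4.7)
The statement is the standard Hoeffding tail bound for sums of i.i.d.\ bounded random variables, and I would prove it by the classical exponential moment (Chernoff) method. First, by stationarity of the i.i.d.\ sequence, $S_t - S_s$ has the same distribution as $S_n$ where $n = |t-s|$, so it suffices to show $\P(|S_n| \geq \alpha) \leq 2e^{-\alpha^2/2n}$. By a union bound over the events $\{S_n \geq \alpha\}$ and $\{-S_n \geq \alpha\}$ (the latter handled identically by applying the bound to the i.i.d.\ sequence $-Y_i$), it further suffices to prove the one-sided bound $\P(S_n \geq \alpha) \leq e^{-\alpha^2/2n}$.

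Next, for any $\lambda > 0$, Markov's inequality applied to $e^{\lambda S_n}$ yields
\[
\P(S_n \geq \alpha) \;\leq\; e^{-\lambda \alpha}\,\E\bigl(e^{\lambda S_n}\bigr) \;=\; e^{-\lambda \alpha} \,\bigl(\E(e^{\lambda Y_1})\bigr)^n,
\]
where the last equality uses independence of the $Y_i$. The key input is then Hoeffding's lemma: if $Y$ is a mean-zero random variable with $|Y| \leq 1$, then $\E(e^{\lambda Y}) \leq e^{\lambda^2/2}$. I would prove this by writing $Y$ as a convex combination $Y = \tfrac{1+Y}{2}(1) + \tfrac{1-Y}{2}(-1)$, applying convexity of $u \mapsto e^{\lambda u}$, taking expectations, and then bounding $\tfrac{1}{2}(e^\lambda + e^{-\lambda}) = \cosh \lambda \leq e^{\lambda^2/2}$ (a standard power-series comparison).

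Combining these inequalities gives $\P(S_n \geq \alpha) \leq e^{-\lambda \alpha + n\lambda^2/2}$ for every $\lambda > 0$. Optimizing in $\lambda$, the right-hand side is minimized at $\lambda = \alpha/n$, which yields $\P(S_n \geq \alpha) \leq e^{-\alpha^2/(2n)}$. The symmetric one-sided bound and the reduction in the first paragraph then deliver the factor of $2$ and the final inequality. Since this is a textbook argument, the main work is really in verifying Hoeffding's lemma; the rest is a one-line Chernoff optimization, and no obstacle of substance arises.
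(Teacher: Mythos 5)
The paper does not prove this statement; it is quoted directly as a cited result from Hoeffding \cite{H}, so there is no in-paper argument to compare against. Your Chernoff-method derivation (reduce to $S_n$ by stationarity, union-bound the two tails, apply Markov to $e^{\lambda S_n}$, invoke Hoeffding's lemma $\E(e^{\lambda Y}) \le e^{\lambda^2/2}$ via convexity and the bound $\cosh\lambda \le e^{\lambda^2/2}$, then optimize $\lambda = \alpha/n$) is correct and is precisely the standard proof appearing in Hoeffding's original paper and in textbook treatments.
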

\begin{lemma}
\label{devlemma}
Let $Y_1, Y_2, \dots$ be i.i.d.~random variables 
and suppose that $\e(Y_1) = 0$ and $|Y_1| \leq 1$. 
Define $S_m = \sum_{i=1}^m Y_i$. 
Fix constants $C > 0$ and  $\beta$ with $\half < \beta < \sfrac{3}{4}$. 
For positive integers $n$ define 
$M_n = \max { |S_t - S_s| \over |t - s|^\beta}$, where the maximum is 
over $s$ and $t$ such that  
\begin{equation}
\label{conds}
0 \leq s \leq Cn^4 \log n; \;\;\;\;\;\;\;\;\;
0 \leq t \leq Cn^4 \log n; \;\;\;\;\;\;\;\;\;
| s -t | \geq \sqrt{n}.
\end{equation}
Then for every $p > 1$ there is a 
constant $C_p$, which depends only on $p$, such that 
\[
\e(M_n^p) \leq C_p.
\]
\end{lemma}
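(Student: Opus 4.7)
The plan is to combine Hoeffding's inequality (the theorem stated immediately before the lemma) with a union bound over admissible pairs $(s,t)$, and then integrate the resulting tail to control the $p$th moment of $M_n$. Write $N = Cn^4 \log n$ for the range of $s$ and $t$.

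For a fixed admissible pair $(s,t)$ with $k := |t-s| \geq \sqrt{n}$, Hoeffding's bound gives
\[
\P\bigl(|S_t - S_s| > \lambda k^\beta\bigr) \leq 2 \exp\bigl(-\tfrac{1}{2}\lambda^2 k^{2\beta-1}\bigr).
\]
Since $\beta > \half$, the exponent $2\beta-1$ is positive, so $k^{2\beta-1} \geq n^{(2\beta-1)/2}$, and a union bound over the at most $N^2$ admissible pairs yields
\[
\P(M_n > \lambda) \leq 2 N^2 \exp\bigl(-\tfrac{1}{2}\lambda^2 n^{(2\beta-1)/2}\bigr).
\]

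I would then write $\E M_n^p = \int_0^\infty p \lambda^{p-1}\P(M_n > \lambda)\, d\lambda$ and split the integral at the threshold $\lambda_0 = O\bigl(\sqrt{\log n} \cdot n^{-(2\beta-1)/4}\bigr)$, chosen so that the right-hand side above equals $1$ at $\lambda = \lambda_0$. On $[0,\lambda_0]$, use the trivial bound $\P \leq 1$, contributing $\lambda_0^p$, which tends to $0$ as $n \to \infty$ because $\beta > \half$. On $[\lambda_0,\infty)$, the substitution $\mu = \lambda \sqrt{a}$ with $a := \tfrac{1}{2}n^{(2\beta-1)/2}$, together with the standard tail estimate $\int_r^\infty \mu^{p-1} e^{-\mu^2}\,d\mu = O_p\bigl(r^{p-2}e^{-r^2}\bigr)$, causes the $2N^2$ prefactor to be absorbed by $e^{-r^2}$ at $r = \lambda_0\sqrt{a}$, leaving a remainder that is polynomially small in $n$. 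Hence $\E M_n^p \to 0$ as $n \to \infty$. For each fixed $n$, the deterministic bound $|S_t - S_s| \leq |t-s|$ gives $M_n \leq N^{1-\beta}$, so $\E M_n^p$ is finite; taking $C_p$ to be the supremum over $n$ of $\E M_n^p$ yields the stated uniform constant.

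The main point to check, rather than an obstacle, is that the exponential gain from Hoeffding dominates the $O(n^8 (\log n)^2)$ cost of the union bound. This works precisely because the constraint $|t-s| \geq \sqrt n$ forces each increment $S_t - S_s$ to be sub-Gaussian with variance parameter at most $k$ where $k \geq \sqrt n$, so the normalization by $k^\beta$ with $\beta > \half$ provides a genuine polynomial shrinkage in $n$ at the scale $\lambda_0$. Without the constraint $|t-s| \geq \sqrt n$, small increments would give ratios of order $|t-s|^{1-\beta}$ that need not vanish, and the naive union bound would fail. The upper restriction $\beta < \sfrac{3}{4}$ is not used in this proof; it presumably matters only in the application of the lemma elsewhere in Section \ref{lb}.
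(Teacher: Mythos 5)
Your proof is correct and uses essentially the same strategy as the paper: Hoeffding's inequality with deviation $c|t-s|^\beta$, the constraint $|t-s|\ge\sqrt n$ together with $\beta>\tfrac12$ to get a tail bound $2\exp\bigl(-\tfrac{c^2}{2}n^{\beta-1/2}\bigr)$ that dominates the polynomial-in-$n$ union bound, and integration of the tail to control $\e(M_n^p)$. The paper's bookkeeping is slightly coarser---it shows $\P(M_n>c)\le e^{-c^2}$ for $c\ge1$ once $n$ is large, then bounds $\int_0^\infty\P(M_n^p>t)\,dt$ by a constant---and you are also correct that $\beta<\tfrac34$ plays no role in this lemma.
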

\begin{proof}
Since each $M_n$ is bounded it is enough to show that 
$\limsup_{n \to \infty} \e(M_n^p) < \infty$. 
If $|s - t| > \sqrt{n}$ then 
applying Heoffding's bounds with $\alpha = c |t-s|^\beta$
gives
\begin{equation}
\label{probbound}
\p \Bigl( { |S_t - S_s | \over |t - s|^\beta } > c \Bigr)
\leq 2 \exp\left(- {c^2 \over 2} n^{\beta - \half} \right).
\end{equation}
Define $p_n(c) := \p( M_n > c)$.
There are at most $C^2 n^{10}$ pairs $(s,t)$ that 
satisfy the conditions in (\ref{conds}). Thus if $n$ 
is large enough so that for all $c \geq 1$ we have
\[
2 C^2 n^{10}  \exp\left( - {c^2 \over 2}  n^{\beta - \half} \right)
\leq e^{-c^2},
\]
a union bound implies that for all $c \geq 1$ 
we have $p_n(c) \leq e^{-c^2}$ and hence
\begin{eqnarray*}
\e(M_n^p) &=& \int_0^\infty \P(M_n^p > t) \; dt \\
&\leq& \int_0^\infty p_n( t^{1/p}) \; dt \\
&<& \infty.
\end{eqnarray*}
\end{proof}
Now that we have Lemmas \ref{elemma}, \ref{varlemma}
and \ref{devlemma},
we are ready to prove Proposition \ref{lemma1}
\begin{proofof}{Proof of  Proposition \ref{lemma1}}
Since $T$ is $\bigo(n^4 \log n)$, 
applying Lemma \ref{varlemma} with 
$t = T$
implies that when $n$ is sufficiently large, we have
$\var( \nt_T(s)) \leq C n^2 \log^2 n$. 
It follows that 
\begin{eqnarray*}
\e \l | \nt_T(s) - \th \r|  
&\leq& 
\e \Bigl( \l | \nt_T(s) - \e \nt_T(s) \r | \Bigr) 
+ 
\l | \e \nt_T(s) - \th \r|  \\
&\leq& \sqrt{C} n \log n + A \log n,
\end{eqnarray*}
where in the second line we have used 
the inequality $\e | X - \e X  | \leq \sd(X)$, 
valid for all random variables $X$, to bound the first term
and Lemma \ref{elemma} to bound the second term.  
It follows that
\begin{equation}
\label{important}
\e \l | \nt_T(s) - \th \r|   \leq B n \log n,
\end{equation}
for a universal constant $B$.  

Let $S_k = X_{\tau_k(s)}(s) - 
X_{\tau_1(s)}(s)$, that is, the change of the $s$ random walk 
after $k-1$ steps. Note that 
we can write $S_k$ as $Y_1 + Y_2 + \cdots Y_k$, where
the $Y_i$ are i.i.d.~$\pm 1$ random variables. 
Fix $\beta \in (\half, \sfrac{3}{4})$.
Since $| S_{\nt_T(s)} - S_{\th} | \leq | \nt_T(s) - \th|$,
and since $\nt_T(s)$ and $\th$ can both be bounded 
above by $C n^4 \log n$ for a universal constant $C$, it
follows that 
if $M_n$ is defined as in the statement of Lemma \ref{devlemma},
then
\begin{equation}
\label{trunc}
| S_{\nt_T(s)} - S_{\th} | \leq M_n |\nt_T - \th|^\beta + \sqrt{n} 
\one\Bigl(| \nt_T(s) - \th| \leq \sqrt{n}\Bigr).
\end{equation}
Let $C_p$ be the constant from Lemma \ref{devlemma}.
Applying H\"older's inequality with $p = {1 \over 1-\beta}$ 
and $q = {1 \over \beta}$ gives
\begin{eqnarray*}
\e\left( M_n | \nt_T(s) - \th|^\beta \right) &\leq& 
\e( M_n^p)^{1/p} \left( \e|  \nt_T(s) - \th| \right)^\beta  \\
&\leq& C_p^{1/p} \left[ B n \log n \right]^\beta, 
\end{eqnarray*}
where
in the last line we have used Lemma \ref{devlemma}
to bound 
$\e( M_n^p)^{1/p}$ 
and \eqref{important} to bound
$\e|  \nt_T(s) - \th|$. 

Taking expectations in (\ref{trunc}) shows that 
there is a constant $B>0$ 
such that
\begin{eqnarray*}
\e| S_{\nt_T(s)} - S_{\th} | &\leq& 
C_p^{1/p} \left[ B n \log n \right]^\beta + \sqrt{n}. 
\end{eqnarray*}
Hence there is a 
$\gamma \in (\beta, \sfrac{3}{4})$ 
such that 
\begin{eqnarray}
\label{thirtyprimeprime}
\e| S_{\nt_T(s)} - S_{\th} | 
&\leq& B n^\gamma.
\end{eqnarray}
Since 
$S_{\nt_T(s)}(s) - S_\th(s) = 
X_{\tau_{\nt_T}(s)}(s) -  X_{\tau_{\th}(s)}(s)$
from the definition of $S_k$,  
combining (\ref{thirtyprimeprime})
with  (\ref{close}) gives
\begin{eqnarray}
\e \Bigl( 
\left|
f( X_{\tau_{\nt_T}(s)}(s) ) - f(X_{\tau_{\th}(s)}(s))
\right|
\Bigr) &\leq& {2 \pi B \over n} n^\gamma \\
\label{fred}
&=& B' n^{\gamma - 1}
\end{eqnarray}
for a constant $B'$. 
Summing
(\ref{fred}) over 
$s \in S$ gives
\[
\e \Bigl( 
\left|
\sum_{s \in S} 
f( X_{\tau_{\nt_T}(s)}(s)) - \sum_{s \in S} f(X_\fin(s)) \right|
\Bigr)
\leq B' n^{\gamma + 1}.
\]
Combining this with Markov's inequality yields the proposition,
since $\gamma < \sfrac{3}{4}$.
\end{proofof}

\subsubsection{Proof of Proposition \ref{lemma2}}
\label{l2proof}
We  prove Proposition \ref{lemma2} using 
the method of bounded differences.
The main step is to show that each step of the Loyd process
has a small effect on the conditional expectation of $Z$,
which we prove via Lemma \ref{bd} below.

 Define $X_\fin(s) = X_{\tau_\th(s)}(s)$
and define $\ff(s) := f( X_\fin(s))$, so that
we can write $Z$ as
\[
Z = \sum_{s \in S} \ff(s).
\]
Let $\h_t = (L_0, L_1, \dots, L_t)$ be the history of the Loyd
process 
up to time $t$. We call the Markov chain $(\h_t: t \geq 0)$ the 
{\it history process.} If $H = (L_0, \dots, L_k)$ 
is a state of the history process, we write 
$L(H)$ for the Loyd configuration $L_k$. 

Let $\h \to \hh$ be a possible transition of the history 
process. We aim to compare the distribution of 
$Z$ when the history process starts 
at $\h$ versus when it starts from $\hh$. 
We shall refer to the history process started from 
$\h$ (respectively, $\hh$) as the 
{\it primary} (respectively, {\it secondary}) 
history process. \\
\\    
{\bf Convention.} 
If a random variable $W$ is defined in terms of the \aux process,
we write ${\widehat W}$ for the corresponding random variable 
defined in terms of the secondary process, and similarly for events. 

\begin{lemma}
\label{bd}
We have
\[
| \e ( Z ) - \e(\zhat) | \leq {D \log n \over n},
\]
for a universal constant $D$.
\end{lemma}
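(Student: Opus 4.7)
The plan is to combine the eigenfunction identity for the $s$ random walk with a coupling argument for the hole's walk on $\gt_n$.  Let $y \in V_n$ denote the move with $L(\hh) = L(\h) \cdot y$, and let $T^*$ denote the unique tile whose position differs between $L(\h)$ and $L(\hh)$; every other tile occupies the same position in the two configurations, while the hole position shifts by $y$.

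Since the process $(X_{\tau_k(s)}(s))_{k \geq 1}$ is a symmetric random walk on $\z_n$ with holding, and $f(x) = \cos(2\pi x/n)$ is an eigenfunction of its transition kernel with eigenvalue $\lambda$, conditioning on $X_{\tau_1(s)}(s)$ and iterating gives
\[
\e[\ff(s)] \;=\; \lambda^{\th - 1}\, \e\bigl[f(X_{\tau_1(s)}(s))\bigr],
\]
and the analogous identity holds for the secondary process.  Since $\lambda^{\th - 1} \leq 1$, the lemma reduces to proving
\[
\sum_{s \in S} \Bigl|\,\e_\h f(X_{\tau_1(s)}(s)) \;-\; \e_\hh f(X_{\tau_1(s)}(s))\,\Bigr| \;=\; O\!\Bigl(\frac{\log n}{n}\Bigr).
\]

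For each $s \neq T^*$, write $X_{\tau_1(s)}(s) = p_{s,1} + \Delta_s$, where $p_{s,1}$ is the common $x$-coordinate of tile $s$ in the two configurations and $\Delta_s$ is the accumulated $x$-shift of tile $s$ up to time $\tau_1(s)$.  The joint process consisting of the running shift together with the hole's position relative to $s$ is a Markov chain driven by the hole's random walk on $\gt_n$, whose starting vertex differs by exactly $y$ between primary and secondary.  I would couple two copies of this walk from the neighbouring starting vertices and bound the per-tile contribution using (i) the Lipschitz-type bound $|f'| \leq 2\pi/n$; (ii) the $O(\log n)$ expected number of swaps of tile $s$ before $\tau_1(s)$, which follows from the $\Theta(n^2 \log n)$ expected hitting time of $(1,0)$ in $\gt_n$ (Appendix~A) together with the $\Theta(1/n^2)$ per-step chance the walk is at a neighbour of the origin; and (iii) Fourier-orthogonality cancellations across the positions $p_s$ obtained from the expansion
\[
f(p_{s,1} + \Delta_s) \;=\; \cos\bigl(\tfrac{2\pi p_{s,1}}{n}\bigr)\cos\bigl(\tfrac{2\pi\Delta_s}{n}\bigr) \;-\; \sin\bigl(\tfrac{2\pi p_{s,1}}{n}\bigr)\sin\bigl(\tfrac{2\pi\Delta_s}{n}\bigr).
\]
The exceptional tile $T^*$ contributes at most $O(1)$, which is absorbed into $D$.

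The main obstacle is executing the coupling and hitting-time analysis on $\gt_n$ sharply enough, and then identifying the cancellations that compress what would otherwise be a sum of $|S| \leq n^2$ per-tile terms of size $O(\log n / n)$ down to the claimed $O(\log n / n)$ total.  Without these cancellations a crude term-by-term bound loses a factor of $n$, so the Fourier step over the spatial variable $p_s$ is essential.
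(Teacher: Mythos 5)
Your reduction via the eigenfunction identity is a genuine and correct observation: since $f$ is an exact eigenfunction of the $s$-random walk with eigenvalue $\lambda$, and since the conditional law of $(X_{\tau_k(s)}(s))_{k\geq 1}$ given $X_{\tau_1(s)}(s)$ is the same under $\h$ and $\hh$, one does obtain $\e(Z)-\e(\zhat)=\lambda^{\th-1}\sum_{s\in S}\bigl(\e_\h f(X_{\tau_1(s)}(s))-\e_\hh f(X_{\tau_1(s)}(s))\bigr)$, which reduces the problem to the first approach time $\tau_1(s)$. The paper does not make this reduction explicitly, so this is a legitimate simplification you found independently.

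However, what remains is precisely the hard part, and your plan for it does not work. You propose to obtain the crucial extra factor of $1/n$ from ``Fourier-orthogonality cancellations across the positions $p_s$.'' But by construction $S=\{s:f(X_0(s))>1/2\}$ consists only of tiles whose $x$-coordinates lie in a small arc near $0$, so the numbers $\cos(2\pi p_{s,1}/n)$ over $s\in S$ are all close to $1$ and exhibit no oscillation; there is no orthogonality to exploit in the spatial variable. Likewise your item (ii), that tile $s$ is swapped $O(\log n)$ times before $\tau_1(s)$, is not the right quantity: what matters for the difference $\e_\h-\e_\hh$ is not how often $s$ moves before $\tau_1(s)$ but how often it moves \emph{differently} under the two couplings, and you have not isolated that. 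Your sketch would, at best, give a per-tile bound of order $\log n/n$ and then a triangle-inequality total of order $n\log n$, which you yourself note is too large; the mechanism by which the sum collapses is missing.

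The paper's actual cancellation mechanism is probabilistic, not Fourier-analytic, and it is worth contrasting. The tiles are grouped by column, and for a column $C$ at initial distance $d$ from the hole, one defines $\zplus,\zminus$, the conditional expectations of $\ff(s)$ given the hole first arrives beside $s$ from the right or the left; symmetry gives $|\zplus-\zminus|\leq 2\pi/n$. The key identity \eqref{diff} shows that $\e(\ff(s))-\e(\fhf(s))$ is controlled entirely by the difference in the probabilities of which side the hole first approaches from, and a reflection coupling of the two hole walks makes those probabilities agree unless the holes fail to merge before reaching columns $C_L,C_R$ — an event of probability $O(1/(d+1))$. Summing a geometric number of near-misses over the column and then summing $1/(d+1)$ over $d$ produces the $\log n/n$. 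Your eigenfunction reduction could in principle be combined with this column-coupling argument in place of the Fourier step, but as written your proof has a genuine gap at the point where the $n^2$ per-tile contributions must be shown to collapse to $O(\log n/n)$.
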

\begin{proof}
Our main tool is coupling. 
Note that to 
demonstrate a coupling 
of the primary and secondary history processes, it is 
sufficient to demonstrate a coupling of the Loyd process 
started from $L := L(\h)$ and the Loyd process started from 
$\lhat := L(\hh)$. We call these processes the primary and secondary 
Loyd processes, respectively.

We start by bounding $|\e( \ff(s)) - \e( \fhf(s))|$ 
for the case
when $s$ is the tile swapped with the hole in the transition from 
$L$ to $\lhat$. 
We can couple the secondary Loyd process with the \aux Loyd process
so that the way that the hole moves after the first time 
it is to the right of tile $s$ is the same in both processes. Since with this 
coupling we have $| X_\fin(s) - \xhat_\fin (s) | \leq 1$, equation (\ref{mvt})
implies that
\begin{equation}
\label{swapped}
| \e( \ff(s)) - \e( \fhf(s)) | \leq {2 \pi \over n}.
\end{equation}

Let $S'$ be the set of tiles in $S$
that are not
swapped with the hole in the transition from 
$L$ to $\lhat$. We now consider the tiles in $S'$. It will be convenient to 
group the tiles in columns (i.e., group them according to their $x$-coordinates)
and then consider the columns one at a time.

Let $H_t$ be the location of the hole at time $t$
in the primary Loyd process, 
and suppose that $H_0 = (h_x, h_y)$. Let $C$ be a {column} 
in $V_n$, that is, a set of the form
$\{(j, k): k \in \Z_n \}$ for some $j \in \Z_n$, and suppose that 
$|h_x - j| = d$ (that is, 
the hole is initially a distance $d$ from $C$),
where $d \in \{0,1,2, \dots\}$. 
We claim that there is a universal constant $D$ such that 
\begin{equation}
\label{claimbd}
 \l| \sum_{s \in S' \cap C}
\e( \ff(s) - \fhf(s)) \r| 
\leq {D \over n(d+1)}.
\end{equation} 
Summing this over columns $C$ and combining this 
with (\ref{swapped}) proves the Lemma.

We now prove the claim. 
We verify (\ref{claimbd}) by constructing a coupling 
of the \aux Loyd process and the secondary  Loyd process. 
The coupling is designed so that if the hole is initially far 
away from column $C$, then $H_t$ is likely to couple with 
$\hhat_t$ before it gets close to column $C$. 

Let $C_L$ and $C_R$  be the columns to the immediate left and 
right, respectively, of $C$.
We now give a rough description of the coupling.
The nature of the coupling will depend on whether the hole moves
horizontally or vertically in the transition from $L$ to $\lhat$. 
If the hole moves horizontally (respectively, vertically),
then the trajectory of $H_t$ 
is the reflection of the trajectory of $\hhat_t$ 
about a vertical (respectively, horizontal) axis,
up until the time when either the holes have coupled or 
one of them has reached column $C, C_R$ or $C_L$. 
We now give a more formal description in the case 
where $\hhat_0 = (h_x + 1, h_y)$. (The other cases are similar.
In the case where the hole moves vertically in the transition 
from $L$ to  $\lhat$, the coupling is the same, except that
the roles of vertical and horizontal moves are reversed.)
\\
\\
{\bf The coupling in the case where $\hhat_0 = (h_x + 1, h_y)$} 
\begin{enumerate}
\item If $H_t = \hhat_t$, then we couple so that
$H_{t+1} = \hhat_{t+1}$;

\item else, if either $H_t$ or $\hhat_t$ is in column $C$, $C_R$ or 
$C_L$, then the holes move independently;

\item 
else, if $H_t$ is to the immediate left of $\hhat_t$,
we use the following rule. \\
~\\

\begin{tabular}{lll}
\aux & secondary & probability \\
\hline
$\leftarrow$   &   $\rightarrow$   & $1/8$ \\
$\rightarrow$   &   do nothing  & $1/8$ \\
$\uparrow$   &   $\uparrow$   & $1/8$ \\
$\downarrow$   &   $\downarrow$   & $1/8$ \\
do nothing  &   $\leftarrow$   & $1/8$ \\
do nothing   &  do nothing  & $3/8$ \\
\end{tabular}

\item 
else, 
we use the following rule. \\
~\\
\begin{tabular}{lll}
\aux & secondary & probability \\
\hline
$\leftarrow$   &   $\rightarrow$   & $1/8$ \\
$\rightarrow$   &  $\leftarrow$   & $1/8$ \\ 
$\uparrow$   &   $\uparrow$   & $1/8$ \\
$\downarrow$   &   $\downarrow$   & $1/8$ \\
do nothing   &  do nothing  & $1/2$ \\
\end{tabular}

\end{enumerate}
Note that 
if the $x$-coordinate of $H_t$ takes the value $h_x +1$ before
either $H_t$ or $\hhat_t$ hits $C, C_R$ or $C_L$  then the holes couple
before either of them affects tile $s$. 

Let $\ttil$ be the first time either $H_t$ or $\hhat_t$ hits 
columns $C, C_L$ or $C_R$. Let $E$ be the event that the 
holes have not coupled before time $\ttil$. 
We claim that 
\begin{equation}
\label{coupleprob}
\P(E) \leq {C \over d + 1},
\end{equation} 
for a universal constant $C$. 
(Recall that $d$ is the initial distance between the hole and
column $C$.) It is enough to verify (\ref{coupleprob}) in the 
folowing two cases, since we can always reduce to one
of these cases by interchanging the roles of $H_t$ and $\hhat_t$
if necessary:
\begin{enumerate}
\item $\hhat_0$ is to the immediate right of $H_0$.

\item $\hhat_0$ is immediately below $H_0$.

\end{enumerate}
In the first case, (\ref{coupleprob})
follows from part (i) of Lemma \ref{randomwalk}
in Appendix B,
since the event $E$ occurs only 
if time $\ttil$ occurs before the $x$-coordinate of
$H_t$ takes the value $h_x + 1$. 
In the second case, (\ref{coupleprob}) 
follows from part (ii) of Lemma \ref{randomwalk},
since in this case the event $E$ occurs only 
if time $\ttil$ occurs before the $y$-coordinate of
$H_t$ takes the value $h_y - 1$. 

 Let $T_C$ be the first time that the hole is in column $C$. 
For tiles $s \in S$ that are initially in column $C$,   
let $T_R(s)$ (respectively, 
$T_L(s)$) 
be the first time that the hole is to the immediate 
right (respectively, left) of tile  
$s$.
Let $R_s$ be the event that 
$T_R(s) = \min( T_R(s), T_L(s), T_C)$ and
let
$L_s$ be the event that 
$T_L(s) = \min( T_R(s), T_L(s), T_C)$.
Define
\[
\zplus = \e \left( \ff(s) \given T_R < T_L \right),
\;\;\;\;\;\;\;\;\;
\zminus = \e \left( \ff(s) \given T_L < T_R \right).
\]
Note that (\ref{mvt}) implies that
\begin{equation}
\label{close2} 
|\zplus - \zminus | \leq {2 \pi \over n}.
\end{equation} 
  
We say that the hole is {\it beside} a tile if it is to its 
immediate right
or immediate left. 
Note that 
if the hole starts in the same column as tile $s$, then the next time the
hole is beside tile $s$ 
it is equally likely to be to its right as 
to its left. It follows that
\[
\e(\ff(s)) = \P(R_s)\zplus + \P(L_s) \zminus + 
[1 - \P(R_s) - \P(L_s)] (\half \zplus + \half \zminus).
\]
Rearranging terms gives
\begin{equation}
\label{rear1}
\e(\ff(s)) = 
\half \Bigl[ (\zplus + \zminus)
+ \P(R_s) (\zplus - \zminus) + \P(L_s) (\zminus - \zplus) \Bigr].
\end{equation}
Similarly, we also have
\begin{equation}
\label{rear2}
\e(\fhf(s)) =    
\half \Bigl[ (\zplus + \zminus)
+ \P(\ahat) (\zplus - \zminus) + \P(\bhat) (\zminus - \zplus) \Bigr].
\end{equation}
Replacing each probability in 
(\ref{rear1})
and (\ref{rear2})
with the expectation of an appropriate indicator 
random variable, and then subtracting 
(\ref{rear2})
from (\ref{rear1}), gives
\begin{equation}
\label{diff}
\e(\ff(s)) - \e(\fhf(s)) = 
\half \e( \one_{R_s} - \one_{\ahat} ) \Delta -
\half \e( \one_{L_s} - \one_{\bhat} ) \Delta, 
\end{equation}
where $\Delta := \zplus - \zminus$. 
Hence 
\[
\l | \e(\ff(s)) - \e(\fhf(s)) \r |  \leq 
|\Delta| \max( 
\e( \one_{R_s} - \one_{\ahat}), \e( \one_{L_s} - \one_{\bhat})). 
\] 
Note that 
$\one_{R_s} - \one_{\ahat}$ and 
$\one_{L_s} - \one_{\bhat}$ are both $0$
on the event that the holes couple before either one hits $C_R$ or $C_L$. 
It follows that 
\begin{equation}
\label{diff2}
\l | \e(\ff(s)) - \e(\fhf(s)) \r |  \leq
|\Delta| \cdot  \e ( Y(s) +  \yhat(s) ),
\end{equation}
where $Y(s)$ is the indicator of the event that the hole is beside 
tile $s$ before time $T_C$.
Let $Y = \sum_{s \in C} Y(s)$ be the total number of positions
in column $C_L$ and $C_R$ visited before time $T_C$. 
Summing over $s \in C$ gives
\begin{equation}
\label{sumbound}
\sum_{s \in C} 
\l | \e(\ff(s)) - \e(\fhf(s)) \r |  \leq
|\Delta| \cdot \e (Y + \yhat)
\end{equation}
Note that  $Y$ and $\yhat$ are both $0$ 
unless the event $E$ occurs and recall that  (\ref{coupleprob}) gives 
$\P(E) \leq {C \over d+1}$. Furthermore, 
the condional distribution 
of both $Y$ and $\yhat$ given $E$ is geometric($\quarter$), 
since each time the hole is in column $C_R$ or 
$C_L$, it moves to column $C$ in the next step with probability $\quarter$. 
It follows that 
\begin{eqnarray}
\e( Y + \yhat) &\leq& {C \over d + 1} \e( Y + \yhat \given E) \\
\label{esmall}
&=& {8C \over d + 1}.
\end{eqnarray} 
Finally, recall that $\Delta = \zplus - \zminus$
and hence $|\Delta| \leq
{2 \pi \over n}$ 
by (\ref{close2}). Combining this with 
(\ref{sumbound}) 
and (\ref{esmall}) vertifies 
(\ref{claimbd}), which proves the lemma.
\end{proof}

Now that we know there are bounded
differences, we are ready to prove Proposition \ref{lemma2}:

\begin{proofof}{Proof of Proposition \ref{lemma2}}
We need to show that for any $b > 0$ we have
\[
\P( |Z - \e(Z)| > b n^{7/4}) \to 0
\]
as $n \to \infty$, where $Z = \sum_{s \in S} f( X_\fin(s))$. 

Recall that $\taut_k(s)$ is the $k$th time at which the 
hole is to the immediate right of tile $s$. 
Define $\tau = \max_{s \in S} \taut_\th(s)$.
Let $\f_t = \sigma(L_1, \dots, L_t)$
and consider the
Doob martingale
\[
M_t := \e ( Z \given \f_t).
\]
The idea of the proof will be to evaluate the 
martingale at a suitably chosen time $K$. The value of $K$ will be 
chosen  
to be large enough so that $\tau \leq K$ with high probability,
but small enough so that the Azuma-Hoeffding 
inequality will give a good 
large deviation bound for $M_K$. To these ends, we choose $K = n^5$. 
Note that $Z$ is determined by time $\tau$. 
Hence
$M_K = Z$ unless $\tau > K$.
Furthermore, we have $\e(M_{K}) = \e(Z)$. It  
follows that
\begin{equation}
\label{twothings}
\P( |Z - \e(Z)| > b n^{7/4}) \leq \P( |M_{K} - \e(M_{K})|
> b n^{7/4}) + \P( \tau > K).
\end{equation}
We now bound each term on the righthand side of (\ref{twothings}).
We start with the first term. Lemma \ref{bd} 
implies that
\[
| M_t - M_{t-1} | \leq {D \log n \over n},
\]
for $t$ with $1 \leq t \leq K$. Thus the Azuma-Hoeffding bound gives
\begin{eqnarray}
\label{sum}
\P( | M_K - \e(M_K) | \geq x ) 
&\leq& 2 \exp \left( {-x^2 \over 2\sum_{i=1}^K C^2 } \right),
\end{eqnarray}
where $C = {D \log n \over n}$. Substituting $x = b n^{7/4}$
and $K = n^5$ into (\ref{sum}) gives
\begin{eqnarray}
\P( | M_{K} - \e(M_{K}) | \geq b n^{7/4}  ) 
&\leq& 2 \exp \left( {-b^2 n^{7/2} \over 2 n^3 B^2 \log^2 n} \right) \\
&=& 2 \exp \left( {-b^2 n^{1/2} \over 2  B^2 \log^2 n} \right),
\end{eqnarray}
which converges to $0$ as $n \to \infty$.

Next, we bound $\P( \tau > K)$. 
Note that $\taut_\th(s) \leq K$ whenever $\nt_{K}(s) \geq \th$.
Furthermore, since $K = n^5$, 
Lemmas \ref{elemma} and \ref{varlemma} imply 
that for sufficiently large $n$ 
we have
\[
\e( \nt_{K}(s)) \geq n^3 - \bigo(\log n); \;\;\;\;\;\;\;\;\;
\var (\nt_{K}(s)) = \bigo(n^3 \log n).
\] 
Note also that $\th$ is $\littleo(n^3)$. 
Thus Chebyshev's inequality implies that
$\P( \nt_K(s) < \th)$ is $\bigo \left( {\log n \over n^3} \right)$,
and hence 
$\P( \tau_\th > K)$ is $\bigo \left( {\log n \over n^3} \right)$.
Thus 
a union bound implies that
$\P( \tau > K)$ is $\bigo \left( {\log n \over n} \right)$,
and hence converges to $0$ as $n \to \infty$. This completes the 
proof. 
\end{proofof}

\section{Appendix A: Probability bounds for random 
walk on $\gt_n$}
In this section we 
derive bounds on transition probabilities for random 
walk on $\gt_n$. 
First, we give some definitions and 
extract some necessary results from \cite{hk}.

Let $\{q(x,y)\}$ be transition probabilities for a Markov chain
on a finite state space $V$ with stationary distribution $\pi$.
For $S \subset V$, define the ``boundary size'' 
$|dS| = \sum_{x \in S, y \in S^c} \pi(x) q(x, y)$. 
Following \cite{JS}, we call 
$\phi_S:=\frac{|\d S|}{\pi(S)}$ the {\em conductance\/} of $S$.
Write $\pi_*:= \min_{x \in V} \pi(x)$ and
define $\phi(r)$ for $r \in [\pim,1/2]$ by
\be \lab{defphi}
\phi(r) = \inf 
\left\{  \phi_S : \pi(S) \leq r \right\}
\, .
\ee
For $r>1/2$, let $\phi(r)=\phi(1/2)$.
We call $\phi$ the {\it isoperimetric profile}.  
We recall the following theorem from \cite{hk}.

\begin{theorem} 
\label{hk2}
Suppose that $q(x,x) \ge \half$ for all $x \in V$.
If
\be \lab{inteq2}
t \ge 1 +
 \int_{ \pi_* }^{4/\epsilon}
\frac{4 du}{u \phi^2(u)} \,,
\ee
then
\be \lab{unif}
\l| \frac{q^t(x,y) - \pi(y)}{\pi(y)} \r| \leq \epsilon.
\ee
\end{theorem}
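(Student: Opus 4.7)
The plan is to invoke the evolving set construction on which \cite{hk} is based. Given the lazy Markov chain $q$ with stationary distribution $\pi$, define a dual process $(S_t)_{t \geq 0}$ on subsets of $V$ as follows: starting from $S_0 = \{x\}$, and given $S_t$, draw $U_{t+1}$ uniformly on $[0,1]$ and set
\[
S_{t+1} \;=\; \{y \in V : Q(S_t, y) \geq U_{t+1}\, \pi(y)\},
\]
where $Q(S, y) := \pi(y)^{-1} \sum_{z \in S} \pi(z) q(z, y)$. A direct calculation shows that $(\pi(S_t))_{t \geq 0}$ is a nonnegative martingale, and one establishes the duality
\[
\frac{q^t(x, y)}{\pi(y)} \;=\; \E^{\{x\}}\!\left[\frac{\1_{\{y \in S_t\}}}{\pi(S_t)}\right],
\]
from which Cauchy--Schwarz gives $\l|q^t(x,y)/\pi(y) - 1\r| \leq \E\!\left[\sqrt{(1 - \pi(S_t))/\pi(S_t)}\,\right]$. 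This reduces the theorem to showing that for $t$ exceeding the integral in \eqref{inteq2}, the right-hand side is at most $\epsilon$.

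The heart of the argument is a one-step multiplicative contraction of $Z_t := \sqrt{\pi(S_t)(1 - \pi(S_t))}$. Using the laziness hypothesis $q(x, x) \geq \tfrac{1}{2}$, one verifies that outside $S_t$ the function $Q(S_t, \cdot)$ takes values only in $\{0\} \cup [\tfrac{1}{2}, 1]$; working through the uniform-threshold definition of $S_{t+1}$ then yields
\[
\E[Z_{t+1} \mid S_t] \;\leq\; Z_t\left(1 - \tfrac{1}{8}\,\phi\bigl(\pi(S_t) \wedge \pi(S_t^c)\bigr)^2\right),
\]
where the isoperimetric quantity enters through the edge boundary flux $|\partial S_t|$ appearing in the expansion of the square root. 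One uses $\phi_{S_t} \geq \phi(\pi(S_t) \wedge \pi(S_t^c))$ (which holds because the infimum in \eqref{defphi} is taken over a smaller family for larger $r$) to replace the raw conductance of $S_t$ with the profile $\phi$ evaluated at the current scale.

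Iterating this contraction and taking logs gives
\[
\E\, \log h_t \;\leq\; -\tfrac{1}{8}\sum_{s < t} \E\!\left[\phi(\pi(S_s))^2\right], \qquad h_t := \sqrt{(1-\pi(S_t))/\pi(S_t)}.
\]
A time-change argument then replaces the sum over steps by an integral over scales $u = \pi(S_s)$: when $\pi(S_s)$ sits near $u$, it takes of order $1/\phi(u)^2$ steps to halve, so the accumulated decay is controlled by $\int_{\pi(S_t)}^{\pi(S_0)} du/(u\,\phi(u)^2)$, up to the constant $4$ in \eqref{inteq2}. Choosing $t$ so that this integral exceeds the threshold forces $h_t \leq \epsilon$, and the duality transfers the bound to $q^t(x,y)/\pi(y)$.

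The main obstacle is the sharp one-step $L^2$-contraction: the laziness assumption is essential, since it is what guarantees that thresholding picks up a definite fraction of the boundary mass $|\partial S_t|$ rather than allowing cancellation in $\E[Z_{t+1}]$. Any slack in the constant here propagates multiplicatively through the integrated bound. Once the contraction is in place, the remaining step is a calculus comparison between a discrete decay and a continuous integral, with only mild care required near the midpoint $\pi(S_t) = 1/2$, where the profile $\phi$ is flattened by convention.
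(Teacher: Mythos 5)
The paper does not prove Theorem~\ref{hk2}; it imports it verbatim from Morris--Peres~\cite{hk}. Your sketch correctly reconstructs the evolving-set argument from that reference: the dual set-valued process started from $\{x\}$, the martingale $\pi(S_t)$, the one-step multiplicative contraction of $\sqrt{\pi(S_t)(1-\pi(S_t))}$ driven by laziness and the isoperimetric profile, and the time-change converting the iterated decay into an integral over scales. So in spirit you have the ``same'' proof as the one the paper is implicitly invoking.

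Two points need tightening before this would be a correct reconstruction. First, you have already divided by $\pi(y)$ in the definition of $Q(S,y)$, so the threshold rule should be $S_{t+1} = \{y : Q(S_t,y) \geq U_{t+1}\}$; as written you are effectively comparing $\sum_{z\in S_t}\pi(z)q(z,y)$ against $U_{t+1}\,\pi(y)^2$, which is not the evolving-set update. Second, and more substantively, the transition-density identity holds under the Doob-transformed (growth-biased) law $\widehat{\P}$, not the raw evolving-set law, and what it delivers is an $L^2(\pi)$ estimate $\bigl\Vert q^t(x,\cdot)/\pi - 1\bigr\Vert_{2,\pi} \leq \widehat{\E}_{\{x\}}\bigl[\sqrt{(1-\pi(S_t))/\pi(S_t)}\bigr]$, not a pointwise bound directly. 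Passing from there to the uniform estimate~\eqref{unif} requires a time-splitting Cauchy--Schwarz, $\bigl\vert q^{s+t}(x,y)/\pi(y) - 1\bigr\vert \leq \bigl\Vert q^s(x,\cdot)/\pi - 1\bigr\Vert_{2,\pi}\,\bigl\Vert q^t(\cdot,y)/\pi - 1\bigr\Vert_{2,\pi}$ together with reversibility (or a forward/backward evolving-set argument started from both $\{x\}$ and $\{y\}$). The factor of $4$ and the upper endpoint $4/\epsilon$ in~\eqref{inteq2} come precisely from this splitting, so if you skip it the constants in your reconstruction will not reconcile with the statement.
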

%
%
\begin{lemma} 
\label{hkbound}
Let $\{p(x,y\}$ be transition probabilities 
for the lazy random walk on $\gt_n$ and let $\pi$ be the stationary 
distribution. 
There is a universal constant $A > 0$ such that 
\begin{equation}
\l| p^t(x,y) - \pi(y) \r| \leq {A \over t},  
\end{equation}
for all $t \geq 1$.  
\end{lemma}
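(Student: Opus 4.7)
The plan is to apply Theorem \ref{hk2} to the lazy random walk on $\gt_n$, which reduces the problem to establishing a good lower bound on the isoperimetric profile $\phi$ of $\gt_n$ and then evaluating the integral in \eqref{inteq2}.

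First I would establish that $\gt_n$ satisfies essentially the same isoperimetric inequality as the full torus $G_n$, namely
\[
\phi(r) \;\geq\; \frac{c}{n\sqrt{r}} \qquad \text{for } \pi_* \leq r \leq \tfrac12,
\]
for a universal constant $c>0$. For the unmodified torus, a standard vertex/edge isoperimetric inequality says that any subset $S$ of $k \leq n^2/2$ vertices has edge boundary of size at least $c' \sqrt{k}$; converting to $\pi$-weighted notation (with $\pi(x)=1/(n^2-1)$ and $q(x,y)=1/8$ on each edge in the lazy chain) yields $|\partial S|/\pi(S) \geq c/\sqrt{k/n^2} \cdot (1/n) = c/(n\sqrt{r})$. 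Since $\gt_n$ is obtained from $G_n$ by removing one vertex and adding two edges, the edge boundary of any set can change by at most an $O(1)$ additive amount and the measure $\pi$ differs by a factor $n^2/(n^2-1)$, so the same bound holds on $\gt_n$ after adjusting constants.

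Next I would plug this into \eqref{inteq2}. Using $\phi^2(u) \geq c^2/(n^2 u)$, we get
\[
\int_{\pi_*}^{4/\epsilon} \frac{4\,du}{u\,\phi^2(u)} \;\leq\; \int_{\pi_*}^{4/\epsilon} \frac{4 n^2}{c^2}\,du \;\leq\; \frac{16 n^2}{c^2 \epsilon}.
\]
Thus Theorem \ref{hk2} applies whenever $t \geq 1 + 16 n^2/(c^2 \epsilon)$, i.e., for any $t \geq 2$ we may take $\epsilon = 16 n^2/(c^2(t-1))$ and conclude
\[
\left|\frac{p^t(x,y) - \pi(y)}{\pi(y)}\right| \;\leq\; \frac{16 n^2}{c^2(t-1)}.
\]
Multiplying by $\pi(y) = 1/(n^2-1)$ and adjusting constants yields the desired bound $|p^t(x,y) - \pi(y)| \leq A/t$ for all $t \geq 1$ (for small $t$ the bound is trivial by taking $A$ large enough that $A/t \geq 1$).

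The main obstacle I expect is the isoperimetric lower bound on $\gt_n$. The ambient torus enjoys a clean vertex-isoperimetric inequality, but $\gt_n$ has a deleted vertex and two nonstandard edges near the origin, so one must argue that cutting out a single vertex and rewiring a bounded number of edges cannot degrade the isoperimetric profile by more than an $O(1)$ factor. This is intuitively clear but requires a short direct verification: given $S \subset \gt_n$, let $S' \subset G_n$ be $S$ (or $S \cup \{(0,0)\}$), apply the torus isoperimetric inequality to $S'$, and account for the at most two boundary edges that the modification could have eliminated. Once that bookkeeping is done, the rest of the proof is essentially plug-and-play with Theorem \ref{hk2}.
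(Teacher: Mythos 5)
Your proposal is correct and follows essentially the same route as the paper: transfer the isoperimetric bound $\phi(u)\geq c/(n\sqrt u)$ from $G_n$ to $\gt_n$ by an $O(1)$ perturbation argument, then apply Theorem \ref{hk2} and evaluate the integral. The only cosmetic difference is that the paper substitutes $\epsilon=\alpha/\pi(y)$ so the integral's upper limit becomes $4\pi(y)/\alpha$ and the conclusion is already an absolute bound, whereas you keep $\epsilon$ as the relative error and multiply by $\pi(y)\approx n^{-2}$ at the end; the two bookkeepings are algebraically equivalent.
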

\begin{proof}
Recall that $G_n$ denotes the $n \times n$ torus $\Z_n^2$. 
We write $\phi$ (respectively, $\phit$) 
for the conductance profile for the lazy random 
walk on $G_n$ (respectively, $\gt_n$). It is well known that 
$\phi$ satisfies
\begin{equation}
\label{isot}
\phi(u) \geq {C \over n \sqrt{u}},
\end{equation} 
for a universal constant $C > 0$.

Let $\vt_n$ be the vertex set of $\gt_n$. 
Since for $S \subset \vt_n$, the boundary size and stationary probability 
of $S$, with respect to random walk on $\gt_n$, 
are within constant factors of the corresponding quantities 
with respect to 
random walk on $G_n$, it follows that the conductance 
profile $\phit$ for random walk on $\gt_n$ satisfies the 
similar inequality
\begin{equation}
\label{iso}
\phit(u) \geq {\ct \over n \sqrt{u}},
\end{equation} 
for a universal constant $\ct > 0$.

Fix $0 < \alpha < 1$. 
Using Theorem \ref{hk2} with $\epsilon = \alpha/\pi(y)$ gives 
\begin{equation}
\lab{hkb}
\l| p^t(x, y) - \pi(y) \r| \leq \alpha
\end{equation}
whenever
\begin{equation}
\label{inti}
t \geq 1 + \int_{\pi_*}^{4\pi(y)/\alpha} {4 du \over u \phit^2(u)} .
\end{equation}
Equation (\ref{iso}) implies that
the righthand side of  (\ref{inti}) is at most
\begin{eqnarray*}
1 + \int_{\pi_*}^{4\pi(y)/\alpha} 4C^{-2} n^2  \, du &\leq&
1 + {16 \pi(y) n^2 \over C^2 \alpha } \\
 &\leq& {A  \over  \alpha},
\end{eqnarray*}
for a universal constant $A > 0$, where the last line follows from 
the fact that $\pi(y)$ is $\bigo(n^{-2})$. 
It follows that that 
\begin{equation}
\l| p^t(x,y) - \pi(y) \r| \leq  {A \over t},  
\end{equation}
for all $t \geq 1$, and the proof is complete.
\end{proof}

\section{Appendix B}

\begin{lemma}
\label{randomwalk}
Let $W_t = (X_t, Y_t)$ be a simple random walk on $\z^2$,
started at $(0,1)$.
Fix a positive integer $k$ and let
$A, B$ and $C$ be the lines $y = 0$, $y = k$ and 
$|x| = k$, respectively. Let $\tb$ and $\tc$ be the hitting 
times of $A \cup B$ and $A \cup C$, respectively.
\begin{description}
\item{{\bf (i)}} $\,$ 
\[
\P( W_{T_B} \in B)  = {1 \over n}.
\]
\item{{\bf (ii)}}  $\,$
\[
\P( W_{T_C} \in C) \leq {2 \over n}.
\]
\end{description}
\end{lemma}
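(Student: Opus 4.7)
The plan is to prove the two parts separately.

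For part (i), observe that $T_B$ is a function only of the $Y$-coordinate of the walk, since both $A$ and $B$ are horizontal lines. The process $(Y_t)$ is itself a lazy simple random walk on $\z$ with holding probability $\half$, and the standard gambler's-ruin formula on $\{0,1,\dots,k\}$ started from $1$ immediately gives $\P(Y_{T_B} = k) = 1/k$.

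For part (ii), I introduce the box-exit time $T' = \inf\{t : |X_t| = k \text{ or } Y_t \in \{0,k\}\}$, which always satisfies $T' \le T_C$ and equals $T_C$ exactly when $Y_{T'} \neq k$. On the event $\{W_{T_C} \in C\}$, one of two things must have occurred at time $T'$: either the walk had already exited through the vertical sides (in which case $T' = T_C$ and $|X_{T'}| = k$), or it had first exited through the top $\{y = k\}$ and only later descended to hit $C$. Hence
\[
\P(W_{T_C} \in C) \;\le\; \P\bigl(|X_{T'}| = k\bigr) + \P\bigl(Y_{T'} = k\bigr),
\]
and the second term is at most $\P(Y \text{ hits } k \text{ before } 0) = 1/k$ by part (i).

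For the first term I would use a second-moment estimate. Since $T' \le \inf\{t : Y_t \in \{0,k\}\}$ and the latter is the exit time of the lazy $Y$-walk on $\{0,\dots,k\}$ started at $1$, the martingale $Y_t^2 - t/2$ gives $\E[T'] \le 2(k-1)$. The process $X_t^2 - t/2$ is likewise a martingale (each step of $X_t$ has variance $\half$), and optional stopping at $T'$ (justified by the finite expectation of $T'$ and the boundedness of the $X$-increments on $[0,T']$) yields $\E[X_{T'}^2] = \E[T']/2 \le k-1$. Since $X_{T'}^2 = k^2$ on $\{|X_{T'}| = k\}$, this forces $\P(|X_{T'}| = k) \le (k-1)/k^2 \le 1/k$, and combining the two bounds gives $\P(W_{T_C} \in C) \le 2/k$. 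The main obstacle is spotting the second-moment step: a direct application of the first-moment martingale $Y_t$ only gives $\E[Y_{T_C}\,\one_{W_{T_C}\in C}] = 1$, which does not by itself produce an upper bound on $\P(W_{T_C}\in C)$, since $Y_{T_C}$ may equal $1$ on that event.
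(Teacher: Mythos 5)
Your proof is correct, and for part (ii) it takes a genuinely different route from the paper's. Both proofs work with the same stopping time (your $T'$ is the paper's $T=\min(T_B,T_C)$) and both ultimately rely on optional stopping applied to second-moment martingales, but the decompositions differ. You split the target event into $\{|X_{T'}|=k\}\cup\{Y_{T'}=k\}$ and bound each piece separately, passing through the expected exit time $\E(T')\le 2(k-1)$ via the martingale $Y_t^2-t/2$, and then using $X_t^2-t/2$ to get $\E(X_{T'}^2)=\E(T')/2$. The paper instead observes that $Y_t^2-X_t^2$ is itself a martingale, so that $\E(X_T^2)=\E(Y_T^2)-1$ with no reference to $\E(T)$ at all, then bounds $\E(Y_T^2)\le \E(Y_{T_B}^2)=k$ by the submartingale optional stopping theorem, and finishes with a single Markov bound $\P(X_T^2+Y_T^2\ge k^2)\le 2/k$ on the set $\{W_T\in B\cup C\}$. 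The paper's version is marginally slicker in that it sidesteps the exit-time computation and uses one Markov inequality instead of a union bound; your version is arguably more transparent in isolating where the factor $1/k$ comes from for each boundary piece. Both give the identical constant $2/k$, and your closing remark correctly identifies why the first-moment martingale $Y_t$ alone cannot produce the bound.
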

\begin{proof}
\noindent{{\bf (i)}} $\;$ This is immediate by the optional stopping theorem 
because $Y_t$ is a bounded martingale and $T_B$ is 
a stopping time.

\noindent{{\bf (ii)}} $\;$  
Let $T = \min( \tb, \tc)$. 
Note that $T_C$ and $T$ 
are stopping times. 
A routine calculation shows that $Y_t^2 - X_t^2$ is a 
martingale. It follows that 
$Y_{\tmt}^2 - X_{\tmt}^2$ is a bounded submartingale. Thus the optional 
stopping theorem implies that
\[
\e \left( Y_T^2 - X_T^2 \right) = \e \left( Y_0^2 - X_0^2 \right) = 1,
\]
and hence
\begin{equation}
\label{star}
\e(X_T^2) < \e(Y_T^2).
\end{equation}
But since $Y_{\tmt}^2$ is a bounded submartingale and $T \leq T_B$,
we have 
\begin{eqnarray*}
\e \left( Y_T^2 \right) &\leq& \e \left( Y_{\tb}^2 \right) \\
&=& k^2 \P( Y_{\tb} = k ) \\
&=& k,
\end{eqnarray*}
where the last line holds because 
$\P( Y_{\tb} = k ) = {1 \over k}$ by part (i) of the lemma. 
Combining this with (\ref{star}) gives
\begin{equation}
\label{eboundy}
\e \left( X_T^2 + Y_T^2 \right) < 2k.
\end{equation}
It follows that
\begin{eqnarray*}
\P( W_T \in B \cup C) &=& \P( X_T^2 + Y_T^2 \geq k^2 ) \\
&\leq& {1 \over k^2} \e( X_T^2 + Y_T^2 ) \\
&\leq& {2 \over k },
\end{eqnarray*}
where first inequality is Markov's and the second follows 
from (\ref{eboundy}).
This verifies (ii) because $W_T \in B \cup C$ whenever $W_T \in C$. 
\end{proof}

{\bf Acknowledgments.} We are  grateful to 
I.~Benjamini for bringing the problem of the mixing time of the 
fifteen puzzle to our attention.


\begin{thebibliography}{99}


\bibitem{diaconis} Diaconis, P. Group representations in probability 
and statistics. Institute of Mathematical Statistics, 1988.

\bibitem{ct} Diaconis, P.~ and Saloff-Coste, L.
Comparison techniques for random walk on finite groups. 
{\it Annals of Probability\/}~{\bf 21} (1993),
pp.2131--2156.

\bibitem{ls} Diaconis, P.~ and Saloff-Coste, L.
Logarithmic Sobolev inequalities for finite Markov chains.  
{\it Annals of Applied Probability\/}~{\bf 6}(3) (1996),
pp.695--750.

\bibitem{dsrep} Diaconis, P.~ and Saloff-Coste, L.
Random walks on finite groups: a survey of 
analytic techniques.
In Probability Measures on Groups and Related Structures 11
(Z.H.~Heyer, ed.) 44--75.


\bibitem{JS}
Jerrum, M. R. and Sinclair, A. J. (1989).
Approximating the permanent.
{\it SIAM Journal on Computing\/ } {\bf 18}, 1149--1178.

\bibitem{H} Hoeffding, W. (1963), Probability inequalities for sums of
bounded random variables. 
{\it Journal of the American Statistical Association\/}{\bf 58} (301),
pp.~13--30. 

\bibitem{js} Johnson, W.~and Story, W. (1879)
Notes on the ``15'' puzzle. {\it American Journal of Mathematics\/}
{\bf 2} (4), pp.397--404.

\bibitem{ly} Lee, T.Y.~and Yau, H.T. (1998).
Logarithmic Sobolev inequality for some models 
of random walks. {\it Annals of Probability\/} {\bf 26}, pp.1855--1873.

\bibitem{ex}
Morris, B. (2006).
The mixing time for simple exclusion.
{\it Annals of Applied Probability\/} {\bf 16}, pp.615--635.

\bibitem{hk}
Morris, B.~and Peres, Y. (2005). 
Evolving sets, mixing and heat kernel bounds.
{\it Probability Theory and Related Fields\/} {\bf 133}, pp.245--266.


\bibitem{wilson} Wilson, D. (2004) 
Mixing times of lozenge tiling and card shuffling Markov chains.
{\it Ann.~Appl.~Prob.} {\bf 14}, pp.~274--325. 

\bibitem{wil} Wilson, M. (1974).
Graph puzzles, homotopy, and the alternating group.
{\it Journal of Combinatorial Theory\/} Series B. {\bf 16},
pp.86--96.


\bibitem{yau} Yau, Horng-Tzer (1997). Logarithmic Sobolev inequality
for generalized simple exclusion processes. 
{\it Probability Theory and Related Fields\/} {\bf 109}, pp.507--538.


\end{thebibliography}
\end{document}